\newtheorem{theorem}{Theorem}[section]
\newtheorem{proposition}[theorem]{Proposition} 
\newtheorem{definition}[theorem]{Definition} 
\newtheorem{remark}[theorem]{Remark} 
\newtheorem{lemma}[theorem]{Lemma} 
\numberwithin{equation}{section}
\def\ls{\lesssim}
\def\p{\partial}
\def\ep{\epsilon}
\def\eps{\varepsilon}
\def\d{\mathrm{d}}
\def\no{\nonumber}
\def\R{\mathbb{R}}
\def\l{\left\langle}
\def\r{\right\rangle}
\def\J{\mathcal{J}}
\def\M{{\scriptscriptstyle M_0}}
\def\scrpt{\scriptscriptstyle\stackrel}
\newcounter{wronumber}\setcounter{wronumber}{1}
\newcommand{\abs}[1]{\left|#1\right|}
\newcommand{\nm}[1]{\left\|#1\right\|}
\newcommand{\hx}[1]{{H^ #1 _x}}
\newcommand{\skp}[2]{\left\langle #1,\, #2 \right\rangle}
\newcommand{\sskp}[2]{\left\langle\!\!\!\left\langle #1,\, #2 \right\rangle\!\!\!\right\rangle}
\newcommand{\sskm}[2]{\left\langle\!\!\!\left\langle #1,\, #2 \right\rangle\!\!\!\right\rangle_M}
\begin{document}
\title[SOH-NS Models and limit from SOK Models]
			{Coupled Self-Organized Hydrodynamics and Navier-Stokes models: local well-posedness and the limit from the Self-Organized Kinetic-fluid models}

\author[N. Jiang]{Ning Jiang}
\address[Ning Jiang]{\newline School of Mathematics and Statistics, Wuhan University, Wuhan, 430072, P. R. China}
\email{njiang@whu.edu.cn}

\author[Y.-L. Luo]{Yi-Long Luo}
\address[Yi-Long Luo]
		{\newline School of Mathematics and Statistics, Wuhan University, Wuhan, 430072, P. R. China}
\email{yl-luo@whu.edu.cn}

\author[T.-F. Zhang]{Teng-Fei Zhang}
\address[Teng-Fei Zhang]
        {\newline School of Mathematics and Physics, China University of Geosciences, Wuhan, 430074, P. R. China}
\email{zhtengf@mail2.sysu.edu.cn}
\thanks{$^\dag$ \today}

\maketitle

\begin{abstract}
  A coupled system of self-organized hydrodynamics and Navier-Stokes equations (SOH-NS), which models self-propelled particles in a viscous fluid, was recently derived by Degond et al. \cite{DMVY-2017-arXiv}, starting from a micro-macro particle system of Vicsek-Navier-Stokes model, through an intermediate step of a self-organized kinetic-kinetic model by multiple coarse-graining processes. We first transfer  SOH-NS into a non-singular system by stereographic projection, then  prove the local in time well-posedness of classical solutions by energy method. Furthermore, employing the Hilbert expansion approach, we justify the hydrodynamic limit from the self-organized kinetic-fluid model to macroscopic dynamics. This provides the first analytically rigorous justification of the modeling and asymptotic analysis in \cite{DMVY-2017-arXiv}.
\end{abstract}





\section{Introduction}
\label{sec:intro}

\subsection{Backgrounds}
Self-organized motion is ubiquitous in nature. It corresponds to the formation of large scale coherent structures that emerge from the many interactions between individuals without leaders. Well-known examples are bird flocks, fish schools or insect swarms. Furthermore, self-organization also takes place at the microscopic level, for example in bacterial suspensions and sperm dynamics. In these cases, the environment, typically a viscous fluid, plays a key role in the dynamics.

Recently, Degond \cite{DMVY-2017-arXiv} investigated the self-organized motion of self-propelled particles (which in the literature called ``swimmers") in a viscous Newtonian fluid. The main difficulty in studying these systems comes from the complex mechanical interplay between the swimmers and the fluid. Particularly, high nonlinear interactions occur between neighboring swimmers through the perturbations that their motions create in the surrounding fluid. While the density of the swimmers is high, these interactions are even more complicated. In \cite{DMVY-2017-arXiv}, by assuming the swimmers align their direction of motion, they adopt the Vicsek model \cite{VicsekModel-1995} for self-propelled particles undergoing local alignment to account for these swimmer-swimmer interactions in a phenomenological way. Then the Vicsek model is coupled with the Navier-Stokes  equations for the surrounding viscous fluid by taking into account the interactions between the swimmers and the fluid.

The main contribution of \cite{DMVY-2017-arXiv} is that they provide a coarse-grained description of the hybrid Vicsek-Navier-Stokes dynamics in the form of a fully macroscopic description in both the fluid and the swimmers, which is named ``Self-Organized Hydrodynamics Navier-Stokes" (SOH-NS) equations. In their previous works, the coarse-graining for the Vicsek model alone was the ``Self-Organized Hydrodynamics" (SOH) derived in \cite{DM-2008-M3AS}. The SOH model is a system of continuum equations for the density and mean velocity orientation of the swimmers. In \cite{BCC-2012-AML}, for the first time, a coupled model for the agents' continuum density and mean velocity orientation on the one hand and the fluid velocity and pressure on the other hand is derived. The derivations of both SOH and SOH-NS are based on the Generalized Collision Invariant concept introduced in \cite{DM-2008-M3AS}. This technique has already been successfully applied to a wide range of models inspired by the Vicsek model \cite{DFL-2015-ARMA,DFM-2017-M3AS,JXZ-2016-SIMA}.

The rigorous derivation of macroscopic dynamics establishes a clear link between the microscopic (particle system) and macroscopic (fluid-type equations) scales and, in particular, between the parameters of the two systems. Moreover, microscopic simulations tend to be very costly for large number of individuals while macroscopic simulations are much more cost-effective. However, usually it is hard to derive the macroscopic model directly from the particle system. In stead, the coarse-graining from particle dynamics to macroscopic dynamics is carried out with an intermediate step called the kinetic equation (or mean-field equation). We take the Vicsek model as the example to explain this limiting process. The first step is the coarse-graining from the Vicsek model (which is a particle system) to the ``Self-Organized Kinetic" (SOK) model (which is a kinetic equation). This process is called the mean-field limit. The second step is the coarse-graining from kinetic model to macroscopic model, i.e. from SOK to SOH. This step is in the same spirit of the so-called fluid limits from the Boltzmann equations, which has been a very active research field in the past three decades.

In some simple cases, it has been rigorously justified the mean field limit from the Vicsek model to SOK, see \cite{BCC-2012-AML}. The first convergence result from the SOK to SOH was provided by the authors of the current paper with Xiong in \cite{JXZ-2016-SIMA}, which provided a first rigorous justification of the formal analysis in \cite{DM-2008-M3AS}, based on the well-posedness results of SOH in \cite{DLMP-2013-MAA}.

In general, for the fluid limits from kinetic equations, there are two basic approaches. The first is the moment method, which starts from a sequence of global weak solutions of the scaled kinetic equations, then proves (usually weak) compactness of the several moments of the solutions of the kinetic equations, thus limits of the convergent subsequences are weak solutions to the macroscopic equations. The most famous example of this approach might be the so-called Bardos-Golse-Levermore's (BGL) program that justifies Leray solutions of the incompressible Navier-Stokes equations from renormalized solutions of the Boltzmann equations. This program is finally finished by Golse and Saint-Raymond \cite{G-SRM-2004-Invent}. We emphasize that in this approach, the well-posedness of the limiting macroscopic equations are not needed to be known a priori. It is an automatic consequence of the convergence. However, for the convergence from SOK to SOH, it is extremely difficult to employ this approach because at the current stage, the theory of global weak solutions of the SOK is far from well-understood, although there are some partial results on the homogeneous case \cite{FK-2017-ARMA}. For this reason, the convergence proof from SOK to SOH takes the following second approach, which was initialized by the classic work of Caflisch on the compressible Euler limit from the Boltzmann equation \cite{Cafli-1980-CPAM}.

Different with the moment method approach, the second, i.e. Caflisch's approach is looking for a class of special solutions of the scaled kinetic equations by the Hilbert expansion. It assumes the well-posedness of the (usually classical) solutions the limiting macroscopic equations is known, then construct the solutions of the kinetic equations around that of the limiting equations. Then the convergence is an automatic consequence. The key points of this approach are, first prove the well-posedness of the limiting macroscopic equations, then derive the uniform estimates for the remainder equations which usually are less singular and nonlinear than the original scaled kinetic equations. This approach was employed to justify incompressible Navier-Stokes limit from the Boltzmann equation, see \cite{Guo-2006-CPAM} and \cite{JX-2015-SIMA}. We also use it to justify the convergence from SOK to SOH \cite{JXZ-2016-SIMA}.

The goal of the current paper is to give a rigorous justification of the much harder convergence from the coupled SOK-NS system to the macroscopic SOH-NS system. For the reasons mentioned above, we first use energy method to prove the local-in-time well-posedness of the classical solutions of the SOH-NS equations, then employ Caflisch's Hilbert expansion approach to justify the convergence from SOK-NS to SOH-NS. We will introduce it in details in the following subsections.

\subsection{The SOH-NS model} 
\label{sub:the_soh_ns_model}


In \cite{DMVY-2017-arXiv}, it is formally derived the following coupled ``Self-Organized Hydrodynamics and Navier-Stokes'' (SOH-NS) model:
\begin{equation}\label{SOH-NS-General}
  \begin{cases}
  	\partial_t \rho + \nabla_x \cdot ( \rho U ) = 0 \,,
  \\[3pt]
    \rho \partial_t \Omega + \rho ( V \cdot \nabla_x ) \Omega + \tfrac{a}{\kappa} P_{\Omega^\bot} \nabla_x \rho = \gamma P_{\Omega^\bot} \Delta_x (\rho \Omega) + \rho P_{\Omega^\bot} \big{(} \widetilde{\lambda} S(v) + A(v) \big{)} \Omega \,,
  \\[3pt]
  	|\Omega| = 1 \,,
  \\[3pt]
    \partial_t \big{[} ( Re + \bar{c} \rho ) v + a c_1 \bar{c} \rho \Omega \big{]} + \nabla_x \cdot \big{[} ( Re + \bar{c} \rho ) v \otimes v + a c_1 \bar{c} \rho ( v \otimes \Omega + \Omega \otimes v ) \big{]}
  	\\[3pt]
  		\hspace*{4.2cm} + \nabla_x \cdot \big{[} ( a^2 \bar{c} + b ) \rho \mathcal{Q} (\Omega) \big{]} = - \nabla_x ( p + \tfrac{a^2 \bar{c} }{ 3 } \rho ) + \Delta_x v \,,
  \\[3pt]
      \nabla_x \cdot v = 0 \,,
  \end{cases}
\end{equation}
which governs the dynamics of density $\rho = \rho(t,x) : \mathbb{R}^+ \times \mathbb{R}^3 \rightarrow \R$, the mean motion direction $\Omega = \Omega(t,x) \in \mathbb{S}^{2}$, and the environmental fluid velocity $v = v(t,x) \in \mathbb{R}^3$. In addition, $p = p(t,x) \in \mathbb{R}$ denotes the pressure, and the symbols $\nabla_x$, $\nabla_x \cdot$ and $\Delta_x$ are gradient operator, divergence operator and Laplacian operator, respectively. Moreover, we denote that
	\begin{equation*}
	  U = a c_1 \Omega + v\,, \ V = a c_2 \Omega + v\,, \ \mathcal{Q} (\Omega) = c_4 \big{(} \Omega \otimes \Omega - \tfrac{1}{3} \mathrm{Id} \big{)}\,,
	\end{equation*}
and
	\begin{equation*}
	  (\nabla_x v)_{ij} = \partial_{x_i} v_j \equiv \partial_i v_j \,, \ S(v) = \tfrac{1}{2} ( \nabla_x v + \nabla_x v^\top ) \,, \ A(v) = \tfrac{1}{2} ( \nabla_x v - \nabla_x v^\top ) \,,
	\end{equation*}
and $P_{\Omega^\bot} = \mathrm{I} - \Omega \otimes \Omega$ denotes {\em the orthonormal projection operator} $P_{\Omega^\bot}$ onto the sphere $\mathbb{S}^{2}$ at $\Omega$.

For completeness, we mention here that the coefficients in this system satisfy
\begin{equation}\label{Coeffs-SOH_NS}
  \begin{aligned}
    \gamma =& k_0 \nu ( c_2 + \tfrac{2}{\kappa} ) \,, \ \widetilde{\lambda} = \lambda \lambda_0 \,, \ \lambda_0 = \tfrac{6}{\kappa} c_2 + c_3 - 1 \,, \ \kappa = \tfrac{\nu}{D}\,, \\
    k_0 =& \tfrac{R^2}{6} \int_{\R^n} K (|x|) |x|^2 \d x \bigg{(} \int_{\R^n} K (|x|) \d x \bigg{)}^{-1} \,, \\
     c_1 =& \tfrac{\int_0^\pi \cos \theta \exp (\kappa \cos \theta) \sin \theta \d \theta}{\int_0^\pi \exp (\kappa \cos \theta) \sin \theta \d \theta} \in [0,1]\,, \ c_2 = \tfrac{\int_0^\pi \cos \theta \sin^3 \theta \exp (\kappa \cos \theta) h (\cos \theta) \d \theta}{\int_0^\pi \sin^3 \theta \exp (\kappa \cos \theta) h (\cos \theta) \d \theta} \,, \\
     c_3 = & \tfrac{ 2 \int_0^\pi \cos^2 \theta \sin^3 \theta \exp (\kappa \cos \theta) h (\cos \theta) \d \theta}{\int_0^\pi \sin^3 \theta \exp (\kappa \cos \theta) h (\cos \theta) \d \theta} \,, \ c_4 = 1 - \tfrac{ 3 \int_0^\pi \sin^3 \theta \exp (\kappa \cos \theta) \d \theta}{ 2 \int_0^\pi \sin \theta \exp (\kappa \cos \theta) \d \theta } \,,
  \end{aligned}
\end{equation}
where $a$, $b$, $\nu$, $D$, $\lambda$ and $R$ are the known constants, $Re > 0$ is Reynolds number, $\bar{c}$ is an inertial constant, and $K = K(r) > 0$, $r \geq 0$, is a given sensing function, which weights the influence of the neighboring agents, and $h (\cdot)$ will be defined in Definition \ref{def:GCI} below.

As the inertial constant $\bar{c} \ll 1$ in physical experiment, we take the constant $\bar{c} = 0$ in this paper, and we concern a simple version of SOH-NS system shown in the following,
\begin{equation}\label{SOH-NS}
  \left\{
    \begin{array}{l}
      \partial_t \rho + \nabla_x \cdot ( \rho U ) = 0 \,, \\
      \rho \partial_t \Omega + \rho ( V \cdot \nabla_x ) \Omega + \tfrac{a}{\kappa} P_{\Omega^\bot} \nabla_x \rho = \gamma P_{\Omega^\bot} \Delta_x (\rho \Omega) + \rho P_{\Omega^\bot} \big{(} \widetilde{\lambda} S(v) + A(v) \big{)} \Omega \,, \\
      |\Omega| = 1 \,, \\
      Re \big{(} \partial_t v + v \cdot \nabla_x v \big{)} + \nabla_x p = \Delta_x v - b \nabla_x \cdot \big{(} \rho \mathcal{Q} (\Omega) \big{)} \,, \\
      \nabla_x \cdot v = 0 \,.
    \end{array}
  \right.
\end{equation}

For the SOH-NS system \eqref{SOH-NS}, the operator $P_{\Omega^\bot}$ on the right-hand side of the second equation ensures that the geometric constraint $|\Omega| = 1$ holds at all times (provided that $|\Omega|_{t=0} = 1$).  On the other hand, this operator makes the equation is not conservative, which means that the terms involving the spatial derivatives cannot be written as spatial divergence of a flux function.

To write the equation of $\Omega$ into coordinates, it seems that a natural choice is the spherical coordinates, i.e. $\Omega = ( \sin \theta \cos \varphi , \sin \theta \sin \varphi , \cos \theta )^\top$, which implies that the SOH-NS system \eqref{SOH-NS} can be written as
\begin{equation}\label{SOH-NS-3D-S-Simple}
    \left\{
      \begin{array}{c}
        \partial_t \widehat{\rho} + U \cdot \nabla_x \widehat{\rho} + a c_1 ( \Omega_\theta \cdot \nabla_x \theta + \Omega_\varphi \cdot \nabla_x \varphi ) = 0 \,, \\
        \partial_t \theta + V \cdot \nabla_x \theta + \mathcal{D}_\theta (\widehat{\rho}, \theta, \varphi, v) = 0 \,, \\
        \partial_t \varphi + V \cdot \nabla_x \varphi + \tfrac{1}{\sin^2 \theta} \mathcal{D}_\varphi (\widehat{\rho}, \theta, \varphi, v) = 0 \,, \\
        Re [ \partial_t  v   + v \cdot \nabla_x  v ] - \Delta_x v + \nabla_x p  + b  e^{\widehat{\rho}}  \mathcal{B} (\widehat{\rho}, \theta, \varphi) = 0 \,, \\
        \nabla_x \cdot v = 0 \,,
      \end{array}
    \right.
  \end{equation}
  where $\widehat{\rho} = \ln \rho$ and
   \begin{equation*}
     \begin{aligned}
       \mathcal{D}_\theta (\widehat{\rho}, \theta, \varphi, v) = & \tfrac{a}{\kappa} \Omega_\theta \cdot \nabla_x \widehat{\rho}  - \Big{(} \widetilde{\lambda} S(v) + A(v) \Big{)} : \Omega_\theta \otimes \Omega \\
       & - 2 \gamma \nabla_x \widehat{\rho} \cdot \nabla_x \theta - \gamma ( \Delta_x \theta - \sin \theta \cos \theta | \nabla_x \varphi |^2 ) \,, \\
       \mathcal{D}_\varphi (\widehat{\rho}, \theta, \varphi, v) = & \tfrac{a}{\kappa } \Omega_\varphi \cdot \nabla_x \widehat{\rho}  -  \Big{(} \widetilde{\lambda} S(v) + A(v) \Big{)} : \Omega_\varphi \otimes \Omega \\
       & - 2 \gamma \sin^2 \theta  \nabla_x \widehat{\rho} \cdot \nabla_x \varphi - \gamma ( \sin^2 \theta  \Delta_x \varphi + \sin \theta \cos \theta  \nabla_x \theta \cdot \nabla_x \varphi ) \,, \\
       \mathcal{B} (\widehat{\rho}, \theta, \varphi) = & \nabla_x \widehat{\rho} \cdot \mathcal{Q} (\Omega) + c_4 [ \Omega \cdot \nabla_x \theta \Omega_\theta + \Omega \cdot \nabla_x \varphi \Omega_\varphi + ( \Omega_\theta \cdot \nabla_x \theta + \Omega_\varphi \cdot \nabla_x \varphi ) \Omega ] \,,
     \end{aligned}
   \end{equation*}
  and the vectors $\Omega_\theta$ and $\Omega_\varphi$ are the partial derivative of $\Omega$ with respect to the variables $\theta$ and $\varphi$, which, explicitly, is
  $$ \Omega_\theta = ( \cos \theta \cos \varphi , \cos \theta \sin \varphi , - \sin \theta)^\top \,, \ \Omega_\varphi = ( - \sin \theta \sin \varphi , \sin \theta \cos \varphi , 0 )^\top \,. $$
 Here we omit the details of the derivation. Actually, one can refer to an analogous derivation in \cite{DLMP-2013-MAA} (see also in \cite{JZ-2017-NA}) for the SOH models in three dimensions. However, the system \eqref{SOH-NS-3D-S-Simple} has a coefficient $\frac{1}{\sin^2 \theta}$ which is singular near $\theta = 0$. This will bring some serious difficulties in analytic study. In fact, this degeneracy results from the orthonormal projection $P_{\Omega^\bot}$ under the spherical coordinates transform, specifically, $P_{\Omega^\bot} a = ( \Omega_\theta \cdot a ) \Omega_\theta + \frac{(\Omega_\varphi \cdot a)}{ \sin^2 \theta } \Omega_\varphi $ for all $a \in \R^3$.

In order to avoid this degeneracy, we adopt {\em stereographic projection transform} to deal with the geometric constraint $|\Omega| = 1$. Let $\widehat{\rho} = \ln \rho$, and $\Omega = \big{(} \tfrac{2 \phi}{W} , \tfrac{2 \psi}{W} , \tfrac{\phi^2 + \psi^2 - 1}{W} \big{)}$, where $W = 1 + \phi^2 + \psi^2$ and $(\phi, \psi) \in \R^2$, then the SOH-NS system \eqref{SOH-NS} reads
  \begin{equation}\label{SOH-NS-3D-SPT-Smp}
    \left\{
      \begin{array}{c}
        \partial_t \widehat{\rho} + U \cdot \nabla_x \widehat{\rho} + a c_1 ( \Omega_\phi \cdot \nabla_x \phi + \Omega_\psi \cdot \nabla_x \psi ) = 0 \,, \\
        \partial_t \phi + V \cdot \nabla_x \phi + \mathcal{H}_\phi ( \widehat{\rho}, \phi, \psi ) = 0 \,, \\
        \partial_t \psi + V \cdot \nabla_x \psi + \mathcal{H}_\psi ( \widehat{\rho}, \phi, \psi ) = 0 \,, \\
        Re ( \partial_t  v + v \cdot \nabla_x  v ) - \Delta_x v + \nabla_x  p   + b  e^{\widehat{\rho}} \mathcal{G} (\widehat{\rho}, \phi, \psi) = 0 \,, \\
        \nabla_x \cdot v = 0 \,.
      \end{array}
    \right.
  \end{equation}
  where the symbols $\mathcal{H}_\phi$, $\mathcal{H}_\psi$, $\mathcal{P}$ and $\mathcal{G}$ stand for
  \begin{equation}\label{Terms-Def}
    \begin{aligned}
      \mathcal{H}_\phi (\widehat{\rho}, \phi, \psi, v) = & \tfrac{a}{4 \kappa} W^2 \Omega_\phi \cdot \nabla_x \widehat{\rho} - \gamma \Delta_x \phi - 2 \gamma \nabla_x \widehat{\rho} \cdot \nabla_x \phi  + \tfrac{2 \gamma \phi}{W} |\nabla_x \phi|^2 \\
      & + \tfrac{ 4 \gamma \psi }{ W } \nabla_x \phi \cdot \nabla_x \psi - \tfrac{2 \gamma \phi}{W} |\nabla_x \psi|^2 - \tfrac{1}{4} W^2 (\widetilde{\lambda} S(v) + A(v) ) : \Omega_\phi \otimes \Omega \,, \\
      \mathcal{H}_\psi ( \widehat{\rho}, \phi, \psi, v ) = & \tfrac{a}{4 \kappa} W^2 \Omega_\psi \cdot \nabla_x \widehat{\rho} - \gamma \Delta_x \psi - 2 \gamma \nabla_x \widehat{\rho} \cdot \nabla_x \psi  - \tfrac{2 \gamma \psi}{W} |\nabla_x \phi|^2 \\
      & + \tfrac{ 4 \gamma \phi }{W} \nabla_x \phi \cdot \nabla_x \psi + \tfrac{2 \gamma \psi}{W} |\nabla_x \psi|^2 - \tfrac{1}{4} W^2 (\widetilde{\lambda} S(v) + A(v) ) : \Omega_\phi \otimes \Omega \,, \,, \\
       \mathcal{G} ( \widehat{\rho}, \phi, \psi ) = & \nabla_x \widehat{\rho} \cdot \mathcal{Q}(\Omega) + c_4 [ (\Omega \cdot \nabla_x \phi) \Omega_\phi + (\Omega \cdot \nabla_x \psi) \Omega_\psi ] \\
       & + c_4 ( \Omega_\phi \cdot \nabla_x \phi + \Omega_\psi \cdot \nabla_x \psi ) \Omega \,,
    \end{aligned}
  \end{equation}
and the vectors $\Omega_\phi$ and $\Omega_\psi$ are the partial derivative of $\Omega$ with respect to the variables $\phi$ and $\psi$, which, explicitly, is
  $$
    \Omega_\phi = \big{(} \tfrac{2 (1 - \phi^2 + \psi^2)}{W^2} , - \tfrac{4 \phi \psi}{W^2} , \tfrac{4 \phi}{W^2} \big{)} \ \textrm{and} \ \Omega_\psi = \big{(} - \tfrac{4 \phi \psi}{W^2} , \tfrac{2 (1 + \phi^2 - \psi^2)}{W^2} , \tfrac{4 \psi}{W^2} \big{)} \,,
  $$
respectively. The details of the derivations will be given in Appendix \ref{sec:SPT-3D}.

The main theorem of local well-posedness of SOH-NS system \eqref{SOH-NS} is stated as follows:
\begin{theorem}[Local well-posedness] \label{Thm-WP-SOHNS-3D}

Assume $ \gamma > 0$. If the initial conditions of the SOH-NS system \eqref{SOH-NS} are given by
  \begin{equation}\label{Initial-3D}
    \begin{aligned}
      ( \rho, \Omega, v ) \big{|}_{t=0} = \Big{(} \rho^{in}(x), \Omega^{in}(x) = (\tfrac{2 \phi^{in}(x)}{W^{in}(x)} , \tfrac{2 \psi^{in}(x)}{W^{in}(x)} , 1 - \tfrac{2}{W^{in}(x)} )^\top , v^{in}(x) \Big{)} \in \R \times \mathbb{S}^2 \times \R^3 \,,
    \end{aligned}
  \end{equation}
where $W^{in}(x) = 1 + ( \phi^{in}(x) )^2 + ( \psi^{in}(x) )^2$, $0 < \rho^{in}(x) \leq \bar{\rho} < \infty$, $(\phi^{in}(x), \psi^{in}(x) ) \in \R^2$ and $\ln \rho^{in}, \phi^{in}, \psi^{in}, v^{in} \in H^s(\R^3)$ for $s \geq 3$. Then there is a time $T > 0$, depending only on the initial data $\rho^{in}$, $\Omega^{in}$, $v^{in}$ and all of coefficients of the system \eqref{SOH-NS}, such that the SOH-NS system \eqref{SOH-NS} admits a unique solution $\Big{(} \rho, \Omega = ( \tfrac{2 \phi}{W}, \tfrac{2 \psi}{W} , 1 - \tfrac{2}{W} )^\top , v \Big{)}$ satisfying
  $$
    \ln \rho \in L^\infty ( 0,T; H^s(\R^3) ), \, \phi, \psi, v \in L^\infty(0,T;H^s(\R^3)) \cap L^2(0,T; H^{s+1}(\R^3))\,,
  $$
where $W = 1 + \phi^2 + \psi^2$. Moreover, the the following energy bound
  \begin{equation}\label{3D-Eng-Bnd}
    \begin{aligned}
      \sup_{t \in [0,T]} \Big{(} & \| \ln \rho \|^2_{H^s(\R^3)} + \| \phi \|^2_{H^s(\R^3)} + \| \psi \|^2_{H^s(\R^3)} + Re \| v \|^2_{H^s(\R^3)} \Big{)} \\
      & + \gamma \| \nabla_x \phi \|^2_{L^2(0,T; H^s(\R^3))} + \gamma \| \nabla_x \psi \|^2_{L^2(0,T; H^s(\R^3))} + \| \nabla_x v \|^2_{L^2(0,T; H^s(\R^3))} \leq C
    \end{aligned}
  \end{equation}
holds for some positive constant $C$, which depends only on the initial data and all of coefficients of the system \eqref{SOH-NS}.
\end{theorem}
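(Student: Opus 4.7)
The plan is to work with the non-singular reformulation \eqref{SOH-NS-3D-SPT-Smp} obtained by stereographic projection, and to establish the energy bound \eqref{3D-Eng-Bnd} through a linearized iteration followed by passage to the limit. The first step is to construct approximate solutions $(\widehat\rho^{n+1},\phi^{n+1},\psi^{n+1},v^{n+1})$ by freezing every nonlinearity at the previous iterate $n$: this yields a linear transport equation for $\widehat\rho^{n+1}$, two linear parabolic equations for $\phi^{n+1},\psi^{n+1}$, and a linear Stokes equation for $v^{n+1}$, each solvable in $L^\infty_t H^s_x$ (with the parabolic pieces also in $L^2_t H^{s+1}_x$) by classical theory whenever the previous iterate lies in the same function space.

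The core of the argument is a closed a priori estimate for the norm
$E^s(t)=\|\widehat\rho\|_{H^s}^2+\|\phi\|_{H^s}^2+\|\psi\|_{H^s}^2+Re\|v\|_{H^s}^2$
with dissipation
$D^s(t)=\gamma(\|\nabla_x\phi\|_{H^s}^2+\|\nabla_x\psi\|_{H^s}^2)+\|\nabla_x v\|_{H^s}^2$.
Apply $\partial^\alpha$ for $|\alpha|\leq s$ to each equation and pair with $\partial^\alpha$ of the corresponding unknown. The transport equation for $\widehat\rho$ contributes $\tfrac{1}{2}\tfrac{d}{dt}\|\partial^\alpha\widehat\rho\|_{L^2}^2$ plus a commutator $[\partial^\alpha,U\cdot\nabla_x]\widehat\rho$ controlled by the Moser/Kato--Ponce inequality, using $H^s\hookrightarrow W^{1,\infty}$ for $s\geq 3$; the $\phi,\psi$ equations supply the coercive dissipation; and the Navier--Stokes piece is handled by the Leray projection to remove the pressure. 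The many composition terms (involving $W$, $\Omega_\phi$, $\Omega_\psi$, and the factor $e^{\widehat\rho}$ in the fluid equation) are tame in $H^s$ because the bound $W\geq 1$ makes $W^{-1}$ smooth and $H^s\hookrightarrow L^\infty$ controls all composed functions.

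The main technical obstacle is the top-order coupling between the hyperbolic variable $\widehat\rho$ and the parabolic variables $(\phi,\psi)$: the $\widehat\rho$-equation carries $\nabla_x\phi,\nabla_x\psi$ at first order, while the $\phi,\psi$-equations carry $\nabla_x\widehat\rho$ through the terms $\tfrac{a}{4\kappa}W^2\Omega_{\phi,\psi}\cdot\nabla_x\widehat\rho$. These cross terms admit no hyperbolic-style cancellation. The key idea is to use the parabolic dissipation of $(\phi,\psi)$ to absorb the coupling: by Cauchy--Schwarz,
\begin{equation*}
\Big|\int_{\R^3} a c_1 (\Omega_\phi\cdot\nabla_x\partial^\alpha\phi)\,\partial^\alpha\widehat\rho\,\d x\Big| \leq \tfrac{\gamma}{4}\|\nabla_x\partial^\alpha\phi\|_{L^2}^2+C\|\partial^\alpha\widehat\rho\|_{L^2}^2,
\end{equation*}
while the mirror coupling in the $\phi$-equation is first moved onto $\partial^\alpha\widehat\rho$ by integration by parts and then absorbed in the same way. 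This parabolic-dominated closing is the decisive step and is in the same spirit as the handling of the density in the local theory for compressible Navier--Stokes systems.

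Summing these estimates yields an inequality of the form $\tfrac{d}{dt}E^s+\tfrac{1}{2}D^s\leq C P(E^s)$ for a polynomial $P$, which closes on a short time $[0,T]$ depending only on the initial data and the coefficients, proving \eqref{3D-Eng-Bnd} uniformly in $n$. Strong convergence of the iterates in the lower norm $L^\infty_t H^{s-1}_x$ follows from the analogous energy estimate applied to the differences $(\widehat\rho^{n+1}-\widehat\rho^n,\ldots)$, whose limit solves \eqref{SOH-NS-3D-SPT-Smp}; uniqueness is obtained by the same difference argument applied to two candidate solutions. The positivity $\rho>0$ and the geometric constraint $|\Omega|=1$ are preserved automatically by the parameterizations $\rho=e^{\widehat\rho}$ and $\Omega=(2\phi/W,\,2\psi/W,\,(\phi^2+\psi^2-1)/W)$, completing the argument.
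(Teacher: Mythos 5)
Your a priori estimate is the same as the paper's: transform by stereographic projection, differentiate $s$ times, pair, and use Young's inequality to let the parabolic dissipation $\gamma\|\nabla_x\phi\|_{H^s}^2+\gamma\|\nabla_x\psi\|_{H^s}^2$ absorb the first-order cross-couplings between $\widehat\rho$ and $(\phi,\psi)$ (the paper's terms $D_3+D_4$ on the $\widehat\rho$ side and $E_{61}$ on the $\phi$ side, the latter after an integration by parts to shift the extra derivative from $\widehat\rho$ to $\phi$ --- exactly as you describe). The essential difference is in the construction of approximate solutions: you propose a Picard iteration with nonlinearities frozen at the previous iterate, while the paper instead mollifies the \emph{whole} nonlinear system with a Fourier cut-off $\J_\ep$, solves the resulting ODE in $H^s$ by Cauchy--Lipschitz, and uses $\J_\ep^2=\J_\ep$ to make the mollified solution satisfy the same energy identity as the limit system. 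The mollifier route avoids the book-keeping your iteration requires (in your scheme the $\widehat\rho^{n+1}$-equation draws on $D^s(\phi^n,\psi^n)$ while the $\phi^{n+1}$-equation produces $D^s(\phi^{n+1})$, so the closure is genuinely $n$-staggered and must be handled with care over two consecutive iterates); your scheme in turn makes the linear solvability at each step transparent and is probably the more familiar template. Both are standard and both work here.

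One imprecision worth flagging: you write that the closed estimate has the form $\tfrac{d}{dt}E^s+\tfrac12 D^s\le CP(E^s)$ with $P$ a polynomial. That cannot be literally true, because the forcing $b\,e^{\widehat\rho}\mathcal{G}(\widehat\rho,\phi,\psi)$ in the fluid equation introduces $\|e^{\widehat\rho}\|_{L^\infty}$, which the embedding $H^s\hookrightarrow L^\infty$ controls only exponentially in $\|\widehat\rho\|_{H^s}$. The paper's Proposition \ref{Prop-Apr-Est-3D} makes this explicit --- the right-hand side of \eqref{Aprori-Est-SOHNS-3D-SPT-Simp} carries a factor $\exp(C\int_0^t\mathcal{E}^{1/2}\,\d\tau)$, obtained via the characteristics argument of Lemma \ref{Lm-rho-L^infty}. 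This changes nothing about short-time closure, but it does affect how $T$ depends on the data and should be stated honestly; simply replacing ``polynomial'' by ``locally bounded function of $E^s$ and $\int_0^t (E^s)^{1/2}$'' fixes it.
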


\subsection{The hydrodynamic limit from the SOK-NS model} 
\label{sub:the_hydrodynamic_limit_from_the_sok_ns_model}


Based on the above local existence result of the SOH-NS system \eqref{SOH-NS}, we turn to describe the second part of this paper, i.e. the hydrodynamic limits from the ``self-organized kinetic equation and Navier-Stokes'' (SOK-NS) coupling system to the above SOH-NS system \eqref{SOH-NS} in $\R^3$. Let us first briefly introduce that SOK-NS can be formally derived from the following coupled Vicsek-Navier-Stokes (Vicsek-NS) model for particle system.

Let $X_i(t)\in\mathbb{R}^3\ (i \in \{ 1,2,\cdots, N \})$ and $\omega_i(t)\in \mathbb{S}^2$ be the position and its direction of motion of the $i\mbox{-}$th particle at time $t$, where the large number $N$ denotes the total number of particles contained in the system. Besides, denote by $v(t,x)\in \mathbb{R}^3$ the environmental fluid velocity. Thus the coupling Vicsek-Navier-Stokes dynamics can be expressed in the following system:
  \begin{align} \label{eq:particle-model}
    \begin{cases}
      \frac{\d X_i}{\d t} = u_i, \\[3pt]
      \frac{\d u_i}{\d t}= \eta (v(X_i,t) + a \omega_i -u_i),
    \\[3pt]
      \d\omega_i = P_{\omega_i^\perp} \circ [\nu \overline{\omega_i}\d t + ( \lambda S(v)+A(v) )\omega_i \d t]+ \sqrt{2D}\d B_t^i,
    \\[3pt] \displaystyle
      \bar\omega_i = \frac{J_i}{|J_i|} \text{ \quad with } J_i = \sum_{k=1}^N K \left( \tfrac{|X_i-X_{k}|}{R} \right) \omega_k,
    \\[3pt]
      \frac{\d u_i}{\d t} = F_i,
    \\[3pt] \displaystyle
      Re(\partial_{t}v + (v\cdot\nabla_{x}) v)  + \nabla_{x} p
      = \Delta_{x} v - c \sum_{i=1}^{N}F_i\delta_{X_i(t)} \\[5pt] \displaystyle
        \hspace*{5.5cm} - b  \frac{1}{N} \sum_{i=1}^N \left( \omega_i \otimes \omega_i - \tfrac{1}{3} {\rm Id} \right)  \nabla_x\delta_{X_i(t)},
    \\[2pt]
      \nabla_{x} \cdot v = 0,
    \end{cases}
  \end{align}
where $p(t,x)$ is the pressure, and $b$ is constant. $R$ stands for the non-dimensional variable of interaction range. It should be pointed out that the third equation in \eqref{eq:particle-model} is actually a stochastic differential equation, and the symbol $\circ$ implies that the SDE is taken in the Stratonovich sense.

By using of the large friction limit regime as stated in \cite{DMVY-2017-arXiv}, the mean-field equation at finite Reynolds and finite particle inertia can be obtained. The mean-field model governs the evolution of fluid velocity $v(t,x)$ and the one-particle distribution function $f(t,x,\omega)$, which represents the distribution of the microscopic molecules at the spatial position $x \in \R^3$ with the motion direction $\omega \in \mathbb{S}^2$ at time $t \geq 0$. This is referred in \cite{DMVY-2017-arXiv} as the ``self-organized kinetic equation and Navier-Stokes'' (SOK-NS) coupling system, and we omit here the derivation. We now turn to consider the scaled system for SOK-NS model with respect to a rescaling parameter $\eps$:
	\begin{equation}\label{eq:SOK-NS}
	\begin{cases}
	  \eps \big[ \partial_t f^\eps + \nabla_x \cdot (u^\eps f^\eps)  + \nabla_\omega \cdot ( \mathcal{F}_{(f^\eps, v^\eps)} f^\eps ) \big] = \mathcal{Q}(f^\eps) \,,
	\\[3pt]
	  u^\eps(t, x, \omega) = v^\eps(t,x) + a \omega \,,
	\\[3pt]
	  Re ( \partial_t v^\eps + v^\eps \cdot \nabla_x v^\eps ) + b \nabla_x \cdot Q_{f^\eps} = - \nabla_x p^\eps + \Delta_x v^\eps \,,
	\\[3pt]
	  \nabla_x \cdot v^\eps = 0 \,,
	\end{cases}
	\end{equation}
with
\begin{equation*}
  \begin{array}{l}
    \mathcal{Q}(f^\eps) = - \nabla_\omega \cdot [ \nu P_{\omega^\bot} \Omega_{f^\eps} f^\eps ] + D \Delta_\omega f^\eps \,,
  \\[3pt]
    Q_{f^\eps}(t,x) = \int_{\mathbb{S}^2} \big( \omega \otimes \omega - \tfrac{1}{3} \mathrm{Id} \big) f^\eps(t,x,\omega) \d \omega \,,
  \\[3pt]
    \mathcal{F}_{(f^\eps,v^\eps)} = P_{\omega^\bot} \left[ \nu \tfrac{k_0}{|j_{f^\eps}|} P_{\Omega_{f^\eps}^\bot} \Delta_x j_{f^\eps} + ( \lambda S(v^\eps) + A(v^\eps) ) \omega \right] \,,
  \end{array}
\end{equation*}
where $j_f(t,x) = \int_{\mathbb{S}^2} \omega f(t,x,\omega) \d \omega$ is the local current density and $\Omega_f (t,x) = \tfrac{j_f}{|j_f|} \in \mathbb{S}^2$ is the local average orientation. It should be pointed out that in the above system \eqref{eq:SOK-NS}, the operator $\mathcal{Q}(f)$ is actually a Fokker-Planck operator, in which the last diffusion term denotes the Brownian noise in particle directions, in fact, the operator $\Delta_\omega$ stands for the Laplace-Beltrami operator on the sphere since we have $\omega \in \mathbb{S}^2$.

In the above SOK-NS system \eqref{eq:SOK-NS}, the scaling parameter $\eps$ indicates the long-time scaling $t'= t/\eps$. As $\eps$ goes to zero, a formal coarse-graining process stated in \cite{DMVY-2017-arXiv} shows that the hydrodynamic limits of the SOK-NS \eqref{eq:SOK-NS} is exactly the SOH-NS system \eqref{SOH-NS} (see Theorem 6.7 in page 38 of \cite{DMVY-2017-arXiv}). The main challenge of deriving the macroscopic equations is the lack of conservation laws for the self-organized kinetic models. To overcome this difficulty, the \emph{Generalized Collision Invariants} (GCI) are employed in \cite{DM-2008-M3AS} to derive the macroscopic equations. For the sake of clarity and completeness, we list here some basic properties of the self-organized kinetic (SOK) model, for more details the readers are refereed to \cite{DFL-2015-ARMA,DFLMN-2013-Schwartz,Frouvl-2012-M3AS} and references therein.

Firstly we introduce the equilibrium of $\mathcal{Q}$, which are expressed by the von Mises-Fisher (VMF) distributions with respect to the local mean orientation $\Omega \in \mathbb{S}^{n-1}$ for $n$-dimension case, namely,
	\begin{align}\no
	  \mathbb{E} = & \{ f |\ \mathcal{Q}(f)=0 \}
	 = \{\omega \mapsto \rho M_\Omega (\omega),\ \forall \rho \in \mathbb{R}_+,\ \Omega \in \mathbb{S}^{n-1} \},
	\end{align}
where the VMF distribution is defined as
	\begin{align}
	  M_\Omega (\omega) = Z^{-1} \exp(\kappa \omega \cdot \Omega)
	\end{align}
with a constant $Z=\int_{\omega \in \mathbb{S}^{n-1}} \exp(\kappa \omega \cdot \Omega)\d \omega$ independent of $\Omega$. The VMF distribution enjoys the following properties:
	\begin{itemize}

	  \item[i)] $M_\Omega (\omega)$ is a probability density, i.e., $\int_{\omega \in \mathbb{S}^{n-1}} M_\Omega (\omega)\d v=1$;

	  \item[ii)] The first moment of $M_\Omega (\omega)$ satisfies
	  $\int_{\omega \in \mathbb{S}^{n-1}} \omega M_\Omega (\omega)\d \omega=c_1 \Omega, $
	  where the coefficient $c_1 \in [0,1]$ defined before denotes the order parameter in the study of phase transitions.

	\end{itemize}

Note that the formula $P_{\omega^\bot} \Omega =\nabla_\omega (\omega \cdot \Omega)$ ensures that the collision operator $\mathcal{Q}$ can be rewritten as
	\begin{align*}
	  \mathcal{Q}(f) = D \nabla_\omega \cdot \left( M_{\Omega_f} \nabla_\omega ( \tfrac{f}{M_{\Omega_f}} ) \right),
	\end{align*}
which results in a dissipation relation
	\begin{align}
	    \int_{\omega \in \mathbb{S}^{n-1}} \mathcal{Q}(f) \tfrac{f}{M_{\Omega_f}} \d \omega
	  = -D \int_{\omega \in \mathbb{S}^{n-1}} \abs{\nabla_\omega (\tfrac{f}{M_{\Omega_f}})}^2 M_{\Omega_f} \d \omega
	  \le 0.
	\end{align}
This implies that $\mathcal{Q}(f)=0$ is equivalent to $f \in \mathbb{E}$.

As mentioned above, one of the main difficulties to derive the macroscopic equations of the SOK model is that it obeys only the conservation law of mass. To recover the missing momentum conservation related to the quantity $\Omega(t,x)$, Degond-Motsch introduce the concept of the \emph{Generalized Collision Invariants} (GCI) in \cite{DM-2008-M3AS}.
	\begin{definition}[\cite{DM-2008-M3AS}] \label{def:GCI}
	  For any given $\Omega \in \mathbb{S}^{n-1}$, the linearized collision operator $\mathcal{L}_{\Omega}$ is defined as
		  \begin{align}
		      \mathcal{L}_{\Omega} f \triangleq \Delta_{\omega} f- \kappa \nabla_{\omega} \cdot (P_{\omega^\bot}\Omega f)
		    = \nabla_{\omega} \cdot \left(M_{\Omega} \nabla_{\omega} (\tfrac{f}{M_{\Omega}})\right).
		  \end{align}
	  The Generalized Collision Invariants (GCI) are the elements in the null space of $\mathcal{L}_{\Omega}$:
	  \begin{align}
	      \mathcal{N}(\mathcal{L}_{\Omega}) \triangleq & \left\{ \psi \Big| \int_{v \in \mathbb{S}^{n-1}} \mathcal{L}_{\Omega}f\, \psi \,\d \omega=0,\ \forall f \textrm{ such that } \Omega_f= \Omega \right\} \\\no
	      =& \left\{ \psi|\ \mathcal{L}_{\Omega}^* \psi(v) = A\cdot \epsilon \textrm{ with } A\cdot \Omega =0 \right\} \\\no
	      =& \left\{ \omega \mapsto h(\omega\cdot\Omega)A \cdot \omega +C \textrm{ with } C \in \mathbb{R},\ A\in \mathbb{R}^n, \textrm{ and } A\cdot \Omega =0 \right\},
	  \end{align}
	  where the operator $\mathcal{L}_{\Omega}^*$ is the adjoint of the linearized operator $\mathcal{L}_{\Omega}$, which takes the form,
	  \begin{align*}
	      \mathcal{L}_{\Omega}^* \psi = -\Delta_{\omega} \psi - \kappa \Omega \cdot \nabla_{\omega} \psi
	    = -\tfrac{1}{M_\Omega} \nabla_{\omega} \cdot (M_\Omega \nabla_{\omega} \psi).
	  \end{align*}
	  Here, $h(\omega \cdot \Omega)=h(\cos \theta)$ is some certain function satisfying that $h (\mu) = ( 1 - \mu^2 )^{- \frac{1}{2}} g(\mu) \geq 0$ for $g$ being the unique solution of the differential equation
\begin{equation*}
  - ( 1 - \mu^2 ) \partial_\mu \big{(} e^{\kappa \mu} (1-\mu^2) \partial_\mu g \big{)} + e^{\kappa \mu} g = - (1-\mu^2)^\frac{3}{2} e^{\kappa \mu}
\end{equation*}
in the weighted $H^1$ space $\mathcal{V}$ given by
$$ \mathcal{V} = \big{\{} g \ | \ ( 1 - \mu^2 )^{- \frac{1}{2}} g \in L^2 (-1, 1)\,, \ ( 1 - \mu^2 )^{\frac{1}{2}} \partial_\mu g \in L^2 (-1,1) \big{\}} \,.$$

	\end{definition}

To justify the SOH-NS from SOK-NS in the context of local-in-time classical solutions which we obtained for SOH-NS in Theorem \ref{Thm-WP-SOHNS-3D}, we take the following ansatz for the scaled SOK-NS system \eqref{eq:SOK-NS} with respect to the square root of scaling parameter $\eps$:
  \begin{align}\label{eq:expansion}
    f^\eps = f_0 + \sqrt\eps f_R^\eps = \rho_0 M_{\Omega_0} + \sqrt\eps f_R^\eps, \quad\! \mbox{and} \quad\!
    v^\eps = v_0 + \sqrt\eps v_R^\eps,
  \end{align}
where $(\rho_0, \Omega_0, v_0)$ be the solution to the Cauchy problem of the SOH-NS system. Moreover, we require that $\nabla_x \cdot v_0 =0$, and $\int_{\mathbb{S}^2} f^\eps \d \omega = \int_{\mathbb{S}^2} f_0 \d \omega$, $j_{f^\eps} = j_{f_0}$, which yield that $\nabla_x \cdot v_R^\eps = 0$, and
	\begin{align}
		f_R^\eps \in \Phi_0 = \left\{ \phi \in L^1(\mathbb{S}^2)| \int_{\mathbb{S}^2} \phi \d \omega = 0, \ \int_{\mathbb{S}^2} \omega \phi \d \omega = 0  \right\}.
	\end{align}

Then, the remainder $(f_R^{\eps},\ v_R^{\eps})$ satisfies
  \begin{align} \label{eq:remainder} 
    \begin{cases}
      \p_t f_R^{\eps} + \nabla_x\cdot(u_0 f_R^{\eps}) + \nabla_{\omega}\cdot(\mathcal{F}_0 f_R^{\eps})
      = \frac{1}{\eps}\mathcal{L}_{\Omega_0} f_R^{\eps} - \frac{1}{\sqrt\eps} h_0 - h_1 ,
    \\[3pt]
      Re (\p_t v_R^\eps + v_0 \cdot\! \nabla_x v_R^\eps + v_R^\eps \cdot \nabla_x v_0 + \sqrt\eps v_R^\eps \cdot \nabla_x v_R^\eps)
      + b \nabla_x\cdot\! Q_{f_R^\eps} \!=\! -\nabla_x p_R^\eps + \Delta_x v_R^\eps,
    \\[3pt]
      \nabla_x \cdot v_R^\eps =0,
    \end{cases}
  \end{align}
where $\mathcal{L}_{\Omega_0} f = \nabla_\omega \cdot (M_{\Omega_0} \nabla_\omega (\frac{f}{M_{\Omega_0}}))$, and
  \begin{align}
    h_0 =\ & \partial_t f_0 + u_0 \cdot \nabla_x f_0 + \nabla_\omega \cdot (\mathcal{F}_0 f_0), \\
    h_1 =\ & v_R^\eps \cdot \nabla_x (f_0 + \sqrt\eps f_R^\eps) + \nabla_\omega \cdot [\mathcal{P}_{\omega^\perp}B(v_R^\eps) \omega (f_0 + \sqrt\eps f_R^\eps)] .
  \end{align}

We now state our main result of the hydrodynamic limit.
\begin{theorem}[Hydrodynamic Limit] \label{thm:limit}
  Let $s\ge 2,\ m>s+4$. Assume $(\rho_0, \Omega_0, v_0)$ be the solution to the Cauchy problem of the SOH-NS system \eqref{SOH-NS} provided by Theorem \ref{Thm-WP-SOHNS-3D} with $H^m$-initial data $(\rho_0^{in}, \Omega_0^{in}, v_0^{in})$. Furthermore, assume
	  \begin{align}
	    f^{\eps,in}(x,\omega) =\ & \rho_0^{in}(x) M_{\Omega_0^{in}(x)}(\omega) + \sqrt\eps f_R^{\eps,in}(x,\omega), \\
	    v^{\eps,in}(x) =\ & v_0^{in}(x) + \sqrt\eps v_R^{\eps,in}(x),
	  \end{align}
  with the bound $\|f_R^{\eps,in}(x,\omega)\|_{H^s_{x,\omega}} \le C$ and $\|v_R^{\eps,in}(x)\|_{H^s_{x}} \le C$.

  Then there exists an $\eps_0 >0$ such that, for all $\eps \in (0,\eps_0)$, the coupling SOK-NS system \eqref{eq:SOK-NS} admits a unique solution $(f^\eps, v^\eps) \in L^\infty ([0,T];\ H^s_{x,\omega} \times H^s_x)$ in the class that $f^\eps \in \Phi_0$ and $\nabla_x \cdot v_R^\eps = 0$, being of the form
	  \begin{align}
	    f^\eps(t,x,\omega) =\ & \rho_0(t,x) M_{\Omega_0(t,x)}(\omega) + \sqrt\eps f_R^\eps(t,x,\omega), \\
	    v^\eps(t,x) =\ & v_0(t,x) + \sqrt\eps v_R^\eps(t,x),
	  \end{align}
  with $\|v_R^\eps\|_{H^s_x} + \|f_R^\eps\|_{H^s_{x,\omega}} \le C,$ where $C$ is independent of $\eps$ and $t\in[0,T]$.
\end{theorem}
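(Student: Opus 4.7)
The plan is to adopt Caflisch's Hilbert expansion strategy: treat the remainder system \eqref{eq:remainder} as a coupled initial value problem for $(f_R^\eps, v_R^\eps)$, establish uniform-in-$\eps$ $H^s$ energy estimates on a time interval no larger than the SOH-NS lifespan supplied by Theorem \ref{Thm-WP-SOHNS-3D}, and then recover existence and uniqueness by a linearization/iteration scheme. Throughout, the moment constraints $f_R^\eps \in \Phi_0$ and $\nabla_x \cdot v_R^\eps = 0$ should be propagated by testing the $f_R^\eps$-equation against $1$ and $\omega$ and by applying the Leray projector, crucially using that $(\rho_0, \Omega_0, v_0)$ already solves SOH-NS so that the corresponding hydrodynamic moments of $h_0$ vanish.

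The core energy identity would be obtained by differentiating $\alpha$ times in $x$ (for every $|\alpha| \le s$), multiplying the $f_R^\eps$-equation by $\partial_x^\alpha f_R^\eps / M_{\Omega_0}$ and integrating over $\R^3 \times \mathbb{S}^2$, while simultaneously testing the $v_R^\eps$-equation with $\partial_x^\alpha v_R^\eps$. The collision term $\tfrac{1}{\eps} \mathcal{L}_{\Omega_0} f_R^\eps$ yields the dissipation $\tfrac{D}{\eps} \int M_{\Omega_0} |\nabla_\omega (\partial_x^\alpha f_R^\eps / M_{\Omega_0})|^2 \d\omega \d x$, which the weighted spherical Poincar\'e inequality on the constraint space $\Phi_0$ upgrades to full control of $\partial_x^\alpha f_R^\eps$ in the $M_{\Omega_0}^{-1}$-weighted $L^2$; the NS viscosity supplies $\|\nabla_x \partial_x^\alpha v_R^\eps\|_{L^2_x}^2$. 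Transport, force, and coupling terms such as $\nabla_x \cdot (u_0 f_R^\eps)$, $\nabla_\omega \cdot (\mathcal{F}_0 f_R^\eps)$, and the Leray-projected $b \nabla_x \cdot Q_{f_R^\eps}$ are absorbed by standard Moser product and commutator estimates, using the $H^m$ bounds on $(\rho_0, \Omega_0, v_0)$ from Theorem \ref{Thm-WP-SOHNS-3D} together with the gap $m > s + 4$.

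The main obstacle is the apparently singular source $-\tfrac{1}{\sqrt{\eps}} h_0$. The resolution exploits the structural meaning of the SOH-NS derivation: because $(\rho_0, \Omega_0, v_0)$ satisfies \eqref{SOH-NS}, $h_0$ is orthogonal in $L^2(\mathbb{S}^2)$ to every generalized collision invariant of Definition \ref{def:GCI}, so the Fredholm alternative for $\mathcal{L}_{\Omega_0}$ in the $M_{\Omega_0}$-weighted framework of \cite{DM-2008-M3AS} furnishes $H_0 \in H^s_{x,\omega}$ with $\mathcal{L}_{\Omega_0} H_0 = h_0$. Integrating by parts in $\omega$ then converts
\[
  -\tfrac{1}{\sqrt{\eps}} \int h_0 \, \tfrac{\partial_x^\alpha f_R^\eps}{M_{\Omega_0}} \d\omega \d x = \tfrac{1}{\sqrt{\eps}} \int M_{\Omega_0} \nabla_\omega\!\bigl(\tfrac{\partial_x^\alpha H_0}{M_{\Omega_0}}\bigr) \cdot \nabla_\omega\!\bigl(\tfrac{\partial_x^\alpha f_R^\eps}{M_{\Omega_0}}\bigr) \d\omega \d x + (\textrm{commutators}),
\]
after which Cauchy--Schwarz and Young's inequality split the right-hand side into a piece absorbed by $\tfrac{D}{2\eps}$-dissipation and an $O(1)$ remainder bounded by $\|H_0\|_{H^s_{x,\omega}}$. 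The companion forcing $h_1$ is handled by decomposing its $O(1)$ part, $v_R^\eps \cdot \nabla_x f_0$ and $\nabla_\omega \cdot [P_{\omega^\bot}(\widetilde{\lambda} S(v_R^\eps) + A(v_R^\eps))\omega f_0]$, against the NS dissipation on $\nabla_x v_R^\eps$ via Sobolev algebra, while the $\sqrt{\eps}$-weighted nonlinear contributions are made small by choosing $\eps_0$ small.

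Once the a priori inequality $\tfrac{d}{dt} \mathcal{E}_s + \mathcal{D}_s \le C(1 + \mathcal{E}_s + \sqrt{\eps}\,\mathcal{E}_s^{3/2})$ is closed uniformly in $\eps$ with $\mathcal{E}_s = \|f_R^\eps\|_{H^s_{x,\omega}}^2 + Re\|v_R^\eps\|_{H^s_x}^2$ and $\mathcal{D}_s$ the associated $\omega$- and $x$-dissipation, existence on $[0,T]$ follows from a standard iteration: solve the linearized remainder equations with transport velocity $u_0 + \sqrt{\eps}\,v_R^{\eps,(n-1)}$ and force $\mathcal{F}_{(f_0 + \sqrt{\eps} f_R^{\eps,(n-1)},\, v_0 + \sqrt{\eps} v_R^{\eps,(n-1)})}$ frozen at the previous iterate, apply the same energy estimate to keep the iterates in a bounded set, and extract a limit in a lower-regularity norm; contraction at the $L^2$ level gives both convergence and uniqueness, and the constraints $f_R^\eps \in \Phi_0$ and $\nabla_x \cdot v_R^\eps = 0$ persist because the linearized scheme is compatible with them.
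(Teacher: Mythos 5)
Your overall roadmap matches the paper's: Caflisch-style Hilbert expansion around the $H^m$ SOH--NS solution, a weighted energy estimate for the remainder system \eqref{eq:remainder}, and an iteration scheme for local existence. But as it stands the proposal has two genuine gaps that block the closing of the uniform \emph{a priori} estimate.

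First, you take only pure spatial derivatives $\partial_x^\alpha$. The theorem asserts $\|f_R^\eps\|_{H^s_{x,\omega}} \le C$, which requires controlling $\nabla_x^k \nabla_\omega^l f_R^\eps$ for all $k+l \le s$, not just $l=0$. The paper's energy functional $E = \|v_R^\eps\|_{H^s_x}^2 + \sum_{k+l \le s} \eta_0^{k+l} \|\nabla_x^k\nabla_\omega^l f_R^\eps / M_{\Omega_0}\|_{M_{\Omega_0}}^2$ includes all mixed derivatives precisely for this reason. Moreover the angular derivatives do not preserve the constraint $\Phi_0$, so the weighted Poincar\'e inequality must take a different form (a sum over $l' \le l-1$ on the right, as in \eqref{eq:Poincare-inequ-xw}) — a wrinkle your scheme would need to address.

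Second, and more seriously, you treat the $\tfrac{1}{\eps}\mathcal{L}_{\Omega_0}$ commutator as a ``standard commutator estimate,'' but it is not. Writing $\nabla_x^s \mathcal{L}_{\Omega_0} f = \mathcal{L}_{\Omega_0}\nabla_x^s f - \sum_{s_1\ge 1}\nabla_\omega\cdot(\kappa P_{\omega^\bot}\nabla_x^{s_1}\Omega_0 \nabla_x^{s_2} f)$ and integrating against $\nabla_x^s f/M_{\Omega_0}$, the commutator contributes, after the weighted Poincar\'e inequality,
\begin{align*}
\tfrac{C}{\eps}\sum_{s_2<s}\|\nabla_x^{s-s_2}\Omega_0\|_{L^\infty}\,\big\|\nabla_\omega\big(\tfrac{\nabla_x^{s_2} f_R^\eps}{M_{\Omega_0}}\big)\big\|_{M_{\Omega_0}}\,\big\|\nabla_\omega\big(\tfrac{\nabla_x^{s} f_R^\eps}{M_{\Omega_0}}\big)\big\|_{M_{\Omega_0}}.
\end{align*}
Without further structure this is of size $C_0\, D$ with $C_0 \gtrsim \|\Omega_0\|_{H^{s+2}}$, which the single unit of dissipation $D$ on the left cannot absorb. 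The paper's remedy is the geometrically weighted energy with factors $\eta_0^{m}$ ($C_0\eta_0^{1/2}\le 1/6$), which downgrades this term to $C_0 \eta_0^{1/2} D_{\eta_0}$ and makes absorption possible. Your proposal contains no mechanism for generating this smallness, so the estimate $\tfrac{\d}{\d t}\mathcal{E}_s + \mathcal{D}_s \le C(1+\mathcal{E}_s + \sqrt\eps\,\mathcal{E}_s^{3/2})$ you write does not follow.

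One minor remark: the Fredholm detour you propose for $-\tfrac{1}{\sqrt\eps}h_0$ — solving $\mathcal{L}_{\Omega_0} H_0 = h_0$ and integrating by parts in $\omega$ — is a legitimate alternative, but it requires verifying the solvability condition and the $H^s$ regularity of $H_0$. The paper instead just applies Cauchy--Schwarz, the weighted Poincar\'e inequality, and Young to bound $\tfrac{1}{\sqrt\eps}\langle\!\langle \nabla_x^s h_0, \nabla_x^s f_R^\eps/M_{\Omega_0}\rangle\!\rangle$ by a constant plus a small multiple of $\tfrac{1}{\eps}\|\nabla_\omega(\nabla_x^s f_R^\eps/M_{\Omega_0})\|^2$; this is simpler and avoids the orthogonality/regularity hypotheses. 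You should also note that the absorption of $\tfrac{1}{\sqrt\eps}$ here relies on the dissipation being $\tfrac{1}{\eps}\|\nabla_\omega(\cdot)\|^2$, so Poincar\'e (or your $H_0$ trick) is indispensable, not cosmetic.
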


\begin{remark}
	In fact, we can make a general expansion ansatz for the scaled SOK-NS system \eqref{eq:SOK-NS}
		\begin{align}
    f^\eps = f_0 + \eps^\alpha f_R^\eps = \rho_0 M_{\Omega_0} + \eps^\alpha f_R^\eps, \quad\! \mbox{and} \quad\!
    v^\eps = v_0 + \eps^\beta v_R^\eps, 	
		\end{align}	
	then the remainder equation can be expressed in a more general form:
	  \begin{align} \label{eq:remainder-general} 
    \begin{cases}
      \p_t f_R^{\eps} + \nabla_x\cdot(u_0 f_R^{\eps}) + \nabla_{\omega}\cdot(\mathcal{F}_0 f_R^{\eps})
      = \frac{1}{\eps}\mathcal{L}_{\Omega_0} f_R^{\eps} - \frac{1}{\eps^\alpha} h_0 - \eps^{\beta-\alpha} \bar h_1 ,
    \\[3pt]
      Re (\p_t v_R^\eps + v_0 \cdot \nabla_x v_R^\eps + v_R^\eps \cdot \nabla_x v_0 + \eps^\beta v_R^\eps \cdot \nabla_x v_R^\eps)
    \\[2pt]
      \hspace*{5cm} + \eps^{\alpha-\beta} b \nabla_x\cdot Q_{f_R^\eps} = - \nabla_x p_R^\eps + \Delta_x v_R^\eps,
    \\[3pt]
      \nabla_x \cdot v_R^\eps =0,
    \end{cases}
  \end{align}	
where $ \bar h_1 =v_R^\eps \cdot \nabla_x (f_0 + \eps^\alpha f_R^\eps) + \nabla_\omega \cdot [\mathcal{P}_{\omega^\perp}B(v_R^\eps) \omega (f_0 + \eps^\alpha f_R^\eps)]$. Through the proof below, we can obtain the relation $\alpha\le 1/2$ and $\alpha - \beta \in [0,1/2]$. The choose here $\alpha=\beta=1/2$ ensures actually the \emph{optimal} expansion index in the sense that the remainder functions $f^\eps_R$ and $v^\eps_R$ are bounded in some function spaces.
\end{remark}

\begin{remark}
  Note that we have improved the previous results \cite{JXZ-2016-SIMA}, which established the rigorous analysis of hydrodynamic limits from the self-organized kinetic equations to the self-organized hydrodynamic equations. To apply the weighted Poincar\'e inequalities, some higher-order moment assumptions are introduced in \cite{JXZ-2016-SIMA}. In the present paper, by choosing some better weight function, we work in some appropriate functional spaces such that there is no need to make the previous assumptions any more. In fact, our proof are performed in the weighted Sobolev spaces like $\| \frac{\cdot}{\sqrt{M_{\Omega}}} \|$ (see \S 4-5 below), due to the reason that the von Mises-Fisher function $M_{\Omega(t,x)}(\omega)$ is dependent on the spatial variables.

\end{remark}


The organization of this paper is as follows: The next section is devoted to the \emph{a priori} estimate for the SOH-NS system \eqref{SOH-NS}, under which we obtain the local well-posedness result of the SOH-NS system \eqref{SOH-NS}, and thus complete the proof of Theorem \ref{Thm-WP-SOHNS-3D}. To prove the hydrodynamic limits from SOK-NS to SOH-NS stated in Theorem \ref{thm:limit}, we establish in section 4 the uniform-in-$\eps$ \emph{a priori} estimate for the remainder equation \eqref{eq:remainder}, which stated in Lemma \ref{lemm:apriori-uniform}. Then based on Theorem \ref{Thm-WP-SOHNS-3D} and Lemma \ref{lemm:apriori-uniform}, we can finally prove the convergence result in the last section. For the convenience of readers, we write explicitly in appendix the transform from the original SOH-NS system \eqref{SOH-NS} to the system \eqref{SOH-NS-3D-SPT-Smp} by the stereographic projection transform.

\bigskip
\noindent{\bf Notations.} For notational simplicity, we denote by $\skp{\cdot}{\cdot}$ the usual $L^2\mbox{-}$inner product in variables $x$, by $\|\cdot\|_{L^2_x}$ its corresponding $L^2$-norm, and by $\|\cdot\|_\hx{s}$ the higher-order derivatives $H^s\mbox{-}$norm in variables $x$.

On the other hand, when we consider the spatial variables $x$ and microscopic variables $\omega$ at the same time, it is convenient for us to introduce the weighted Sobolev spaces. For that, we denote by $\langle\!\!\!\langle \cdot,\, \cdot \rangle\!\!\!\rangle$ the standard $L^2$ inner product in both variables $x$ and $\omega$, and by $\nm{\cdot}_{L^2_{x,\omega}}$ its corresponding norm, then we can define the weighted $L^2$ inner product that
	\begin{align*}
		\sskm{f}{g} \triangleq \left\langle\!\!\!\left\langle f,\, g M \right\rangle\!\!\!\right\rangle
			= \iint_{\Omega \times \mathbb{R}^3} fg \,M \d \omega\d x,
	\end{align*}
for any pairs $f(x,\omega), g(x,\omega) \in L^2_{x,\, \omega}$. We also use $\nm{\cdot}_M$ to denote the weighted $L^2$-norm with respect to the measure $M \d \omega\d x$.

Let $\alpha=(\alpha_1,\, \alpha_2,\, \alpha_3) \in \mathbb{N}^3 $ be a multi-index with its length defined as $\textstyle |\alpha| = \sum_{i=1}^3 \alpha_i $. We also use the notation $\nabla_x^k$ to denote the multi-derivative operator $\nabla_x^\alpha = \p_{x_1}^{\alpha_1} \p_{x_2}^{\alpha_2} \p_{x_3}^{\alpha_3}$ with $|\alpha|=k$.

At last, we mention that the notation $A \ls B$ will be used in the following texts to indicate that there exists some constant $C>0$ such that $A \le C B$. Furthermore, the notation $A \sim B$ means that the terms of both sides are equivalent up to a constant, namely, there exists some constant $C$ such that $ C^{-1} B \le A \le C B$.


\section{A Priori Estimates for SOH-NS System}
\label{sec:Apriori-Est-3D}

In this section, we derive \emph{a priori} estimates of the SOH-NS system \eqref{SOH-NS}. In order to overcome the inconvenience resulted from the geometric constraint $|\Omega| = 1$, we consider the system \eqref{SOH-NS-3D-SPT-Smp}, which is taken the stereographic projection transforms to deal with the condition $|\Omega| = 1$. We first define the energy functionals
\begin{equation*}
  \begin{aligned}
    \mathcal{E}(t) = & \| \widehat{\rho} \|^2_{H^s} + \| \phi \|^2_{H^s} + \| \psi \|^2_{H^s} + Re \| v \|^2_{H^s} \,, \\
    \mathcal{D} (t) = & \gamma \| \nabla_x \phi \|^2_{H^s} + \gamma \| \nabla_x \psi \|^2_{H^s} + \| \nabla_x v \|^2_{H^s} \,.
  \end{aligned}
\end{equation*}
Thus we can derive the following \emph{a priori} estimates for the SOH-NS system \eqref{SOH-NS-3D-SPT-Smp}.

\begin{proposition}\label{Prop-Apr-Est-3D}
  Let $s \geq 3$ and $Re , \gamma > 0$. Assume that $( \widehat{\rho}, \phi, \psi, v )$ is a sufficiently smooth solution of the SOH-NS system \eqref{SOH-NS-3D-SPT-Smp} in three dimension on the interval $[0,T]$. Then there is a constant $C > 0$, depending only upon $s$, all of coefficients of the system \eqref{SOH-NS-3D-SPT-Smp} and the initial data $\widehat{\rho}$, such that the inequality
  \begin{equation}\label{Aprori-Est-SOHNS-3D-SPT-Simp}
    \tfrac{\d}{\d t} \mathcal{E} (t) + \mathcal{D} (t) \leq C \Big{[} 1 + \exp \big{(} C \int_0^t \mathcal{E}^\frac{1}{2} (\tau) \d \tau \big{)} \Big{]} \mathcal{E} (t) [ 1 + \mathcal{E}^{3s} (t) ]
  \end{equation}
  holds for all $t \in [0,T]$.
\end{proposition}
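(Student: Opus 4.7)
The plan is to carry out a higher-order energy estimate on the stereographic-projection system \eqref{SOH-NS-3D-SPT-Smp}, because the projector $P_{\Omega^\perp}$ has been resolved into smooth nonlinear coefficients $\mathcal{H}_\phi,\mathcal{H}_\psi,\mathcal{G}$ of $(\widehat{\rho},\phi,\psi,v)$. For each multi-index $\alpha$ with $|\alpha|\le s$, I would apply $\partial_x^\alpha$ to the four equations and pair against $\partial_x^\alpha\widehat{\rho}$, $\partial_x^\alpha\phi$, $\partial_x^\alpha\psi$ and $Re\,\partial_x^\alpha v$ respectively in $L^2_x$, then sum over $\alpha$. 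The density equation contributes only a convective piece (no dissipation), while integration by parts on the $-\gamma\Delta_x\phi$, $-\gamma\Delta_x\psi$ and $\Delta_x v$ terms produces exactly $\gamma\|\nabla_x\phi\|_{H^s}^2$, $\gamma\|\nabla_x\psi\|_{H^s}^2$ and $\|\nabla_x v\|_{H^s}^2$, i.e.\ the dissipation $\mathcal{D}(t)$ on the left-hand side; the pressure drops out because $\nabla_x\cdot v=0$.

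The bulk of the work is then a bookkeeping of the remaining terms with three standard tools: (i) the classical commutator/Moser estimate
$\|[\partial_x^\alpha,f]\,g\|_{L^2}\lesssim \|\nabla_x f\|_{L^\infty}\|g\|_{H^{s-1}}+\|f\|_{H^s}\|g\|_{L^\infty}$
to rewrite $\partial_x^\alpha(U\cdot\nabla_x\widehat{\rho})$ and $\partial_x^\alpha(V\cdot\nabla_x(\phi,\psi))$ as a skew-symmetric piece (which becomes a $\tfrac12\langle\nabla_x\cdot U,|\partial^\alpha\widehat{\rho}|^2\rangle$ using $\nabla_x\cdot v=0$ and the smoothness of $\Omega$ in $(\phi,\psi)$) plus a remainder bounded by $\mathcal{E}$ times a polynomial in $\mathcal{E}^{1/2}$; (ii) the algebra/composition estimate $\|F(\phi,\psi)\|_{H^s}\le C_F(\|(\phi,\psi)\|_{L^\infty})(1+\|(\phi,\psi)\|_{H^s})^s$ for any smooth $F$, which applies to every coefficient appearing in $\mathcal{H}_\phi,\mathcal{H}_\psi,\mathcal{G}$ since $W=1+\phi^2+\psi^2\ge 1$ makes $1/W$ smooth, so each of the rational expressions in \eqref{Terms-Def} is a smooth function of $(\phi,\psi)$; this is ultimately the source of the $\mathcal{E}^{3s}$ growth, as differentiating products such as $W^2\Omega_\phi$ or $\phi/W$ up to order $s$ generates polynomials of degree $\lesssim s$ in $(\phi,\psi)$ and their derivatives; (iii) Young's inequality with an arbitrarily small parameter to absorb all top-order couplings into the dissipation $\mathcal{D}(t)$.

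The delicate cross-coupling to watch is the pair of terms $ac_1(\Omega_\phi\!\cdot\!\nabla_x\phi+\Omega_\psi\!\cdot\!\nabla_x\psi)$ in the $\widehat{\rho}$-equation and $\tfrac{a}{4\kappa}W^2\Omega_\phi\!\cdot\!\nabla_x\widehat{\rho}$ (resp.\ with $\Omega_\psi$) in the $\phi$- (resp.\ $\psi$-) equation, because $\widehat{\rho}$ carries no dissipation while each of these couplings puts a full spatial derivative on the other unknown. I would handle this by estimating the top-order $\phi$-coupling in the $\widehat{\rho}$-equation as $\|\Omega_\phi\|_{L^\infty}\|\nabla^{s+1}\phi\|_{L^2}\|\partial^\alpha\widehat{\rho}\|_{L^2}\le \delta\gamma\|\nabla^{s+1}\phi\|_{L^2}^2+C_\delta\mathcal{E}(t)$ and, symmetrically, integrating by parts once inside $\langle\partial^\alpha(W^2\Omega_\phi\!\cdot\!\nabla_x\widehat{\rho}),\partial^\alpha\phi\rangle$ to move the extra derivative from $\widehat{\rho}$ onto $\phi$, again absorbed by $\gamma\|\nabla^{s+1}\phi\|^2$. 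The same device works for the Navier–Stokes forcing $be^{\widehat{\rho}}\mathcal{G}$: its highest-order piece $\nabla_x\!\cdot\!(\rho\mathcal{Q}(\Omega))$ is already in divergence form, so one integration by parts puts the extra derivative on $v$ and feeds it into $\|\nabla_x v\|_{H^s}^2$.

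The last—and conceptually crucial—step is to control the $L^\infty$-norm of $\widehat{\rho}$, which enters $e^{\widehat{\rho}}$ as a multiplicative factor in front of $\mathcal{G}$ and also in the composition estimates of (ii). Here I would use the transport structure of the first equation of \eqref{SOH-NS-3D-SPT-Smp}: along the flow of $U$, the source is $-ac_1(\Omega_\phi\!\cdot\!\nabla_x\phi+\Omega_\psi\!\cdot\!\nabla_x\psi)$, whose $L^\infty$-norm is controlled by $C\|\nabla_x(\phi,\psi)\|_{L^\infty}\le C\mathcal{E}^{1/2}(t)$ via the Sobolev embedding $H^{s-1}\hookrightarrow L^\infty$ (using $s\ge 3$). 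Hence
$\|\widehat{\rho}(t)\|_{L^\infty}\le\|\widehat{\rho}^{\,in}\|_{L^\infty}+C\int_0^t\mathcal{E}^{1/2}(\tau)\d\tau$,
so that $e^{\|\widehat{\rho}\|_{L^\infty}}\le C\exp(C\int_0^t\mathcal{E}^{1/2}\d\tau)$; this dependence on the initial density is exactly what produces the factor $[1+\exp(C\int_0^t\mathcal{E}^{1/2}(\tau)\d\tau)]$ in the statement. Combining everything and using Young's inequality to absorb all $\nabla^{s+1}(\phi,\psi,v)$ contributions into $\mathcal{D}(t)$ yields the claimed inequality \eqref{Aprori-Est-SOHNS-3D-SPT-Simp}. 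The main technical obstacle is precisely this coupling of an undissipated transport variable $\widehat{\rho}$ to the parabolic block $(\phi,\psi,v)$ through both top-order linear couplings and the exponential nonlinearity $e^{\widehat{\rho}}$; everything else is, in essence, Moser-type product estimates and integration by parts.
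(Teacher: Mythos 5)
Your proposal is correct and follows essentially the same route as the paper's proof: energy estimates at order $s$ on the transformed system \eqref{SOH-NS-3D-SPT-Smp}, the skew-symmetry/IBP trick for the transport terms, moving the extra derivative off $\widehat{\rho}$ (which has no dissipation) onto $\phi$, $\psi$ or $v$ by integration by parts, and controlling $\|e^{\widehat{\rho}}\|_{L^\infty}$ along characteristics of $U$ to produce the factor $\exp(C\int_0^t\mathcal{E}^{1/2}\,\mathrm{d}\tau)$. The only noteworthy difference is one of packaging: you invoke abstract Kato--Ponce/Moser commutator bounds and a smooth-composition estimate, whereas the paper derives the needed product and composition inequalities by hand in Lemma~\ref{Lm-rho-L^infty} and Lemma~\ref{Lm-Auxil-Ineq} (explicit Fa\`a di Bruno-type expansions of $\nabla_x^k e^{\widehat{\rho}}$, $\nabla_x^k\Omega$, $\nabla_x^k W^2$), which is what produces the crude $\mathcal{E}^{3s}$ power; your sketch of the composition bound with exponent $s$ is the same coarse device. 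Both treatments handle the $\widehat{\rho}\leftrightarrow(\phi,\psi)$ and $\widehat{\rho}\leftrightarrow v$ cross-couplings identically (IBP to feed the top derivative into the parabolic block, then Young), so there is no substantive gap.
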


Before proving Proposition \ref{Prop-Apr-Est-3D}, we establish the following lemmas, in which the inequalities will be frequently utilized in the proof of Proposition \ref{Prop-Apr-Est-3D}. Specifically, Lemma \ref{Lm-rho-L^infty} is utilized to control the quantity $\| e^{\widehat{\rho}} \|_{L^\infty}$ and some $L^p$-norms of the higher-order derivatives of $e^{\widehat{\rho}}$ occurred in the proof of Proposition \ref{Aprori-Est-SOHNS-3D-SPT-Simp}, and the inequalities justified in Lemma \ref{Lm-Auxil-Ineq} will be frequently used in deriving \emph{a priori} estimation, which can greatly simplify the proof.

\begin{lemma}\label{Lm-rho-L^infty}
\noindent
\begin{enumerate}
	\item Assume that the function $\widehat{\rho}$ solve the equation
  $$ \partial_t \widehat{\rho} + U \cdot \nabla_x \widehat{\rho} + f = 0 $$
  with the initial condition $ \widehat{\rho} |_{t=0} = \widehat{\rho}^{in}$, where $f(t,x) \in L^1(0,\infty;L^\infty(\R^3))$. If the initial data satisfies $ 0 <  e^{\widehat{\rho}^{in}(x) } \leq \bar{\rho} < +\infty $ for the positive constants $\bar{\rho}$ and the vector field $U (t,x)$ is Lipschitz in the spatial variable $x$, then for all $(t,x) \in \R^+ \times \R^3$
	  \begin{equation*}
	     0 < e^{\widehat{\rho} (t,x)} \leq \bar{\rho} \exp \Big{(}   \int_0^t \| f (s, \cdot) \|_{L^\infty(\R^3)} \d s \Big{)} \,.
	  \end{equation*}

  \item For all integer $k \geq 1$, there is $C(k) > 0$ such that
	  \begin{equation}\label{Bnds-rho-norms}
	    \begin{aligned}
	      \| \nabla_x^k e^{\widehat{\rho}} \|_{L^\infty(\R^3)} \leq & C(k) \| e^{\widehat{\rho}} \|_{L^\infty(\R^3)} \sum_{I = 1}^k \| \widehat{\rho} \|^I_{H^{k+2}(\R^3)} \,, \\
	      \| \nabla_x^k e^{\widehat{\rho}} \|_{L^4(\R^3)} \leq & C(k) \| e^{\widehat{\rho}} \|_{L^\infty(\R^3)} \sum_{I = 1}^k \| \widehat{\rho} \|^I_{H^{k+1}(\R^3)} \,, \\
	      \| \nabla_x^k e^{\widehat{\rho}} \|_{L^2(\R^3)} \leq & C(k) \| e^{\widehat{\rho}} \|_{L^\infty(\R^3)} \sum_{I = 1}^k \| \widehat{\rho} \|^I_{H^{k}(\R^3)} \,.
	    \end{aligned}
	  \end{equation}
\end{enumerate}
\end{lemma}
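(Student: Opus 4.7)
For part (1), the plan is to propagate the bound along characteristics. Since $U(t,\cdot)$ is assumed Lipschitz in $x$, the ODE $\tfrac{\d}{\d t}X(t;y) = U(t, X(t;y))$ with $X(0;y)=y$ admits a unique global-in-time flow which is a diffeomorphism of $\R^3$ for each fixed $t$. Along this flow the transport equation reduces to the scalar ODE $\tfrac{\d}{\d t}\widehat\rho(t,X(t;y)) = -f(t,X(t;y))$, so integrating in time and then exponentiating gives
\begin{equation*}
  e^{\widehat\rho(t,X(t;y))} \;=\; e^{\widehat\rho^{in}(y)} \exp\!\Big( -\int_0^t f(s, X(s;y))\, \d s \Big) .
\end{equation*}
Bounding $|f(s,X(s;y))| \leq \|f(s,\cdot)\|_{L^\infty(\R^3)}$, using the hypothesis $e^{\widehat\rho^{in}} \leq \bar\rho$, and inverting the diffeomorphism $X(t;\cdot)$ then yields the claimed pointwise upper bound at an arbitrary $x\in\R^3$; strict positivity is automatic from the exponential.

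For part (2), I plan to expand $\nabla_x^k e^{\widehat\rho}$ via the Fa\`a di Bruno (iterated chain rule) formula,
\begin{equation*}
  \nabla_x^k e^{\widehat\rho} \;=\; e^{\widehat\rho} \sum_{I=1}^{k} \sum_{\substack{j_1+\cdots+j_I = k \\ j_i \geq 1}} C_{j_1,\ldots,j_I}\, \nabla_x^{j_1}\widehat\rho \cdots \nabla_x^{j_I}\widehat\rho ,
\end{equation*}
pull $e^{\widehat\rho}$ out in $L^\infty$, and control each product by the Moser-type product inequality
\begin{equation*}
  \big\| \nabla_x^{j_1}\widehat\rho \cdots \nabla_x^{j_I}\widehat\rho \big\|_{L^p(\R^3)} \;\lesssim\; \|\widehat\rho\|_{L^\infty(\R^3)}^{I-1}\, \|\nabla_x^k \widehat\rho\|_{L^p(\R^3)} ,
\end{equation*}
which follows from Gagliardo--Nirenberg interpolation. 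Combined with the 3D Sobolev embeddings $H^s(\R^3)\hookrightarrow L^\infty(\R^3)$ for $s>3/2$ and $H^{3/4}(\R^3)\hookrightarrow L^4(\R^3)$, this gives: for the $L^\infty$ target, I estimate each factor directly via $\|\nabla_x^{j_i}\widehat\rho\|_{L^\infty} \lesssim \|\widehat\rho\|_{H^{j_i+2}} \lesssim \|\widehat\rho\|_{H^{k+2}}$ (since $j_i\leq k$), yielding a term of size $\|\widehat\rho\|_{H^{k+2}}^I$; for the $L^4$ target, I use $\|\nabla_x^k\widehat\rho\|_{L^4} \lesssim \|\widehat\rho\|_{H^{k+1}}$ together with $\|\widehat\rho\|_{L^\infty} \lesssim \|\widehat\rho\|_{H^{k+1}}$ (valid once $k\geq 1$); for the $L^2$ target, I use $\|\nabla_x^k\widehat\rho\|_{L^2} \leq \|\widehat\rho\|_{H^k}$ and $\|\widehat\rho\|_{L^\infty} \lesssim \|\widehat\rho\|_{H^k}$ (valid for $k\geq 2$, whereas the case $k=1$ reduces to the trivial identity $\nabla_x e^{\widehat\rho} = e^{\widehat\rho}\nabla_x\widehat\rho$). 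Summation over $I$ then reproduces the geometric sums on the right-hand side of \eqref{Bnds-rho-norms}.

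No serious obstacle is anticipated. Part (1) is the textbook method of characteristics, and part (2) is a routine chain-rule computation combined with standard Moser--Kato--Ponce product estimates and 3D Sobolev embeddings. The only point requiring a little care is matching the Sobolev index ($H^{k+2}$, $H^{k+1}$, or $H^k$) to the corresponding $L^p$ target so that both the top-order factor and the lower-order factors in $L^\infty$ can be absorbed simultaneously into a single Sobolev norm; the chosen indices $k+2$, $k+1$, $k$ are precisely the minimal ones for which this holds uniformly over all partitions appearing in the Fa\`a di Bruno sum.
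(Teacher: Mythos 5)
Your proof is correct and follows essentially the same route as the paper's: part (1) is the standard method-of-characteristics propagation (identical to the paper, including the integration of $\tfrac{\d}{\d t}\widehat\rho(t,X(t,y)) = -f(t,X(t,y))$ and the bound through $\|f(s,\cdot)\|_{L^\infty}$), and part (2) starts from the same Fa\`a di Bruno expansion $\nabla_x^k e^{\widehat\rho} = e^{\widehat\rho}\sum_{I=1}^k\Theta^I(\widehat\rho)$ that the paper writes down. The one cosmetic difference is in how the products $\prod_i\nabla_x^{j_i}\widehat\rho$ are controlled: the paper says ``H\"older inequality, Sobolev embedding $H^2\hookrightarrow L^\infty$, $H^1\hookrightarrow L^4$ and induction,'' whereas you package the same content into a single Gagliardo--Nirenberg/Moser product estimate $\|\prod_i\nabla_x^{j_i}\widehat\rho\|_{L^p}\lesssim\|\widehat\rho\|_{L^\infty}^{I-1}\|\nabla_x^k\widehat\rho\|_{L^p}$ and then absorb $\|\widehat\rho\|_{L^\infty}$ by Sobolev embedding into the appropriate $H^{k+2}$, $H^{k+1}$, $H^k$ norm. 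This is a clean way of writing out what the paper's terse ``induction'' is hiding, and your careful bookkeeping of the minimal Sobolev indices needed to absorb both the top-order factor and the $L^\infty$ copies (including the degenerate $k=1$ case for the $L^2$ bound) is exactly the right check. No gap.
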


\begin{lemma}\label{Lm-Auxil-Ineq}
  Let $\Omega(x) = \big{(} \tfrac{2 \phi (x)}{W(x)} , \tfrac{2 \psi(x)}{W(x)} , 1 - \tfrac{2}{W(x)} \big{)}$, where $W (x) = 1 + \phi^2(x) + \psi^2(x)$ and $( \phi(x) , \psi(x) ) \in \R^2$. For any fixed integer $k \geq 1$, there is a positive constant $C = C(k) > 0$ such that
  \begin{equation}\label{Auxil-Ineq-1}
    \begin{aligned}
      \| \nabla_x^k \Omega \|_{L^4} \leq &  C(k) ( \| \nabla_x \phi \|_{H^{k }} + \| \nabla_x \psi \|_{H^{k }} ) \sum_{I=1}^{k } ( \| \phi \|_{H^{k }} + \| \psi \|_{H^{k }} )^{I-1}\,, \\
      \| \nabla_x^k \Omega \|_{L^2} \leq & C(k) ( \| \nabla_x \phi \|_{H^{k-1}} + \| \nabla_x \psi \|_{H^{k-1}} ) \sum_{I=1}^k ( \| \phi \|_{H^{k-1}} + \| \psi \|_{H^{k-1}} )^{I-1} \,, \\
      \| \nabla_x^k \Omega \|_{L^\infty} \leq & C(k) ( \| \nabla_x \phi \|_{H^{k+1}} + \| \nabla_x \psi \|_{H^{k+1}} ) \sum_{I=1}^{k } ( \| \phi \|_{H^{k+1}} + \| \psi \|_{H^{k+1}} )^{I-1}\,.
    \end{aligned}
  \end{equation}
  Moreover, the inequality
  \begin{equation}\label{Auxil-Ineq-2}
    \begin{aligned}
       \| \nabla_x^k W^2 \|_{L^4} \leq &  C(k) ( 1 + \| \phi \|^2_{H^k} + \| \psi \|^2_{H^k} + \| \phi \|^2_{H^3} + \| \psi \|^2_{H^3} ) \\
       & \quad \times ( \| \nabla_x \phi \|_{H^k}  \| \phi \|_{H^k} + \| \nabla_x \psi \|_{H^k} \| \psi \|_{H^k}  )
    \end{aligned}
  \end{equation}
  holds for any fixed $k \geq 2$, and the following inequality
  \begin{equation}\label{Auxil-Ineq-3}
    \begin{aligned}
      \| \nabla_x^k W^2 \|_{L^2} \leq & C(k) ( 1 + \| \phi \|^2_{H^2} + \| \psi \|^2_{H^2} + \| \phi \|^2_{H^{k-1}} + \| \psi \|^2_{H^{k-1}} ) \\
      & \ \ \times ( \| \phi \|_{H^2} + \| \psi \|_{H^2} + \| \phi \|_{H^{k-1}} + \| \psi \|_{H^{k-1}} ) ( \| \nabla_x \phi \|_{H^{k-1}} + \| \nabla_x \psi \|_{H^{k-1}} )
    \end{aligned}
  \end{equation}
  holds for any fixed $k \geq 1$.
\end{lemma}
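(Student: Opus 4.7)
The plan is to treat the components of $\Omega$ and of $W^2$ as smooth nonlinear functions of $(\phi,\psi)$ and reduce the claimed inequalities to Fa\`a di Bruno's chain rule (resp.\ the Leibniz rule), combined with H\"older's inequality, Gagliardo--Nirenberg interpolation, and the Sobolev embeddings $H^1(\R^3)\hookrightarrow L^4(\R^3)$ and $H^2(\R^3)\hookrightarrow L^\infty(\R^3)$. The crucial observation for $\Omega$ is that $|\Omega|=1$, so every entry of $\Omega$ and of all its $(\phi,\psi)$-partial derivatives $\partial_\phi^a\partial_\psi^b\Omega$ is a bounded, smooth, rational function of $(\phi,\psi)$ with denominator a power of $W\geq 1$; in particular no blow-up prefactors arise when composition estimates are applied.

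For \eqref{Auxil-Ineq-1}, I would expand $\nabla_x^k \Omega$ by Fa\`a di Bruno into a finite sum of monomials of the form $(\partial_\phi^a\partial_\psi^b\Omega)(\phi,\psi)\prod_{i=1}^{I}\nabla_x^{k_i}u_i$, where each $u_i\in\{\phi,\psi\}$, $I=a+b\in\{1,\dots,k\}$, $k_i\geq 1$, and $\sum_i k_i=k$. I would then isolate one distinguished factor $\nabla_x^{k_1}u_1$, estimate it in the target space $L^2$ (resp.\ $L^4$, $L^\infty$), and close the remaining $I-1$ factors in conjugate H\"older exponents; Gagliardo--Nirenberg in three dimensions controls each $\|\nabla_x^{k_i}u_i\|_{L^{p_i}}$ by interpolation between $\|u_i\|_{L^\infty}$ and a Sobolev norm. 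Absorbing the bounded chain-rule prefactor $\partial_\phi^a\partial_\psi^b\Omega$ and summing over $I=1,\dots,k$ produces exactly the polynomial structure $\sum_{I=1}^{k}(\|\phi\|_{H^m}+\|\psi\|_{H^m})^{I-1}$ times one linear factor in $\nabla_x\phi$ or $\nabla_x\psi$, with the index $m$ determined by the Sobolev embedding used: $m=k-1$ for the $L^2$ bound, $m=k$ for the $L^4$ bound (via $H^1\hookrightarrow L^4$), and $m=k+1$ for the $L^\infty$ bound (via $H^2\hookrightarrow L^\infty$).

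For \eqref{Auxil-Ineq-2} and \eqref{Auxil-Ineq-3}, $W^2=(1+\phi^2+\psi^2)^2$ is a polynomial of degree four, so no chain rule is needed: the Leibniz rule expresses $\nabla_x^k W^2$ as a finite sum of monomials built from at most four factors drawn from $\{\phi,\psi\}$ and their derivatives, and every such monomial contains at least one factor $\nabla_x\phi$ or $\nabla_x\psi$. Pulling out this distinguished factor and handling the remaining factors by H\"older and Gagliardo--Nirenberg as above produces, for the $L^2$ case, a single prefactor $\|\nabla_x\phi\|_{H^{k-1}}+\|\nabla_x\psi\|_{H^{k-1}}$ multiplied by the polynomial in $\|\phi\|_{H^{k-1}}+\|\psi\|_{H^{k-1}}$; for the $L^4$ case, after paying one extra derivative for the embedding, the stated product structure $\|\nabla_x\phi\|_{H^k}\|\phi\|_{H^k}+\|\nabla_x\psi\|_{H^k}\|\psi\|_{H^k}$ emerges because the relevant monomials of the polynomial $W^2$ must carry at least one undifferentiated $\phi$ or $\psi$ in addition to the distinguished derivative factor.

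The main technical obstacle is the bookkeeping: for each monomial produced by Fa\`a di Bruno or Leibniz, one must choose \emph{which} derivative factor to single out and \emph{which} Sobolev--Lebesgue exponent to assign to each remaining factor so that the final sum collapses to the precise polynomial form claimed. Once a consistent Gagliardo--Nirenberg scheme is fixed (with $\sum_i 1/p_i$ equal to $1/2$, $1/4$, or $0$ depending on the target space), all monomials admit the same bound and the polynomial degree simply counts the number of $\phi,\psi$ factors present; summing over the finitely many multi-index patterns then yields \eqref{Auxil-Ineq-1}--\eqref{Auxil-Ineq-3}.
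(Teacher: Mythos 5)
Your proposal follows essentially the same route as the paper: the Fa\`a di Bruno expansion of $\nabla_x^k\Omega$ together with the key observation that all $(\phi,\psi)$-partial derivatives of $\Omega$ are uniformly bounded, a Leibniz expansion for $\nabla_x^k W^2$, and then H\"older plus the Sobolev embeddings $H^1(\R^3)\hookrightarrow L^4(\R^3)$ and $H^2(\R^3)\hookrightarrow L^\infty(\R^3)$ to close each monomial. The only presentational differences are that the paper organizes the $\Omega$ estimates as an induction on $k$ and writes $\nabla_x^k W^2$ as $\nabla_x^k(W\cdot W)$ rather than treating $W^2$ directly as a quartic polynomial in $(\phi,\psi)$, but the resulting estimates and bookkeeping are the same.
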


\begin{proof}[Proof of Lemma \ref{Lm-rho-L^infty}.]
 (1). We consider the flow
  \begin{equation*}
    \left\{
      \begin{array}{l}
        \tfrac{\d}{\d t} X (t,x) = U ( t, X ( t , x ) ) \,, \\
        X(0,x) = x \,.
      \end{array}
    \right.
  \end{equation*}
  Then we have
  \begin{equation*}
    \begin{aligned}
      \tfrac{\d}{\d t} \widehat{\rho} ( t , X(t,x) ) = & \partial_t \widehat{\rho} ( t , X(t,x) ) + ( U \cdot \nabla_x \widehat{\rho} ) ( t , X(t,x) ) \\
      = & - f ( t , X(t,x) ) \,.
    \end{aligned}
  \end{equation*}
  Integrating the above equality on $[0,t]$, we gain
  \begin{equation*}
    \widehat{\rho} ( t , X(t,x) ) = \widehat{\rho}^{in} (x) -  \int_0^t f ( s , X(s,x) ) \d s \,,
  \end{equation*}
  and thus
  \begin{equation*}
    \begin{aligned}
      e^{\widehat{\rho} ( t , X(t,x) )} =  e^{\widehat{\rho}(x)} \exp \Big{(}  \int_0^t f ( s , X(s,x) ) \d s \Big{)} \leq  \bar{\rho} \exp \Big{(}   \int_0^t \| f(s, \cdot) \|_{L^\infty(\R^n)} \d s \Big{)} \,.
    \end{aligned}
  \end{equation*}

  (2). We observe that
  $$ \nabla_x^k e^{\widehat{\rho}} = e^{\widehat{\rho}} \sum_{I=1}^k \sum_{\sum\limits_{\alpha = 1}^I k_\alpha = k, k_\alpha \geq 1} \prod_{\alpha=1}^I \nabla_x^{k_\alpha} \widehat{\rho} \, \equiv \, e^{\widehat{\rho}} \sum_{I=1}^k \Theta^I (\widehat{\rho}) $$
  for any integer $k \geq 1$. Then the relation
  $$ \nabla_x^{k+1} e^{\widehat{\rho}} = e^{\widehat{\rho}} ( \nabla_x \widehat{\rho} \Theta^I(\widehat{\rho}) + \nabla_x \Theta^I(\widehat{\rho}) )  $$
  holds. So, by making use of H\"older inequality, Sobolev embedding $ H^2(\R^3) \hookrightarrow L^\infty(\R^3) $, $ H^1(\R^3) \hookrightarrow L^4(\R^3)$ and adopting the induction method, we can easily justify the inequalities \eqref{Bnds-rho-norms}. Consequently, the proof of Lemma \ref{Lm-rho-L^infty} is finished.

\end{proof}

\begin{proof}[Proof of Lemma \ref{Lm-Auxil-Ineq}.]
  Since each component of the unit vector field $\Omega$ is the form of a rational fraction with lower power polynomial of the factor than that of the denominator, we observe that for any integer $s \geq 1$, there is a constant $C = C(s) > 0$ such that
  $$ \sum_{s_1 + s_2 = s} \Big{|} \tfrac{\partial^s \Omega}{\partial \phi^{s_1} \partial \psi^{s_2}} ( \phi, \psi ) \Big{|} \leq \tfrac{C(s)}{W^\alpha} \leq C(s) $$
  for some $\alpha = \alpha(s) > 0$. By direct calculation, we know that
  \begin{equation}\label{Auxil-Ineq-4}
    \begin{aligned}
      \nabla_x^k \Omega = \sum_{I=1}^k \sum_{i+j=I} \frac{\partial^I \Omega}{\partial \phi^i \partial \psi^j} ( \phi, \psi ) \Upsilon_{ij} (\phi, \psi)\,,
    \end{aligned}
  \end{equation}
  where
  $$ \Upsilon_{ij} (\phi, \psi) = \sum_{ \substack{ \sum\limits_{\alpha = 1}^i k_{\alpha} + \sum\limits_{\beta = 1}^j l_{\beta} = k, \, k_{\alpha}, l_{\beta} \geq 1 }} \nabla_x^{k_1} \phi \cdots \nabla_x^{k_i} \phi \nabla_x^{l_1} \psi \cdots \nabla_x^{l_j} \psi \,.$$

  Now we justify the inequalities \eqref{Auxil-Ineq-1} by induction. For the case $k=1$, we have
  $$ |\nabla_x \Omega| \leq C(1) ( |\nabla_x \phi| + | \nabla_x \psi | ) \,. $$
  Then by Sobolev embedding $H^2(\R^3) \hookrightarrow L^\infty(\R^3)$ and $H^1 (\R^3) \hookrightarrow L^4(\R^3)$, we know that the inequalities \eqref{Auxil-Ineq-1} hold for $k=1$. Assume that the inequalities \eqref{Auxil-Ineq-1} hold for all $1 \leq i \leq k$. We check the inequalities \eqref{Auxil-Ineq-1} for the case $k+1$. By the relation \eqref{Auxil-Ineq-4}, we know that
  \begin{equation*}
    \begin{aligned}
      \nabla_x^{k+1} \Omega = &  \sum_{I=1}^k \sum_{i+j=I} \bigg{(} \frac{\partial^{I+1} \Omega}{\partial \phi^{i+1} \partial \psi^j} ( \phi, \psi ) \nabla_x \phi +  \frac{\partial^{I+1} \Omega}{\partial \phi^i \partial \psi^{j+1}} ( \phi, \psi ) \nabla_x \psi \bigg{)} \Upsilon_{ij} (\phi, \psi) \\
      & + \sum_{I=1}^k \sum_{i+j=I} \frac{\partial^I \Omega}{\partial \phi^i \partial \psi^j} ( \phi, \psi ) \nabla_x \Upsilon_{ij} (\phi, \psi) \,,
    \end{aligned}
  \end{equation*}
which immediately yields that
  \begin{align}\label{Auxil-Ineq-5}
    | \nabla_x^{k+1} \Omega | \leq C(k+1) \sum_{I=1}^k \sum_{i+j=I} \big{[} ( |\nabla_x \phi| + |\nabla_x \psi| ) |\Upsilon_{ij} (\phi, \psi)| + |\nabla_x \Upsilon_{ij} (\phi, \psi)| \big{]}\,.
  \end{align}
  As a consequence, Sobolev embedding $H^2(\R^3) \hookrightarrow L^\infty(\R^3)$, $H^1 (\R^3) \hookrightarrow L^4(\R^3)$ and the assumption of the case $k$ in the induction reduce to the conclusion of the case $k+1$, which means that the inequalities \eqref{Auxil-Ineq-1} holds for all integer $k \geq 1$.

  Next, our goal is to verify the inequality \eqref{Auxil-Ineq-2} for $k \geq 2$. We observe that
  \begin{equation}\label{Auxil-Relt-W-1}
    \begin{aligned}
      \| \nabla_x^k W^2 \|_{L^4} = & 2 \| W \nabla_x^k W \|_{L^4} + 2 \| \nabla_x W \nabla_x^{k-1} W \|_{L^4} + \sum_{\substack{a+b=k \\ a,b \geq 2}} \| \nabla_x^a W \nabla_x ^b W \|_{L^4} \\
      \equiv & \, A_1 + A_2 + A_3 \,.
    \end{aligned}
  \end{equation}
  We estimate the quantities $A_i (i=1,2,3)$ term by term by making use of H\"older inequality and Sobolev embedding theory. First, we have by the definition of $W$
  \begin{equation}\label{Auxil-Relt-W-2}
    \begin{aligned}
      A_1 \lesssim & ( 1 + \| \phi \|^2_{L^\infty} + \| \psi \|^2_{L^\infty}   ) \sum_{a+b=k} \| \nabla_x^a \phi \nabla_x^b \psi + \nabla_x^a \psi \nabla_x^b \psi \|_{L^4} \\
      \lesssim & ( 1 + \| \phi \|^2_{H^2} + \| \psi \|^2_{H^2} ) \Big{[} \| \phi \|_{L^\infty} \| \nabla_x^k \phi \|_{L^4} + \| \psi \|_{L^\infty} \| \nabla_x^k \psi \|_{L^4} \\
      & \quad + \sum_{\substack{a+b=k \\ a,b \geq 1}} ( \| \nabla_x^a \phi \|_{L^\infty} \| \nabla_x^b \phi \|_{L^4} + \| \nabla_x^a \psi \|_{L^\infty} \| \nabla_x^b \psi \|_{L^4} ) \Big{]} \\
      \lesssim & ( 1 + \| \phi \|^2_{H^k} + \| \psi \|^2_{H^k} ) \big{[} \| \nabla_x \phi \|_{H^k} \| \phi \|_{H^k}  + \| \nabla_x \psi \|_{H^k}  \| \psi \|_{H^k}  \big{]}\
    \end{aligned}
  \end{equation}
  for $k \geq 2$, and
  \begin{equation}\label{Auxil-Relt-W-3}
    \begin{aligned}
      A_2 \lesssim & \| \nabla_x W \|_{L^\infty} \| \nabla_x^{k-1} W \|_{L^4} \\
      \lesssim & ( \| \phi \|_{L^\infty} \| \nabla_x \phi \|_{L^\infty} + \| \psi \|_{L^\infty} \| \nabla_x \psi \|_{L^\infty} ) \sum_{a+b=k-1} \| \nabla_x^a \phi \nabla_x^b \phi + \nabla_x^a \psi \nabla_x^b \psi \|_{L^4} \\
      \lesssim & ( \| \phi \|^2_{H^3} + \| \psi \|^2_{H^3} ) \Big{[} \| \phi \|_{L^\infty} \| \nabla_x^{k-1} \phi \|_{L^4} + \| \psi \|_{L^\infty} \| \nabla_x^{k-1} \psi \|_{L^4} \\
      & \quad + \sum_{\substack{a+b=k-1 \\ a,b \geq 1}} ( \| \nabla_x^a \phi \|_{L^\infty} \| \nabla_x^b \phi \|_{L^4} + \| \nabla_x^a \psi \|_{L^\infty} \| \nabla_x^b \psi \|_{L^4} ) \Big{]} \\
      \lesssim & ( \| \phi \|^2_{H^3} + \| \psi \|^2_{H^3} ) ( \| \nabla_x \phi \|_{H^k} \| \phi \|_{H^k} + \| \nabla_x \phi \|_{H^k} \| \phi \|_{H^k} ) \,,
    \end{aligned}
  \end{equation}
  and
    \begin{align}\label{Auxil-Relt-W-4}
      A_3
    \lesssim & \sum_{\substack{a+b=k \\ a,b \geq 2}} \| \nabla_x^a W \|_{L^\infty} \| \nabla_x ^b W \|_{L^4}
    \no\\[3pt]
      \lesssim & \sum_{\substack{a+b=k \\ a,b \geq 2}} \sum_{a_1+a_2=a} ( \| \nabla_x^{a_1} \phi \|_{L^\infty} \| \nabla_x^{a_2} \phi \|_{L^\infty} + \| \nabla_x^{a_1} \psi \|_{L^\infty} \| \nabla_x^{a_2} \psi \|_{L^\infty} ) \| \nabla_x^b W \|_{L^4}
    \\[3pt] \no
      \lesssim & \sum_{\substack{a+b=k \\ 2 \leq b \leq k-2}} ( \| \phi \|^2_{H^{a+2}} + \| \psi \|^2_{H^{a+2}} ) \| \nabla_x^b W \|_{L^4}
    \\[3pt] \no
      \lesssim & ( \| \phi \|^2_{H^k} + \| \psi \|^2_{H^k} ) \sum_{2 \leq b \leq k-2 } \sum_{b_1+b_2=b} \| \nabla_x^{b_1} \phi \nabla_x^{b_2} \phi + \nabla_x^{b_1} \psi \nabla_x^{b_2} \psi \|_{L^4}
    \\[3pt] \no
      \lesssim & ( \| \phi \|^2_{H^k} + \| \psi \|^2_{H^k} ) \sum_{2 \leq b \leq k-2 } \sum_{b_1+b_2=b} ( \| \nabla_x^{b_1} \phi \|_{L^\infty} \| \nabla_x^{b_2} \phi \|_{L^4} + \| \nabla_x^{b_1} \psi \|_{L^\infty} \| \nabla_x^{b_2} \psi \|_{L^4} )
    \\[3pt] \no
      \lesssim & ( \| \phi \|^2_{H^k} + \| \psi \|^2_{H^k} ) ( \| \nabla_x \phi \|_{H^k} \| \phi \|_{H^k} + \| \nabla_x \psi \|_{H^k} \| \psi \|_{H^k} ) \,.
    \end{align}
  Consequently, the inequality \eqref{Auxil-Ineq-2} is derived from plugging the inequalities \eqref{Auxil-Relt-W-2}, \eqref{Auxil-Relt-W-3} and \eqref{Auxil-Relt-W-4} into the relation \eqref{Auxil-Relt-W-1}.

  In the end, we derive the last inequality \eqref{Auxil-Ineq-3} in this lemma. By H\"older inequality and Sobolev embedding $H^2(\R^3) \hookrightarrow L^\infty(\R^3)$ and $H^1 (\R^3) \hookrightarrow L^4(\R^3)$, we gain that for $k \geq 1$,
  \begin{equation}\label{Auxil-Relt-W-5}
    \begin{aligned}
      \| \nabla_x W^2 \|_{L^2} \lesssim & \| W \|_{L^\infty} \| \nabla_x^k W \|_{L^2} + \sum_{\substack{k_1+k_2=k \\ k_1, k_2 \geq 1}} \| \nabla_x^{k_1} W \nabla_x^{k_2} W \|_{L^2} \\
      \lesssim & ( 1 + \| \phi \|^2_{H^2} + \| \psi \|^2_{H^2} ) \sum_{k_1+k_2=k} ( \| \nabla_x^{k_1} \phi  \nabla_x^{k_2} \phi  + \nabla_x^{k_1} \psi  \nabla_x^{k_2} \psi \|_{L^2} ) \\
      + & \sum_{\substack{k_1+k_2=k \\ k_1, k_2 \geq 1}} \sum_{a+b=k_1} \| \nabla_x^a \phi \nabla_x^b \phi + \nabla_x^a \psi \nabla_x^b \psi \|_{L^4} \sum_{p+q=k_2} \| \nabla_x^p \phi \nabla_x^q \phi + \nabla_x^p \psi \nabla_x^q \psi \|_{L^4} \\
      \equiv & B_1 + B_2 \,.
    \end{aligned}
  \end{equation}
  For the term $B_1$, we calculate that
    \begin{align}\label{Auxil-Relt-W-6}
      \no B_1 \lesssim & ( 1 + \| \phi \|^2_{H^2} + \| \psi \|^2_{H^2} ) ( \| \phi \|_{L^\infty} \| \nabla_x^k \phi \|_{L^2} + \| \psi \|_{L^\infty} \| \nabla_x^k \psi \|_{L^2} ) \\
      \no & + ( 1 + \| \phi \|^2_{H^2} + \| \psi \|^2_{H^2} ) \sum_{\substack{k_1+k_2=k \\ k_1, k_2 \geq 1}} ( \| \nabla_x^{k_1} \phi \|_{L^4} \| \nabla_x^{k_2} \phi \|_{L^4} + \| \nabla_x^{k_1} \psi \|_{L^4} \| \nabla_x^{k_2} \psi \|_{L^4} ) \\
      \no \lesssim & ( 1 + \| \phi \|^2_{H^2} + \| \psi \|^2_{H^2} ) ( \| \phi \|_{H^2} \| \nabla_x \phi \|_{H^{k-1}} + \| \psi \|_{H^2} \| \nabla_x \psi \|_{H^{k-1}} ) \\
      \no & + ( 1 + \| \phi \|^2_{H^2} + \| \psi \|^2_{H^2} ) \sum_{\substack{k_1+k_2=k \\ k_1, k_2 \geq 1}} ( \| \nabla_x^{k_1} \phi \|_{H^1} \| \nabla_x^{k_2} \phi \|_{H^1} + \| \nabla_x^{k_1} \psi \|_{H^1} \| \nabla_x^{k_2} \psi \|_{H^1} ) \\
      \lesssim & ( 1 + \| \phi \|^2_{H^2} + \| \psi \|^2_{H^2} ) ( \| \phi \|_{H^2} + \| \psi \|_{H^2} + \| \phi \|_{H^{k-1}} + \| \psi \|_{H^{k-1}} ) \\
      \no & \qquad \qquad \times ( \| \nabla_x \phi \|_{H^{k-1}} + \| \nabla_x \psi \|_{H^{k-1}} ) \,.
    \end{align}
    We notice that
    \begin{equation*}
      \begin{aligned}
        & \sum_{a+b=k_1} \| \nabla_x^a \phi \nabla_x^b \phi \|_{L^4} \lesssim  \| \phi \|_{L^\infty} \| \nabla_x^{k_1} \phi \|_{L^4} + \sum_{\substack{a+b=k_1 \\ a,b \geq 1}} \| \nabla_x^a \phi \|_{L^\infty} \| \nabla_x^b \phi \|_{L^4} \\
        \lesssim & \| \phi \|_{H^2} \| \nabla_x^{k_1} \|_{H^1} + \sum_{\substack{a+b=k_1 \\ a,b \geq 1}} \| \nabla_x^a \phi \|_{H^2} \| \nabla_x^b \phi \|_{H^1} \lesssim  ( \| \phi \|_{H^2} + \| \phi \|_{H^{k_1}} ) \| \nabla_x \phi \|_{H^{k_1}} \,,
      \end{aligned}
    \end{equation*}
    which immediately deduces us that
    \begin{equation}\label{Auxil-Relt-W-7}
      \begin{aligned}
        B_2 \lesssim & \sum_{\substack{k_1+k_2=k \\ k_1 , k_2 \geq 1}} \big{[} ( \| \phi \|_{H^2} + \| \phi \|_{H^{k_1}} ) \| \nabla_x \phi \|_{H^{k_1}} + ( \| \psi \|_{H^2} + \| \psi \|_{H^{k_1}} ) \| \nabla_x \psi \|_{H^{k_1}} \big{]} \\
        & \quad \times \big{[} ( \| \phi \|_{H^2} + \| \phi \|_{H^{k_2}} ) \| \nabla_x \phi \|_{H^{k_2}} + ( \| \psi \|_{H^2} + \| \psi \|_{H^{k_2}} ) \| \nabla_x \psi \|_{H^{k_2}} \big{]} \\
        \lesssim & ( \| \phi \|_{H^2} + \| \phi \|_{H^{k-1}} + \| \psi \|_{H^2} + \| \psi \|_{H^{k-1}} )^2 \\
         & \qquad \times ( \| \phi \|_{H^{k-1}} + \| \psi \|_{H^{k-1}} ) ( \| \nabla_x \phi \|_{H^{k-1}} + \| \nabla_x \phi \|_{H^{k-1}} )\,.
      \end{aligned}
    \end{equation}
    Then, we substitute the inequalities \eqref{Auxil-Relt-W-6} and \eqref{Auxil-Relt-W-7} into the inequality \eqref{Auxil-Relt-W-5}, and consequently, we finish the proof of Lemma \ref{Lm-Auxil-Ineq}.
\end{proof}

\begin{remark}
  The inequalities \eqref{Auxil-Ineq-1} in Lemma \ref{Lm-Auxil-Ineq} still holds if the vector field $\Omega$ is replaced by the vector field $\Omega_\phi$ or $\Omega_\psi$.
\end{remark}

Based on the conclusions in Lemma \ref{Lm-rho-L^infty} and the inequalities in Lemma \ref{Lm-Auxil-Ineq}, we now can give the proof of Proposition \ref{Prop-Apr-Est-3D}.
\begin{proof}[Proof of Proposition \ref{Prop-Apr-Est-3D}.]
In the proof of this proposition, we mainly make use of the H\"older inequality, Sobolev embedding $H^2(\R^3) \hookrightarrow L^\infty(\R^3)$ and $H^1 (\R^3) \hookrightarrow L^4(\R^3)$. In order to finish the proof of this conclusion, we derive the energy inequality of each equation of the SOH-NS system \eqref{SOH-NS-3D-SPT-Smp}, respectively. In other word, we can divide this proof into five steps, where the last step is to close the energy estimate by summing up for all energy estimates obtained in the previous four steps.

{\smallskip\noindent \em\large Step 1. Energy estimate for $\widehat{\rho}$-equation.}

For all integer $0 \leq k \leq s$, we first act the derivative operator $\nabla_x^k$ on the first $\widehat{\rho}$-equation in the system \eqref{SOH-NS-3D-SPT-Smp}, and then multiply by $\nabla_x^k \widehat{\rho}$ and integrate by parts on $\R^3$, then we gain
\begin{equation}\label{Apriori-Est-3D-rho-1}
  \begin{aligned}
    \tfrac{1}{2} \tfrac{\d}{\d t} \| \nabla_x^k \widehat{\rho} \|^2_{L^2} = & - a c_1 \l \nabla_x^k ( \Omega \cdot \nabla_x \widehat{\rho} ), \nabla_x^k \widehat{\rho} \r - \l \nabla_x^k ( v \cdot \nabla_x \widehat{\rho} ) , \nabla_x^k \widehat{\rho} \r \\
    & - a c_1 \l \nabla_x^k ( \Omega_\phi \cdot \nabla_x \phi ), \nabla_x^k \widehat{\rho} \r - a c_1 \l \nabla_x^k ( \Omega_\psi \cdot \nabla_x \psi ), \nabla_x^k \widehat{\rho} \r \\
    \equiv & \, D_1 + D_2 + D_3 + D_4 \,,
  \end{aligned}
\end{equation}
where we utilize the relation $U = a c_1 \Omega + v$. Now we compute the terms $D_i \, (1 \leq i \leq 4)$ term by term. For the term $D_1$, we observe that
  \begin{align}\label{Apriori-Est-3D-rho-2}
    \no D_1 = & -a c_1 \l \Omega \cdot \nabla_x \nabla_x^k \widehat{\rho}, \nabla_x^k \widehat{\rho} \r - a c_1 \l \nabla_x \Omega \cdot \nabla_x^k \widehat{\rho}, \nabla_x^k \widehat{\rho} \r \\
    \no & \qquad \qquad- a c_1 \sum_{\substack{k_1+k_2=k \\ k_1 \geq 2}} \l \nabla_x^{k_1} \Omega \cdot \nabla_x^{k_2 + 1} \widehat{\rho}, \nabla_x^k \widehat{\rho} \r \\
     = & \tfrac{1}{2} a c_1 \l \nabla_x \cdot \Omega , |\nabla_x^k \widehat{\rho}|^2 \r  - a c_1 \l \nabla_x \Omega \cdot \nabla_x^k \widehat{\rho}, \nabla_x^k \widehat{\rho} \r \\
    \no & \qquad \qquad - a c_1 \sum_{\substack{k_1+k_2=k \\ k_1 \geq 2}} \l \nabla_x^{k_1} \Omega \cdot \nabla_x^{k_2 + 1} \widehat{\rho}, \nabla_x^k \widehat{\rho} \r \\
    \no \lesssim & |a c_1|  \l |\nabla_x \Omega|, |\nabla_x^k \widehat{\rho}|^2 \r + | a c_1 | \sum_{\substack{k_1+k_2=k \\ k_1 \geq 2}} \l | \nabla_x^{k_1} \Omega | | \nabla_x^{k_2 + 1} \widehat{\rho} | , | \nabla_x^k \widehat{\rho} |  \r \\
    \no \equiv & \, D_{11} + D_{12}\,.
  \end{align}
It is easily derived from the H\"older inequality, Sobolev embedding theory and the inequalities in Lemma \ref{Lm-Auxil-Ineq} that
\begin{equation}\label{Apriori-Est-3D-rho-3}
  \begin{aligned}
    D_{11} \lesssim & | a c_1 | \l |\Omega_\phi| |\nabla_x \phi| + |\Omega_\psi| |\nabla_x \psi |, |\nabla_x^k \widehat{\rho}|^2 \r \lesssim  |a c_1| ( \| \nabla_x \phi \|_{L^\infty} + \| \nabla_x \psi \|_{L^\infty} ) \| \nabla_x^k \widehat{\rho} \|^2_{L^2} \\
    \lesssim & | a c_1 | ( \| \nabla_x \phi \|_{H^s} + \| \nabla_x \psi \|_{H^s} ) \| \widehat{\rho} \|^2_{H^s}
  \end{aligned}
\end{equation}
if $s \geq 2$, and
\begin{equation}\label{Apriori-Est-3D-rho-4}
  \begin{aligned}
    D_{12} \lesssim & |a c_1| \sum_{\substack{k_1+k_2=k \\ k_1 \geq 2}} \| \nabla_x^{k_1} \Omega \|_{L^4} \| \nabla_x^{k_2 + 1} \|_{L^4} \| \nabla_x^k \widehat{\rho} \|_{L^2} \\
    \lesssim & |a c_1| \sum_{\substack{k_1+k_2=k \\ k_1 \geq 2}} ( \| \nabla_x \phi \|_{H^{k_1}} + \| \psi \|_{H^{k_1}} ) \sum_{I=1}^{k_1}( \| \phi \|_{H^{k_1}} + \| \psi \|_{H^{k_1}} )^{I-1} \| \nabla_x^{k_2 + 1} \widehat{\rho} \|_{H^1} \| \widehat{\rho} \|_{H^s} \\
    \lesssim & |a c_1| ( \| \nabla_x \phi \|_{H^s} + \| \nabla_x \psi \|_{H^s} ) \| \widehat{\rho} \|^2_{H^s} \big{[} 1 + ( \| \phi \|_{H^s} + \| \psi \|_{H^s} )^{s-1} \big{]}\,.
  \end{aligned}
\end{equation}
Thus, by plugging the inequalities \eqref{Apriori-Est-3D-rho-3} and \eqref{Apriori-Est-3D-rho-4} into the equality \eqref{Apriori-Est-3D-rho-2}, we immediately know that
\begin{equation}\label{Apriori-Est-3D-rho-5}
  D_1 \lesssim |a c_1| ( \|\nabla_x \phi \|_{H^s} + \| \nabla_x \psi \|_{H^s} ) \| \widehat{\rho} \|^2_{H^s} \big{[} 1 + ( \| \phi \|_{H^s} + \| \psi \|_{H^s} )^{s-1} \big{]} \,.
\end{equation}

For the term $D_2$, it is easy to be calculated that for $s \geq 2$
\begin{equation}\label{Apriori-Est-3D-rho-6}
  \begin{aligned}
    D_2 = & -  \l \nabla_x v \nabla_x^k \widehat{\rho}, \nabla_x^{k} \widehat{\rho} \r - \sum_{\substack{k_1+k_2=k \\ k_1 \geq 2}} \l \nabla_x^{k_1} v \nabla_x^{k_2+1} \widehat{\rho} , \nabla_x^k \widehat{\rho} \r \\
    \lesssim & \| \nabla_x v \|_{L^\infty} \| \nabla_x^k \widehat{\rho} \|^2_{L^2} + \sum_{\substack{k_1+k_2=k \\ k_1 \geq 2}} \| \nabla_x^{k_1} v \|_{L^4} \| \nabla_x^{k_2+1} \widehat{\rho} \|_{L^4} \| \nabla_x^k \widehat{\rho} \|_{L^2} \\
    \lesssim & \| \nabla_x v \|_{H^s} \| \widehat{\rho} \|^2_{H^s} \,.
  \end{aligned}
\end{equation}
Here we utilize the fact $\nabla_x \cdot v = 0$. Similar arguments in the estimation of the inequality \eqref{Apriori-Est-3D-rho-5} on the term $D_1$ deduce to that
\begin{equation}\label{Apriori-Est-3D-rho-7}
  \begin{aligned}
    D_3 + D_4 \lesssim |a c_1| ( \| \nabla_x \phi \|_{H^s} + \| \nabla_x \psi \|_{H^s} ) \| \widehat{\rho} \|_{H^s} \big{[} 1 + ( \| \phi \|_{H^s} + \| \psi \|_{H^s} )^s \big{]} \,.
  \end{aligned}
\end{equation}
Consequently, by substituting the estimations \eqref{Apriori-Est-3D-rho-5}, \eqref{Apriori-Est-3D-rho-6} and \eqref{Apriori-Est-3D-rho-7} into the equality \eqref{Apriori-Est-3D-rho-1} and summing up for all integers $0 \leq k \leq s$, we gain the energy estimate of the $\widehat{\rho}$-equation of the SOH-NS system \eqref{SOH-NS-3D-SPT-Smp}:
  \begin{align}\label{Apriori-Est-3D-rho}
    	\tfrac{\d}{\d t} \| \widehat{\rho} \|^2_{H^s}
  	\lesssim & \| \nabla_x v \|_{H^s} \| \widehat{\rho} \|^2_{H^s} + |a c_1| ( \| \nabla_x \phi \|_{H^s} + \| \nabla_x \psi \|_{H^s} ) \| \widehat{\rho} \|^2_{H^s} \big{[} 1 + ( \| \phi \|_{H^s} + \| \psi \|_{H^s} )^{s-1} \big{]}
  \no\\
    & + |a c_1| ( \| \nabla_x \phi \|_{H^s} + \| \nabla_x \psi \|_{H^s} ) \| \widehat{\rho} \|_{H^s} \big{[} 1 + ( \| \phi \|_{H^s} + \| \psi \|_{H^s} )^{s} \big{]} \,.
  \end{align}

{\smallskip \noindent \em \large Step 2. Energy estimate for $\phi$-equation.}

For all integers $0 \leq k \leq s$, taking $L^2$-inner product with $\nabla_x^k \phi$ in the second $\phi$-equation in the SOH-NS system \eqref{SOH-NS-3D-SPT-Smp} after acting the $k$-order derivative operator $\nabla_x^k$ on that, and integrating by parts on $\R^3$, one will derive that
  \begin{align}\label{Apriori-Est-3D-phi-1}
    \no \tfrac{1}{2} \tfrac{\d}{\d t} \| \nabla_x^k \phi \|^2_{L^2} = & - a c_2 \l \Omega \cdot \nabla_x^{k+1} \phi , \nabla_x^k \r - a c_2 \l \nabla_x \Omega \cdot \nabla_x^k \phi, \nabla_x^k \phi \r - \l \nabla_x v \nabla_x^k \phi , \nabla_x^k \phi \r \\
     & - a c_2 \sum_{\substack{k_1+k_2=k \\ k_1 \geq 2}} \l \nabla_x^{k_1} \Omega \nabla_x^{k_2+1} \phi , \nabla_x^k \phi \r - \sum_{\substack{k_1+k_2=k \\ k_1 \geq 2}} \l \nabla_x^{k_1} v \nabla_x^{k_2+1} \phi , \nabla_x^k \phi \r \\
    \no & \qquad - \l \nabla_x^k \mathcal{H}_\phi ( \widehat{\rho}, \phi, \psi, v ), \nabla_x^k \phi \r \\
    \no \equiv & \, E_1 + E_2 + E_3 + E_4 + E_5  + E_6\,,
  \end{align}
where we use the relation $V = a c_2 \Omega + v$ and the incompressibility $\nabla_x \cdot v = 0$. For the first three terms $E_1$, $E_2$ and $E_3$, which are of simple form, one can estimate that by the H\"older inequality and Sobolev embedding $H^2(\R^3) \hookrightarrow L^\infty(\R^3)$ and $H^1 (\R^3) \hookrightarrow L^4(\R^3)$
\begin{equation}\label{Apriori-Est-3D-phi-2}
  \begin{aligned}
    E_1 + E_2 & + E_3 \lesssim |a c_2| \| \nabla_x^{k+1} \phi \|_{L^2} \| \nabla_x^k \phi \|_{L^2} \\
    & + |a c_2| (\| \nabla_x \phi \|_{L^\infty} + \| \nabla_x \psi \|_{L^\infty} ) \| \nabla_x^k \phi \|^2_{L^2} + \| \nabla_x v \|_{L^\infty} \| \nabla_x^k \phi \|^2_{L^2} \\
    \lesssim & |a c_2| \| \nabla_x \phi \|_{H^s} \| \phi \|_{H^s} + |a c_2| ( \| \nabla_x \phi \|_{H^s} + \| \nabla_x \psi \|_{H^s} ) \| \phi \|^2_{H^s} + \| \nabla_x v\|_{H^s} \| \phi \|^2_{H^s} \,.
  \end{aligned}
\end{equation}
The forth and fifth terms $E_4$, $E_5$ are easily estimated as follows:
\begin{equation}\label{Apriori-Est-3D-phi-3}
  \begin{aligned}
    E_4 +&  E_5 \lesssim |a c_2| \sum_{\substack{k_1+k_2=k \\ k_1 \geq 2}} \| \nabla_x^{k_1} \Omega \|_{L^4} \| \nabla_x^{k_2+1} \phi \|_{L^4} \| \nabla_x^k \phi \|_{L^2} \\
    & + \sum_{\substack{k_1+k_2=k \\ k_1 \geq 2}} \| \nabla_x^{k_1} v \|_{L^4} \| \nabla_x^{k_2+1} \phi \|_{L^4} \| \nabla_x^k \phi \|_{L^2} \\
    \lesssim & |a c_2| ( \| \nabla_x \phi \|_{H^s} + \| \nabla_x \psi \|_{H^s} ) \| \phi \|^2_{H^s} \big{[} 1 + ( \| \phi \|_{H^s} + \| \psi \|_{H^s} )^{s-1} \big{]} + \| \nabla_x v \|_{H^s} \| \phi \|^2_{H^s} \,,
  \end{aligned}
\end{equation}
where we make use of the H\"older inequality, Sobolev embedding theory and the inequalities \eqref{Auxil-Ineq-1} in Lemma \ref{Lm-Auxil-Ineq}.

Now we estimate the term $E_6$, which is the hardest work in this proof. By the definition of the quantity $\mathcal{H}_\phi (\widehat{\rho}, \phi, \psi, v)$, we decompose the term $E_6$ as
\begin{equation}\label{Apriori-Est-3D-phi-4}
  \begin{aligned}
    E_6 = & - \gamma \| \nabla_x^{k+1} \phi \|^2_{L^2} - \tfrac{a}{4 \kappa} \l \nabla_x^k ( W^2 \Omega_\phi \cdot \nabla_x \widehat{\rho} ), \nabla_x^k \phi \r + 2 \gamma \l \nabla_x^k ( \nabla_x \widehat{\rho} \cdot \nabla_x \phi ) \r \\
    & - 2 \gamma \l \nabla_x^k ( \tfrac{\phi}{W} | \nabla_x \phi |^2 ) , \nabla_x^k \phi \r - 4 \gamma \l \nabla_x^k ( \tfrac{\psi}{W} \nabla_x \phi \cdot \nabla_x \psi ), \nabla_x^k \phi \r \\
    & + 2 \gamma \l \nabla_x^k( \tfrac{\phi}{W} | \nabla_x \psi |^2 ) , \nabla_x^k \phi \r+ \tfrac{1}{4} \l \nabla_x^k [ W^2 (\widetilde{\lambda} S(v) + A(v) ) : \Omega_\phi \otimes \Omega ] , \nabla_x^k \phi \r \\
    \equiv & \, - \gamma \| \nabla_x^{k+1} \phi \|^2_{L^2} + E_{61} + E_{62} + E_{63} + E_{64} + E_{65} + E_{66}\,.
  \end{aligned}
\end{equation}
Now we estimate the terms $E_{6i} \, (1 \leq i \leq 6)$. We first compute the term $E_{61}$ by integrating by parts over $\R^3$ as follows.
  \begin{align}\label{Apriori-Est-3D-phi-5}
    \no E_{61} = & \tfrac{a}{4 \kappa} \l \nabla_x \cdot ( W^2 \Omega_\phi ) \nabla_x^k \widehat{\rho} , \nabla_x^k \phi \r + \tfrac{a}{4 \kappa} \l W^2 \Omega_\phi \cdot \nabla_x^k \widehat{\rho} , \nabla_x^{k+1} \phi \r - \tfrac{a}{4 \kappa} \l \nabla_x ( W^2 \Omega_\phi ) \cdot \nabla_x^k \widehat{\rho} , \nabla_x^k \phi \r \\
    & - \tfrac{a}{4 \kappa} \sum_{\substack{k_1+k_2=k \\ k_1 \geq 2, k_2 \geq 1}} \l \nabla_x^{k_1} ( W^2 \Omega_\phi ) \nabla_x^{k_2+1} \widehat{\rho} , \nabla_x^k \phi \r - \tfrac{a}{4 \kappa} \l \nabla_x^k ( W^2 \Omega_\phi ) \nabla_x \widehat{\rho} , \nabla_x^k \phi \r \\
    \no \equiv & \, E_{611} + E_{612} + E_{613} + E_{614} + E_{615} \,.
  \end{align}
For $E_{611}$, one can derive from the H\"older inequality and Sobolev embedding $H^2(\R^3) \hookrightarrow L^\infty(\R^3)$ and $H^1 (\R^3) \hookrightarrow L^4(\R^3)$ that
\begin{equation}\label{Apriori-Est-3D-phi-6}
  \begin{aligned}
    E_{611} \lesssim & |\tfrac{a}{\kappa}| \| \nabla_x \cdot ( W^2 \Omega_\phi ) \|_{L^\infty} \| \nabla_x^k \widehat{\rho} \|_{L^2} \| \nabla_x^k \phi \|_{L^2} \\
    = & |\tfrac{a}{\kappa}| \| 2 W \nabla_x W \cdot \Omega_\phi + W^2 \Omega_{\phi \phi} \cdot \nabla_x \phi + W^2 \Omega_{\phi \psi} \cdot \nabla_x \psi \|_{L^\infty} \| \nabla_x^k \widehat{\rho} \|_{L^2} \| \nabla_x^k \phi \|_{L^2} \\
    \lesssim & |\tfrac{a}{\kappa}| ( \| W \|^{\frac{3}{2}}_{L^\infty} + \| W \|^2_{L^\infty} ) ( \| \nabla_x \phi \|_{L^\infty} + \| \nabla_x \psi \|_{L^\infty} ) \| \nabla_x^k \widehat{\rho} \|_{L^2} \| \nabla_x^k \phi \|_{L^2} \\
    \lesssim & |\tfrac{a}{\kappa}| ( 1 + \| \phi \|^4_{H^s} + \| \psi \|^4_{H^s} ) ( \| \nabla_x \phi \|_{H^s} + \| \nabla_x \psi \|_{H^s} ) \| \widehat{\rho} \|_{H^s} \| \phi \|_{H^s} \,.
  \end{aligned}
\end{equation}
Here we make use of the bounds $ |\Omega_\phi| + |\Omega_{\phi \phi }| + |\Omega_{\phi \psi}| \leq C $ and the inequality $\|W\|_{L^\infty} \leq C ( 1 + \| \phi \|^2_{H^2} + \| \psi \|^2_{H^2}  )$. For $E_{612}$, H\"older inequality, the bounds $|W \Omega_\phi| \leq C$ and Sobolev embedding $\|W\|_{L^\infty} \leq C ( 1 + \| \phi \|^2_{H^2} + \| \psi \|^2_{H^2}  )$ yield that
\begin{equation}\label{Apriori-Est-3D-phi-7}
  \begin{aligned}
    E_{612} \lesssim & |\tfrac{a}{\kappa}| \| W^2 \Omega_\phi \|_{L^\infty} \| \nabla_x^k \widehat{\rho} \|_{L^2} \| \nabla_x^{k+1} \phi \|_{L^2} \lesssim |\tfrac{a}{\kappa}| ( 1 +  \| \phi \|^2_{L^\infty} + \| \psi \|^2_{L^\infty} ) \| \widehat{\rho} \|_{H^s} \| \nabla_x \phi \|_{H^s} \\
    \lesssim & |\tfrac{a}{\kappa}| \| \nabla_x \phi \|_{H^s} \| \widehat{\rho} \|_{H^s} ( 1 +  \| \phi \|^2_{H^s} + \| \psi \|^2_{H^s} )\,.
  \end{aligned}
\end{equation}
By the similar estimation of $E_{611}$, one can easily gain the bound of the term $E_{613}$
\begin{equation}\label{Apriori-Est-3D-phi-8}
  \begin{aligned}
    E_{613} \lesssim |\tfrac{a}{\kappa}| ( \| \nabla_x \phi \|_{H^s} + \| \nabla_x \psi \|_{H^s} ) \| \widehat{\rho} \|_{H^s} \| \phi \|_{H^s} ( 1 + \| \phi \|^4_{H^s} + \| \psi \|^4_{H^s} )\,.
  \end{aligned}
\end{equation}

It is easily known that
  \begin{align*}
    E_{614} \lesssim & |\tfrac{a}{\kappa}| \sum_{\substack{k_1+k_2=k \\ k_1 \geq 2, k_2 \geq 1}} \| \nabla_x^{k_1} ( W^2 \Omega_\phi ) \|_{L^4} \| \nabla_x^{k_2+1} \widehat{\rho} \|_{L^4} \| \nabla_x^k \phi \|_{L^2} \\
    \lesssim & |\tfrac{a}{\kappa}|  \sum_{2 \leq k_1 \leq k-1} \| \nabla_x^{k_1} ( W^2 \Omega_\phi ) \|_{L^4} \| \widehat{\rho} \|_{H^s} \| \phi \|_{H^s} \,.
  \end{align*}
For all $2 \leq k_1 \leq k-1$, the quantity $ \| \nabla_x^{k_1} ( W^2 \Omega_\phi ) \|_{L^4} $ can be estimated by using the inequalities in Lemma \ref{Lm-Auxil-Ineq}
\begin{equation*}
  \begin{aligned}
    & \| \nabla_x^{k_1} ( W^2 \Omega_\phi ) \|_{L^4} \lesssim  \| W^2 \nabla_x^{k_1} \Omega_\phi \|_{L^4} + \| \nabla_x W^2 \nabla_x^{k_1 - 1} \Omega_\phi \|_{L^4} + \sum_{\substack{a+b=k_1 \\ a \geq 2}} \| \nabla_x^a W^2 \nabla_x^b \Omega_\phi \|_{L^4} \\
    \lesssim & \| W^2 \|_{L^\infty} \| \nabla_x^{k_1} \Omega_\phi \|_{L^4} + \| \nabla_x W^2 \|_{L^4} \| \nabla_x^{k_1 - 1} \Omega_\phi \|_{L^\infty} + \sum_{\substack{a+b=k_1 \\ a \geq 2}} \| \nabla_x^a W^2 \|_{L^4} \| \nabla_x^b \Omega_\phi \|_{L^\infty} \\
    \lesssim & ( 1 + \| \phi \|^4_{L^\infty} + \| \psi \|^4_{L^\infty} ) ( \| \nabla_x \phi \|_{H^{k_1}} + \| \nabla_x \psi \|_{H^{k_1}} )  \big{[} 1 + ( \| \phi \|_{H^{k_1}} + \| \psi \|_{H^{k_1}} )^{k_1 - 1} \big{]} \\
    & + ( 1 + \| \phi \|^2_{L^\infty} + \| \psi \|^2_{L^\infty} ) ( \| \phi \|_{L^\infty} \| \nabla_x \phi \|_{L^4} + \| \psi \|_{L^\infty} \| \nabla_x \psi \|_{L^4} ) \\
     & \quad \times ( \| \nabla_x \phi \|_{H^{k_1}} + \| \nabla_x \psi \|_{H^{k_1}} ) \big{[} 1 + ( \| \phi \|_{H^{k_1}} + \| \psi \|_{H^{k_1}} )^{k_1 - 2} \big{]} \\
    & + \sum_{\substack{a+b=k_1 \\ a \geq 2}} ( 1 + \| \phi \|^2_{H^s} + \| \psi \|^2_{H^s} ) ( \| \nabla_x \phi \|_{H^a} \| \phi \|_{H^a} + \| \nabla_x \psi \|_{H^a} \| \psi \|_{H^a}  ) \\
    & \qquad \times ( \| \nabla_x \phi \|_{H^{b+1}} + \| \nabla_x \psi \|_{H^{b+1}} ) \big{[} 1 + ( \| \phi \|_{H^{b+1}} + \| \psi \|_{H^{b+1}} )^{b-1} \big{]} \\
    \lesssim &  ( \| \nabla_x \phi \|_{H^s} + \| \nabla_x \psi \|_{H^s} ) \big{[} 1 + ( \| \phi \|_{H^s} + \| \psi \|_{H^s} )^{s+2} \big{]} \,,
  \end{aligned}
\end{equation*}
which immediately implies that
\begin{equation}\label{Apriori-Est-3D-phi-9}
  E_{614} \lesssim |\tfrac{a}{\kappa}| ( \| \nabla_x \phi \|_{H^s} + \| \nabla_x \psi \|_{H^s} ) \| \widehat{\rho} \|_{H^s} \| \phi \|_{H^s} \big{[} 1 + ( \| \phi \|_{H^s} + \| \psi \|_{H^s} )^{s+2} \big{]} \,.
\end{equation}

Now we estimate the term $E_{615}$. H\"older inequality and Sobolev embedding theory reduce to
\begin{equation*}
  \begin{aligned}
    E_{615} = & \tfrac{a}{4 \kappa} \l \nabla_x^{k+1} (W^2 \Omega_\phi) \widehat{\rho} , \nabla_x^k \phi \r + \tfrac{a}{4 \kappa} \l \nabla_x^{k} (W^2 \Omega_\phi) \widehat{\rho} , \nabla_x^{k+1} \phi \r \\
    \lesssim & | \tfrac{a}{\kappa} | \| \widehat{\rho} \|_{L^\infty} \| \nabla_x^k \phi \|_{L^2} \| \nabla_x^{k+1} (W^2 \Omega_\phi) \|_{L^2} + | \tfrac{a}{\kappa} | \| \widehat{\rho} \|_{L^\infty} \| \nabla_x^{k+1} \phi \|_{L^2} \| \nabla_x^{k} (W^2 \Omega_\phi) \|_{L^2} \\
    \lesssim & | \tfrac{a}{\kappa} | \| \widehat{\rho} \|_{H^s} \| \phi \|_{H^s} \| \nabla_x^{k+1} (W^2 \Omega_\phi) \|_{L^2} + | \tfrac{a}{\kappa} | \| \widehat{\rho} \|_{H^s} \| \nabla_x \phi \|_{H^s} \| \nabla_x^{k} (W^2 \Omega_\phi) \|_{L^2}.
  \end{aligned}
\end{equation*}
In order to dominate the term $E_{615}$, we require to estimate the $L^2$-norms $\| \nabla_x^{k+1} (W^2 \Omega_\phi) \|_{L^2}$ and $\| \nabla_x^{k} (W^2 \Omega_\phi) \|_{L^2}$ by utilizing the inequalities shown in Lemma \ref{Lm-Auxil-Ineq}, H\"older inequality and Sobolev embedding theory. More precisely, the calculation is displayed as follows:
  \begin{align*}
    \| \nabla_x^{k+1} (W^2 \Omega_\phi) \|_{L^2} \lesssim & \| \Omega_\phi \|_{L^\infty} \| \nabla_x^{k+1} W^2 \|_{L^2} + \| W^2 \|_{L^\infty} \| \nabla_x^{k+1} \Omega_\phi \|_{L^2} + \| \nabla_x W^2 \|_{L^\infty} \| \nabla_x^k \Omega_\phi \|_{L^2} \\
    & + \| \nabla_x \Omega_\phi \|_{L^\infty} \| \nabla_x^k W^2 \|_{L^2} + \sum_{\substack{k_1+k_2=k+1 \\ k_1, k_2 \geq 2}} \| \nabla_x^{k_1} W^2 \|_{L^4} \| \nabla_x^{k_2} \Omega_\phi \|_{L^4} \\
    \lesssim & ( 1 + \| \phi \|^2_{H^s} + \| \psi \|^2_{H^s} ) ( \| \phi \|_{H^s} + \| \psi \|_{H^s} ) ( \| \nabla_x \phi \|_{H^s} + \| \nabla \psi \|_{H^s} ) \\
    & + ( 1 + \| \phi \|^4_{H^s} + \| \psi \|^4_{H^s}  ) ( \| \nabla_x \phi \|_{H^s} + \| \nabla_x \psi \|_{H^s} ) \big{[} 1 + ( \| \phi \|_{H^s} + \| \psi \|_{H^s} )^s \big{]} \\
    & + ( \| \nabla_x \phi \|_{H^s} + \| \nabla_x \psi \|_{H^s} ) ( 1 + \| \phi \|^2_{H^s} + \| \psi \|^2_{H^s} ) ( \| \phi \|_{H^s} + \| \psi \|_{H^s} )^2 \\
    & + \sum_{\substack{k_1+k_2=k+1 \\ k_1, k_2 \geq 2}} ( 1 + \| \phi \|^2_{H^s} + \| \psi \|^2_{H^s} ) ( \| \nabla_x \phi \|_{H^{k_1}} \| \phi \|_{H^{k_1}} + \| \nabla_x \psi \|_{H^{k_1}} \| \psi \|_{H^{k_1}} ) \\
    & \qquad \times ( \| \nabla_x \phi \|_{H^{k_2}} + \| \nabla_x \psi \|_{H^{k_2}} ) \sum_{I=1}^{k_2} ( \| \phi \|_{H^{k_2}} + \| \psi \|_{H^{k_2}} )^{I-1} \\
    \lesssim & ( \| \nabla_x \phi \|_{H^s} + \| \nabla_x \psi \|_{H^s} ) ( \| \phi \|_{H^s} + \| \psi \|_{H^s} ) \big{[} 1 + ( \| \phi \|_{H^s} + \| \psi \|_{H^s} )^{s+4} \big{]}\,,
  \end{align*}
where we use the bound $|\Omega_\phi| + |\Omega_{\phi \phi }| + |\Omega_{\phi \psi}| \leq C$ in the second inequality. Furthermore, by making use of the same arguments in the above estimation of the $L^2$-norm $\| \nabla_x^{k+1} (W^2 \Omega_\phi) \|_{L^2}$, one can also derive the bound of the $L^2$-norm $\| \nabla_x^{k} (W^2 \Omega_\phi) \|_{L^2}$ as
\begin{equation*}
  \| \nabla_x^{k} (W^2 \Omega_\phi) \|_{L^2} \lesssim ( \| \phi \|_{H^s} + \| \psi \|_{H^s} )^2 \big{[} 1 + ( \| \phi \|_{H^s} + \| \psi \|_{H^s} )^{s+3} \big{]}\,.
\end{equation*}
Therefore, the estimation of the term $E_{615}$ is
\begin{equation}\label{Apriori-Est-3D-phi-10}
  E_{615} \lesssim |\tfrac{a}{\kappa}| ( \| \nabla_x \phi \|_{H^s} + \| \nabla_x \psi \|_{H^s} ) \| \widehat{\rho} \|_{H^s}  ( \| \phi \|_{H^s} + \| \psi \|_{H^s} ) \big{[} 1 + ( \| \phi \|_{H^s} + \| \psi \|_{H^s} )^{s+4} \big{]}\,.
\end{equation}

We plug the inequalities \eqref{Apriori-Est-3D-phi-6}, \eqref{Apriori-Est-3D-phi-7}, \eqref{Apriori-Est-3D-phi-8}, \eqref{Apriori-Est-3D-phi-9} and \eqref{Apriori-Est-3D-phi-10} into the relation \eqref{Apriori-Est-3D-phi-5}, and then we get the estimation of the term $E_{61}$:
\begin{equation}\label{Apriori-Est-3D-phi-11}
  \begin{aligned}
    E_{61} \lesssim & |\tfrac{a}{\kappa}| ( \| \nabla_x \phi \|_{H^s} + \| \nabla_x \psi \|_{H^s} ) \| \widehat{\rho} \|_{H^s}  ( \| \phi \|_{H^s} + \| \psi \|_{H^s} ) \big{[} 1 + ( \| \phi \|_{H^s} + \| \psi \|_{H^s} )^{s+5} \big{]} \,.
  \end{aligned}
\end{equation}

We now estimate the term $E_{62}$. It can be decomposed as
\begin{equation}\label{Apriori-Est-3D-phi-12}
  \begin{aligned}
    E_{62} = & 2 \gamma \l \nabla_x \nabla_x^k \widehat{\rho} \cdot \nabla_x \phi , \nabla_x \phi \r + 2 \gamma \l \nabla_x \widehat{\rho} \cdot \nabla_x^{k+1} \phi , \nabla_x^k \phi \r + 2 \gamma \l \nabla_x^k \widehat{\rho} \cdot \nabla_x^2 \phi , \nabla_x^k \phi \r \\
    & + 2 \gamma \l \nabla_x^2 \widehat{\rho} \cdot \nabla_x^k \phi ,\nabla_x^k \phi \r + 2 \gamma \sum_{\substack{k_1+k_2=k \\ k_1, k_2 \geq 2}} \l \nabla_x^{k_1 + 1} \widehat{\rho} \cdot \nabla_x^{k_2+1} \phi , \nabla_x^k \phi \r \\
    \equiv & \, E_{621} + E_{622} + E_{623} + E_{624} + E_{625} \,,
  \end{aligned}
\end{equation}
which can be estimated term by term. For the term $E_{621}$, we have
\begin{equation}\label{Apriori-Est-3D-phi-13}
  \begin{aligned}
    E_{621} = & - 2 \gamma \l \nabla_x^k \widehat{\rho} \cdot \nabla_x^2 \phi , \nabla_x^k \phi \r - 2 \gamma \l \nabla_x^k \widehat{\rho} \cdot \nabla_x \phi , \nabla_x^{k+1} \phi \r \\
    \lesssim & \gamma \| \nabla_x^k \widehat{\rho} \|_{L^2} \| \nabla_x^2 \phi \|_{L^4} \| \nabla_x^k \phi \|_{L^4} + \gamma  \| \nabla_x^k \widehat{\rho} \|_{L^2} \| \nabla_x \phi \|_{L^\infty} \| \nabla_x^{k+1} \phi \|_{L^2} \\
    \lesssim & \gamma \| \widehat{\rho} \|_{H^s} \| \phi \|_{H^3} \| \nabla_x \phi \|_{H^s} + \gamma \| \widehat{\rho} \|_{H^s} \| \phi \|_{H^3} \| \nabla_x \phi \|_{H^s} \\
    \lesssim & \gamma \| \nabla_x \phi \|_{H^s} \| \widehat{\rho} \|_{H^s} \| \phi \|_{H^s}\,.
  \end{aligned}
\end{equation}
For the remained terms $E_{622}, E_{623}, E_{624}$ and $E_{625}$, by the similar estimation of $E_{621},$ we have
\begin{equation}\label{Apriori-Est-3D-phi-14}
  E_{622} + E_{623} + E_{624} + E_{625} \lesssim \gamma \| \nabla_x \phi \|_{H^s} \| \widehat{\rho} \|_{H^s} \| \phi \|_{H^s}\,.
\end{equation}
Consequently, the inequalities \eqref{Apriori-Est-3D-phi-12}, \eqref{Apriori-Est-3D-phi-13} and \eqref{Apriori-Est-3D-phi-14} imply that
\begin{equation}\label{Apriori-Est-3D-phi-15}
  E_{62} \lesssim \gamma \| \nabla_x \phi \|_{H^s} \| \widehat{\rho} \|_{H^s} \| \phi \|_{H^s}\,.
\end{equation}
We remark that the condition $s \geq 3$ is {\em necessary} in the estimation of the term $E_{62}$.

Now we give the estimation of the term $E_{63}$, which can be decomposed as three parts:
\begin{equation}\label{Apriori-Est-3D-phi-16}
  \begin{aligned}
    E_{63} = & - 2 \gamma \l \tfrac{\phi}{W} \nabla_x^k | \nabla_x \phi |^2 , \nabla_x^k \phi \r - 2 \gamma \l \nabla_x^k ( \tfrac{\phi}{W} ) |\nabla_x^k \phi|^2 , \nabla_x^k \phi \r \\
    & \qquad - 2 \gamma \sum_{\substack{k_1+k_2=k \\ k_1, k_2 \geq 1}} \l \nabla_x^{k_1} (  \tfrac{\phi}{W} ) \nabla_x^{k_2} |\nabla_x \phi|^2 , \nabla_x^k \phi \r \\
    \equiv & \, E_{631} + E_{632} + E_{633} \,.
  \end{aligned}
\end{equation}
For the term $E_{631}$, by making use of the H\"older inequality and Sobolev embedding $H^2(\R^3) \hookrightarrow L^\infty(\R^3)$ and $H^1 (\R^3) \hookrightarrow L^4(\R^3)$, we have
\begin{equation}\label{Apriori-Est-3D-phi-17}
  \begin{aligned}
    E_{631} \lesssim & \gamma \| \nabla_x \phi \|_{L^\infty} \| \nabla_x^{k+1} \phi \|_{L^2} \| \nabla_x^k \phi \|_{L^2} + \gamma \sum_{\substack{k_1+k_2=k \\ k_1,k_2 \geq 1}} \| \nabla_x^{k_1 + 1} \phi \|_{L^4} \| \nabla_x^{k_2 + 1} \phi \|_{L^4} \| \nabla_x^k \phi \|_{L^2} \\
    \lesssim & \gamma \| \nabla_x \phi \|_{H^2} \| \nabla_x \phi \|_{H^s} \| \phi \|_{H^s} + \gamma \sum_{\substack{k_1+k_2=k \\ k_1,k_2 \geq 1}} \| \nabla_x^{k_1+1} \phi \|_{H^1} \| \nabla_x^{k_2 + 1} \phi \|_{H^1} \| \phi \|_{H^s} \\
    \lesssim & \gamma \| \phi \|^2_{H^s} \| \nabla_x \phi \|_{H^s}
  \end{aligned}
\end{equation}
for $s \geq 3$. For the term $E_{632}$, the H\"older inequality and Sobolev embedding theory imply that
\begin{equation*}
  \begin{aligned}
    E_{632} \lesssim \gamma \| \nabla_x \phi \|^2_{L^\infty} \| \nabla_x^k ( \tfrac{\phi}{W} ) \|_{L^2} \| \nabla_x^k \phi \|_{L^2} \lesssim \gamma \| \nabla_x \phi \|^2_{H^2} \| \phi \|_{H^s} \| \nabla_x^k ( \tfrac{\phi}{W} ) \|_{L^2} \,.
  \end{aligned}
\end{equation*}
Noticing that $ | \nabla_x^k ( \tfrac{\phi}{W} ) | \leq \tfrac{1}{2} |\nabla_x^k \Omega| $, we derive from Lemma \ref{Lm-Auxil-Ineq} that
\begin{equation*}
  \begin{aligned}
    \| \nabla_x^k ( \tfrac{\phi}{W} ) \|_{L^2} \lesssim & ( \| \nabla_x \phi \|_{H^{k-1}} + \| \nabla_x \psi \|_{H^{k-1}} ) \sum_{I=1}^k ( \| \phi \|_{H^{k-1}} + \| \psi \|_{H^{k-1}} )^I \\
    \lesssim & ( \| \phi \|_{H^s} + \| \psi \|_{H^s} ) \big{[} 1 + ( \| \phi \|_{H^s} + \| \psi \|_{H^s} )^{s-1} \big{]} \,,
  \end{aligned}
\end{equation*}
which immediately yields that
\begin{equation}\label{Apriori-Est-3D-phi-18}
  E_{632} \lesssim \gamma \| \phi \|^2_{H^s} \| \nabla_x \phi \|_{H^s} ( \| \phi \|_{H^s} + \| \psi \|_{H^s} ) \big{[} 1 + ( \| \phi \|_{H^s} + \| \psi \|_{H^s} )^{s-1} \big{]} \,.
\end{equation}
For the term $E_{633}$, it is implied that by Lemma \ref{Lm-Auxil-Ineq}
\begin{equation*}
  \begin{aligned}
    E_{633} \lesssim & \gamma \sum_{\substack{k_1+k_2=k \\ k_1 , k_2 \geq 1}} \| \nabla_x^{k_1} ( \tfrac{\phi}{W} ) \|_{L^4} \| \nabla_x^{k_2} |\nabla_x \phi|^2 \|_{L^4} \| \nabla_x^k \phi \|_{L^2} \\
    \lesssim & \gamma \sum_{1 \leq k_2 \leq k-1} \| \nabla_x^{k_2} |\nabla_x \phi|^2 \|_{L^4} \| \phi \|_{H^s} \sum_{I=1}^{s-1} ( \| \phi \|_{H^s} + \| \psi \|_{H^s} )^{I} \,.
  \end{aligned}
\end{equation*}
For $ 1 \leq k_2 \leq k-1 ( \leq s-1 ) $, if $s \geq 3$, then we have
\begin{equation*}
  \begin{aligned}
    & \| \nabla_x^{k_2} |\nabla_x \phi|^2 \|_{L^4} \lesssim  \| \nabla_x \phi \|_{L^\infty} \| \nabla_x^{k_2+1} \phi \|_{L^4} + \sum_{\substack{a+b=k_2 \\ a,b \geq 1}} \| \nabla_x^{a+1} \phi \|_{L^\infty} \| \nabla_x^{b+1} \phi \|_{L^4} \\
    \lesssim & \| \phi \|_{H^3} \| \nabla_x \phi \|_{H^s} + \sum_{\substack{a+b=k_2 \\ a,b \geq 1}} \| \nabla_x^{a+1} \phi \|_{H^2} \| \nabla_x^{b+1} \phi \|_{H^1} \lesssim \| \phi \|_{H^s} \| \nabla_x \phi \|_{H^s} \,,
  \end{aligned}
\end{equation*}
which means that
\begin{equation}\label{Apriori-Est-3D-phi-19}
  E_{633} \lesssim \gamma \| \nabla_x \phi \|_{H^s} \| \phi \|^2_{H^s} ( \| \phi \|_{H^s} + \| \psi \|_{H^s} ) \big{[} 1 + ( \| \phi \|_{H^s} + \| \psi \|_{H^s} )^{s-2} \big{]} \,.
\end{equation}
Thus, substituting the inequalities \eqref{Apriori-Est-3D-phi-17}, \eqref{Apriori-Est-3D-phi-18} and \eqref{Apriori-Est-3D-phi-19} into \eqref{Apriori-Est-3D-phi-16} implies that
\begin{equation}\label{Apriori-Est-3D-phi-20}
  E_{63} \lesssim \gamma \| \nabla_x \phi \|_{H^s} \| \phi \|^2_{H^s} \big{[} 1 + ( \| \phi \|_{H^s} + \| \psi \|_{H^s} )^{s} \big{]} \,.
\end{equation}
Furthermore, by the similar estimation of the term $E_{63}$, we have
\begin{equation}\label{Apriori-Est-3D-phi-21}
  E_{64} + E_{65} \lesssim \gamma ( \| \nabla_x \phi \|_{H^s} + \| \nabla_x \psi \|_{H^s} ) (\| \phi \|^2_{H^s} + \| \psi \|^2_{H^s} ) \big{[} 1 + ( \| \phi \|_{H^s} + \| \psi \|_{H^s} )^{s} \big{]} \,.
\end{equation}

Now we estimate the term $E_{66}$. For convenience of calculation, we decompose it as seven parts:
  \begin{align}\label{Apriori-Est-3D-phi-21-66}
    \no E_{66} = & \tfrac{1}{4} \l \nabla_x^k W^2 ( \widetilde{\lambda} S(v) + A(v) ) : \Omega_\phi \otimes \Omega, \nabla_x^k \phi \r + \tfrac{1}{4} \l W^2 \nabla_x^k ( \widetilde{\lambda} S(v) + A(v) ) : \Omega_\phi \otimes \Omega, \nabla_x^k \phi \r \\
    \no & + \tfrac{1}{4} \l W^2  ( \widetilde{\lambda} S(v) + A(v) ) : \nabla_x^k ( \Omega_\phi \otimes \Omega ) , \nabla_x^k \phi \r \\
    \no & + \tfrac{1}{4} \sum_{\substack{k_1+k_2=k \\ k_1, k_2 \geq 1}} \l \nabla_x^{k_1} W^2 \nabla_x^{k_2} ( \widetilde{\lambda} S(v) + A(v) ) : \Omega_\phi \otimes \Omega, \nabla_x^k \phi \r \\
     & + \tfrac{1}{4} \sum_{\substack{k_1+k_3=k \\ k_1, k_3 \geq 1}} \l \nabla_x^{k_1} W^2 ( \widetilde{\lambda} S(v) + A(v) ) : \nabla_x^{k_3} (  \Omega_\phi \otimes \Omega ) , \nabla_x^k \phi \r \\
    \no & + \tfrac{1}{4} \sum_{\substack{k_2+k_3=k \\ k_2, k_3 \geq 1}} \l  W^2 \nabla_x^{k_2} ( \widetilde{\lambda} S(v) + A(v) ) : \nabla_x^{k_3} (  \Omega_\phi \otimes \Omega ) , \nabla_x^k \phi \r \\
    \no & + \tfrac{1}{4} \sum_{\substack{k_1+k_2+k_3=k \\ k_2, k_3 \geq 1}} \l \nabla_x^{k_1} W^2 \nabla_x^{k_2} ( \widetilde{\lambda} S(v) + A(v) ) : \nabla_x^{k_3} (  \Omega_\phi \otimes \Omega ) , \nabla_x^k \phi \r \\
    \no \equiv & \, E_{661} + E_{662} + E_{663} + E_{664} + E_{665} + E_{666} + E_{667} \,.
  \end{align}
  For the first two terms $ E_{661} $ and $ E_{662} $, it is derived from Lemma \ref{Lm-Auxil-Ineq} that
    \begin{align}\label{Apriori-Est-3D-phi-22}
      \no E_{661} + E_{662} \lesssim & ( |\widetilde{\lambda}| + 1 ) \Big{[} \| \nabla_x^k \phi \|_{L^2} \| \nabla_x v \|_{L^\infty} \| \nabla_x^k W^2 \|_{L^2} +  \| W^2 \|_{L^\infty} \| \nabla_x^{k+1} v \|_{L^2} \| \nabla_x^k \phi \|_{L^2} \Big{]} \\
      \no \lesssim & ( |\widetilde{\lambda}| + 1 ) \| \phi \|_{H^s} \| \nabla_x v \|_{H^s} (  1 + \| \phi \|^2_{H^s} + \| \psi \|^2_{H^s} ) ( \| \phi \|_{H^s} + \| \psi \|_{H^s} )^2 \\
       & + ( |\widetilde{\lambda}| + 1 ) ( 1 + \| \phi \|^4_{H^s} + \| \psi \|^4_{H^s} ) \| \phi \|_{H^s} \| \nabla_x v \|_{H^s} \\
       \no \lesssim &  ( |\widetilde{\lambda}| + 1 ) \| \nabla_x v \|_{H^s} \| \phi \|_{H^s} ( 1 + \| \phi \|^4_{H^s} + \| \psi \|^4_{H^s} ) \,.
    \end{align}
  By Sobolev embedding theory and H\"older inequality, we estimate the term $E_{663}$ that
    \begin{align*}
      E_{663} \lesssim & ( |\widetilde{\lambda}| + 1 ) \| W^2 \|_{L^\infty} \| \nabla_x v \|_{L^\infty} \| \nabla_x^k \phi \|_{L^2} \| \nabla_x^k ( \Omega_\phi \otimes \Omega ) \|_{L^2} \\
      \lesssim & ( |\widetilde{\lambda}| + 1 ) ( 1 + \| \phi \|^4_{H^s} + \| \psi \|^4_{H^s} ) \| \nabla_x v \|_{H^s} \| \phi \|_{H^s} \| \nabla_x^k ( \Omega_\phi \otimes \Omega ) \|_{L^2} \,.
    \end{align*}
  The norm $ \| \nabla_x^k ( \Omega_\phi \otimes \Omega ) \|_{L^2} $ is easy to be derived from Lemma \ref{Lm-Auxil-Ineq} as follows:
    \begin{align*}
      & \| \nabla_x^k ( \Omega_\phi \otimes \Omega ) \|_{L^2} = \| \nabla_x^k \Omega_\phi \otimes \Omega \|_{L^2} + \| \Omega_\phi \otimes \nabla_x^k \Omega \|_{L^2} + \sum_{\substack{k_1+k_2=k \\ k_1, k_2 \geq 1}} \| \nabla_x^{k_1} \Omega_\phi \otimes \nabla_x^{k_2} \Omega \|_{L^2} \\
      \lesssim & \| \nabla_x^k \Omega_\phi \|_{L^2} + \| \nabla_x^k \Omega \|_{L^2} + \sum_{\substack{k_1+k_2=k \\ k_1, k_2 \geq 1}} \| \nabla_x^{k_1} \Omega_\phi \|_{L^4} \| \nabla_x^{k_2} \Omega \|_{L^4} \\
      \lesssim & \sum_{I=1}^k ( \| \phi \|_{H^s} + \| \psi \|_{H^s} )^I  + \sum_{\substack{k_1+k_2=k \\ k_1, k_2 \geq 1}} ( \| \nabla_x \phi \|_{H^{k_1}} + \| \nabla_x \psi \|_{H^{k_2}} ) \sum_{I=1}^{k_1} ( \| \phi \|_{H^{k_1}} + \| \psi \|_{H^{k_1}} )^{I-1} \\
      & \qquad \qquad \times ( \| \nabla_x \phi \|_{H^{k_2}} + \| \nabla_x \psi \|_{H^{k_2}} ) \sum_{I=1}^{k_2} ( \| \phi \|_{H^{k_2}} + \| \psi \|_{H^{k_2}} )^{I-1} \\
      \lesssim & ( \| \phi \|_{H^s} + \| \psi \|_{H^s} ) \big{[} 1 + ( \| \phi \|_{H^s} + \| \psi \|_{H^s} )^{s-1} \big{]} \,,
    \end{align*}
  where we utilize the bound $|\Omega| + |\Omega_\phi| \leq C$. As a consequence, we know that
  \begin{equation}\label{Apriori-Est-3D-phi-23}
    E_{663} \lesssim  ( |\widetilde{\lambda}| + 1 ) \| \nabla_x v \|_{H^s} ( \| \phi \|_{H^s} + \| \psi \|_{H^s} ) \big{[} 1 + ( \| \phi \|_{H^s} + \| \psi \|_{H^s} )^{s+1} \big{]} \,.
  \end{equation}
  For the term $E_{664}$, we calculate that by using Lemma \ref{Lm-Auxil-Ineq}
    \begin{align}\label{Apriori-Est-3D-phi-24}
      \no E_{664} \lesssim & ( |\widetilde{\lambda}| + 1 ) \sum_{\substack{k_1+k_2=k \\ k_1, k_2 \geq 1}} \| \nabla_x^{k_1} W^2 \|_{L^4} \| \nabla_x^{k_2+1} v \|_{L^4} \| \nabla_x^k \phi \|_{L^2} \\
      \no \lesssim &  ( |\widetilde{\lambda}| + 1 ) \| \phi \|_{H^s} \| \nabla_x v \|_{H^s} \sum_{1 \leq k_1 \leq k - 1} ( 1 + \| \phi \|^2_{H^{k_1}} + \| \psi \|^2_{H^{k_1}} + \| \phi \|^2_{H^3} + \| \psi \|^2_{H^3} ) \\
       & \qquad \times ( \| \nabla_x \phi \|_{H^{k_1}} \| \phi \|_{H^{k_1}} + \| \nabla_x \psi \|_{H^{k_1}} \| \psi \|_{H^{k_1}} ) \\
      \no \lesssim & ( |\widetilde{\lambda}| + 1 ) \| \nabla_x v \|_{H^s} \| \phi \|_{H^s} ( \| \phi \|_{H^s} + \| \psi \|_{H^s} )^2 ( 1 + \| \phi \|^2_{H^s} + \| \psi \|^2_{H^s}  ) \,.
    \end{align}
  For the term $ E_{665} $, we observe that by the inequalities shown in Lemma \ref{Lm-Auxil-Ineq}
  \begin{equation*}
    \begin{aligned}
      E_{665} \lesssim & ( |\widetilde{\lambda}| + 1 ) \| \nabla_x v \|_{L^\infty} \| \nabla_x^k \phi \|_{L^2} \sum_{\substack{k_1+k_3=k \\ k_1, k_3 \geq 1}} \| \nabla_x^{k_1} W^2 \|_{L^4} \| \nabla_x^{k_3} ( \Omega_\phi \otimes \Omega ) \|_{L^4} \\
      \lesssim & ( |\widetilde{\lambda}| + 1 ) \| \nabla_x v \|_{H^s} \| \phi \|_{H^s} ( \| \phi \|^2_{H^s} + \| \psi \|^2_{H^s} ) ( 1 + \| \phi \|^2_{H^s} + \| \psi \|^2_{H^s} ) \\
       & \qquad \times \sum_{1 \leq k_3 \leq k-1} \| \nabla_x^{k_3} ( \Omega_\phi \otimes \Omega ) \|_{L^4} \,,
    \end{aligned}
  \end{equation*}
  where the estimation of the quantity $ \sum\limits_{1 \leq k_3 \leq k-1} \| \nabla_x^{k_3} ( \Omega_\phi \otimes \Omega ) \|_{L^4} $
  \begin{equation*}
    \sum_{1 \leq k_3 \leq k-1} \| \nabla_x^{k_3} ( \Omega_\phi \otimes \Omega ) \|_{L^4} \lesssim ( \| \phi \|_{H^s} + \| \psi \|_{H^s} ) \big{[} 1 + ( \| \phi \|_{H^s} + \| \psi \|_{H^s} )^{s-2}
  \end{equation*}
  is implied by the inequalities shown in Lemma \ref{Lm-Auxil-Ineq}. Consequently, we gain the estimation of the term $E_{665}$
  \begin{equation}\label{Apriori-Est-3D-phi-25}
    E_{665} \lesssim ( |\widetilde{\lambda}| + 1 ) \| \nabla_x v \|_{H^s} \| \phi \|_{H^s} ( \| \phi \|_{H^s} + \| \psi \|_{H^s} )^3 \big{[} 1 + ( \| \phi \|_{H^s} + \| \psi \|_{H^s} )^{s} \big{]} \,.
  \end{equation}
  Following the analogous arguments in the estimation of the inequalities \eqref{Apriori-Est-3D-phi-24} and \eqref{Apriori-Est-3D-phi-25} reduces to
  \begin{equation}\label{Apriori-Est-3D-phi-26}
    E_{666} \lesssim ( |\widetilde{\lambda}| + 1 ) \| \nabla_x v \|_{H^s} \| \phi \|_{H^s} ( \| \phi \|_{H^s} + \| \psi \|_{H^s} ) \big{[} 1 + ( \| \phi \|_{H^s} + \| \psi \|_{H^s} )^{s+2} \big{]} \,.
  \end{equation}
  For the estimation of the term $E_{667}$, we can gain by using the H\"older inequality, Sobolev embedding theory and the inequalities shown in Lemma \ref{Lm-Auxil-Ineq}:
  \begin{equation}\label{Apriori-Est-3D-phi-27}
    \begin{aligned}
      E_{667} \lesssim & ( |\widetilde{\lambda}| + 1 ) \sum_{\substack{k_1+k_2+k_3=k \\ k_1, k_2, k_3 \geq 1}} \| \nabla_x^{k_2 + 1} v \|_{L^\infty} \| \nabla_x^{k_1} W^2 \|_{L^4} \| \nabla_x^{k_3} ( \Omega_\phi \otimes \Omega ) \|_{L^4} \| \nabla_x^k \phi \|_{L^2} \\
      \lesssim & ( |\widetilde{\lambda}| + 1 ) \| \nabla_x v \|_{H^s} \| \phi \|_{H^s} ( \| \phi \|_{H^s} + \| \psi \|_{H^s} ) \big{[} 1 + ( \| \phi \|_{H^s} + \| \psi \|_{H^s} )^{s-2} \big{]} \\
      & \quad \times \sum_{1 \leq k_1 \leq k-2} ( 1 + \| \phi \|^2_{H^{k_1}} + \| \psi \|^2_{H^{k_1}} + \| \phi \|^2_{H^3} + \| \psi \|^2_{H^3} ) \\
       & \qquad \qquad \qquad \times ( \| \nabla_x \phi \|_{H^{k_1}} \| \phi \|_{H^{k_1}} + \| \nabla_x \psi \|_{H^{k_1}} \| \psi \|_{H^{k_1}} ) \\
      \lesssim & ( |\widetilde{\lambda}| + 1 ) \| \nabla_x v \|_{H^s} ( \| \phi \|_{H^s} + \| \psi \|_{H^s} )^2 \big{[} 1 + ( \| \phi \|_{H^s} + \| \psi \|_{H^s} )^{s} \big{]} \,.
    \end{aligned}
  \end{equation}

  Thus, by plugging the inequalities \eqref{Apriori-Est-3D-phi-22}, \eqref{Apriori-Est-3D-phi-23}, \eqref{Apriori-Est-3D-phi-24},  \eqref{Apriori-Est-3D-phi-25}, \eqref{Apriori-Est-3D-phi-26} and \eqref{Apriori-Est-3D-phi-27} into \eqref{Apriori-Est-3D-phi-21-66}, we have
  \begin{equation}\label{Apriori-Est-3D-phi-28}
    E_{66} \lesssim ( |\widetilde{\lambda}| + 1 ) \| \nabla_x v \|_{H^s} ( \| \phi \|_{H^s} + \| \psi \|_{H^s} ) \big{[} 1 + ( \| \phi \|_{H^s} + \| \psi \|_{H^s} )^{s+3} \big{]} \,.
  \end{equation}

  Furthermore, it is derived from substituting the inequalities \eqref{Apriori-Est-3D-phi-11}, \eqref{Apriori-Est-3D-phi-15}, \eqref{Apriori-Est-3D-phi-20}, \eqref{Apriori-Est-3D-phi-21} and \eqref{Apriori-Est-3D-phi-28} into \eqref{Apriori-Est-3D-phi-4} that
    \begin{align}\label{Apriori-Est-3D-phi-29}
      \no & E_6 + \gamma \| \nabla_x^{k+1} \phi \|^2_{L^2} =  \gamma \| \nabla_x^{k+1} \phi \|^2_{L^2} - \l \nabla_x^k \mathcal{H}_\phi ( \widehat{\rho}, \phi , \psi, v ) , \nabla_x^k \phi \r \\
      \no \lesssim &  ( |\tfrac{a}{\kappa} | +  \gamma  ) ( \| \nabla_x \phi \|_{H^s} + \| \nabla_x \psi \|_{H^s} ) ( \| \phi \|_{H^s} + \| \psi \|_{H^s} + \| \widehat{\rho} \|_{H^s} ) \big{[} 1 + ( \| \phi \|_{H^s} + \| \psi \|_{H^s} )^{s+4} \big{]} \\
       & + ( |\widetilde{\lambda}| + 1 ) \| \nabla_x v \|_{H^s} ( \| \phi \|_{H^s} + \| \psi \|_{H^s} ) \big{[} 1 + ( \| \phi \|_{H^s} + \| \psi \|_{H^s} )^{s+3} \big{]} \,.
    \end{align}
  Therefore, we plug the inequalities \eqref{Apriori-Est-3D-phi-2}, \eqref{Apriori-Est-3D-phi-3} and \eqref{Apriori-Est-3D-phi-29} into the relation \eqref{Apriori-Est-3D-phi-1}, sum up for all integer $0 \leq k \leq s$ and then we gain the energy estimate of the $\phi$-equation of the SOH-NS system \eqref{SOH-NS-3D-SPT-Smp}
    \begin{align}\label{Apriori-Est-3D-phi}
      \no & \tfrac{1}{2} \tfrac{\d}{\d t} \| \phi \|^2_{H^s} + \gamma \| \nabla_x \phi \|^2_{H^s} \\
      \no \lesssim & ( |a c_2| + |\tfrac{a}{\kappa} | +  \gamma  ) ( \| \nabla_x \phi \|_{H^s} + \| \nabla_x \psi \|_{H^s} ) ( \| \phi \|_{H^s} + \| \psi \|_{H^s} + \| \widehat{\rho} \|_{H^s} ) \\
       & \qquad \qquad \times \big{[} 1 + ( \| \phi \|_{H^s} + \| \psi \|_{H^s} )^{s+4} \big{]} \\
       \no & + ( |\widetilde{\lambda}| + 1 ) \| \nabla_x v \|_{H^s} ( \| \phi \|_{H^s} + \| \psi \|_{H^s} ) \big{[} 1 + ( \| \phi \|_{H^s} + \| \psi \|_{H^s} )^{s+3} \big{]} \,.
    \end{align}

{\smallskip\noindent \em\large Step 3. Energy estimate for $\psi$-equation.}

From the geometric view, the two components $\phi$ and $\psi$ in the stereographic projection transform are symmetric in geometric structure, which results to that the $\psi$-equation in the SOH-NS system \eqref{SOH-NS-3D-SPT-Smp} is of the same form of the $\phi$-equation. Consequently, by following the similar arguments in the energy estimate of $\phi$-equation in {\em Step 2}, we can derive the energy estimate for $\psi$-equation of the SOH-NS system \eqref{SOH-NS-3D-SPT-Smp}
  \begin{equation}\label{Apriori-Est-3D-psi}
    \begin{aligned}
      & \tfrac{1}{2} \tfrac{\d}{\d t} \| \psi \|^2_{H^s} + \gamma \| \nabla_x \psi \|^2_{H^s} \\
      \lesssim & ( |a c_2| + |\tfrac{a}{\kappa} | +  \gamma  ) ( \| \nabla_x \phi \|_{H^s} + \| \nabla_x \psi \|_{H^s} ) ( \| \phi \|_{H^s} + \| \psi \|_{H^s} + \| \widehat{\rho} \|_{H^s} ) \\
       & \qquad \qquad \times \big{[} 1 + ( \| \phi \|_{H^s} + \| \psi \|_{H^s} )^{s+4} \big{]} \\
       & + ( |\widetilde{\lambda}| + 1 ) \| \nabla_x v \|_{H^s} ( \| \phi \|_{H^s} + \| \psi \|_{H^s} ) \big{[} 1 + ( \| \phi \|_{H^s} + \| \psi \|_{H^s} )^{s+3} \big{]} \,.
    \end{aligned}
  \end{equation}

{\smallskip\noindent \em\large Step 4. Energy estimate for $v$-equation.}

For any integer $0 \leq k \leq s$, we act the derivative operator $\nabla_x^k$ on the $v$-equation of the system \eqref{SOH-NS-3D-SPT-Smp}, take $L^2$-inner product by dot product with $\nabla_x^k v$, and integrate by parts over $\R^3$, then we gain
  \begin{equation}\label{Apriori-Est-3D-v-1}
    \begin{aligned}
       \tfrac{Re}{2} \tfrac{\d}{\d t} \| \nabla_x^k v \|^2_{L^2} + \| \nabla_x^{k+1} v \|^2_{L^2} = - Re \l \nabla_x^k ( v \cdot \nabla_x v ) , \nabla_x^k v \r - \l \nabla_x^k [ e^{\widehat{\rho}} \mathcal{G} ( \widehat{\rho}, \phi, \psi ) ] , \nabla_x^k v \r \,,
    \end{aligned}
  \end{equation}
  where the first term in the right-hand side of the above equality can be estimated
  \begin{equation}\label{Apriori-Est-3D-v-2}
    \begin{aligned}
      & - Re \l \nabla_x^k ( v \cdot \nabla_x v ) , \nabla_x^k v \r =  - Re \l \nabla_x v \cdot \nabla_x^k v \r - Re \sum_{\substack{a+b=k \\ a \geq 2}} \l \nabla_x^a v \nabla_x^{b+1} v , \nabla_x^k v \r  \\
       \lesssim & Re \| \nabla_x v \|_{L^\infty} \| \nabla_x^k v \|^2_{L^2} + Re \sum_{\substack{a+b=k \\ a \geq 2}} \| \nabla_x^a v \|_{L^4} \| \nabla_x^{b+1} v \|_{L^4} \| \nabla_x^k v \|_{L^2} \\
      \lesssim & Re \| \nabla_x v \|_{H^s} \| v \|^2_{H^s} \,,
    \end{aligned}
  \end{equation}
  and the second term in the right-hand side of the above equality can be decomposed as
    \begin{align}\label{Apriori-Est-3D-v-3}
      \no & - \l \nabla_x^k [ e^{\widehat{\rho}} \mathcal{G} ( \widehat{\rho}, \phi, \psi ) ] , \nabla_x^k v \r = - \l \nabla_x^k [ e^{\widehat{\rho}} \mathcal{Q} ( \Omega ) ] , \nabla_x^k v \r - c_4 \l \nabla_x^k [ e^{\widehat{\rho}} ( \Omega_\phi \cdot \nabla_x \phi ) \Omega ], \nabla_x^k v \r \\
       & - c_4 \l \nabla_x^k [ e^{\widehat{\rho}} ( \Omega_\psi \cdot \nabla_x \psi ) \Omega ], \nabla_x^k v \r - c_4 \l  \nabla_x^k [ e^{\widehat{\rho}} ( \Omega \cdot \nabla_x \phi ) \Omega_\phi ], \nabla_x^k v \r \\
     \no  & - c_4 \l  \nabla_x^k [ e^{\widehat{\rho}} ( \Omega \cdot \nabla_x \psi ) \Omega_\psi ], \nabla_x^k v \r \\
      \no & \equiv \, K_1 + K_2 + K_3 + K_4 + K_5 \,.
    \end{align}
  Here we make use of the definition of the quantity of $\mathcal{G} (\widehat{\rho}, \phi, \psi)$.

  It remains to estimate the terms $K_i \, (1 \leq i \leq 5)$ term by term. For the term $K_1$, we can divide it into seven parts for the convenience of calculation:
  \begin{equation}\label{Apriori-Est-3D-v-4}
    \begin{aligned}
      K_1 = & - \l \nabla_x^k e^{\widehat{\rho}} \nabla_x \widehat{\rho} \cdot \mathcal{Q}(\Omega) , \nabla_x^k v \r - \l e^{\widehat{\rho}} \nabla_x \nabla_x^k \widehat{\rho} \cdot \mathcal{Q}(\Omega) , \nabla_x^k v \r \\
      & - \l  e^{\widehat{\rho}} \nabla_x \widehat{\rho} \cdot \nabla_x^k \mathcal{Q}(\Omega) , \nabla_x^k v \r - \sum_{\substack{k_1+k_2=k \\ k_1, k_2 \geq 1}} \l \nabla_x^{k_1} e^{\widehat{\rho}} \nabla_x^{k_2+1} \widehat{\rho} \cdot \mathcal{Q}(\Omega) , \nabla_x^k v \r \\
      & - \sum_{\substack{k_1+k_3=k \\ k_1, k_3 \geq 1}} \l \nabla_x^{k_1} e^{\widehat{\rho}} \nabla_x \widehat{\rho} \cdot \nabla_x^{k_3} \mathcal{Q}(\Omega) , \nabla_x^k v \r - \sum_{\substack{k_2+k_3=k \\ k_2, k_3 \geq 1}} \l  e^{\widehat{\rho}} \nabla_x^{k_2+1} \widehat{\rho} \cdot \nabla_x^{k_3} \mathcal{Q}(\Omega) , \nabla_x^k v \r \\
      & - \sum_{\substack{k_1+k_2+k_3=k \\ k_1, k_2, k_3 \geq 1}} \l \nabla_x^{k_1} e^{\widehat{\rho}} \nabla_x^{k_2+1} \widehat{\rho} \cdot \nabla_x^{k_3} \mathcal{Q}(\Omega) , \nabla_x^k v \r \\
      \equiv & \, K_{11} + K_{12} + K_{13} + K_{14} + K_{15} + K_{16} + K_{17}\,.
    \end{aligned}
  \end{equation}

  For the term $K_{11}$, it is derived from the inequalities \eqref{Bnds-rho-norms} in Lemma \ref{Lm-rho-L^infty}, H\"older inequality, Sobolev embedding theory and the bound $ |\mathcal{Q}(\Omega)| \leq \tfrac{4}{3} |c_4| $ that
    \begin{align}\label{Apriori-Est-3D-v-5}
      \no K_{11} \lesssim & |c_4| \| \nabla_x \widehat{\rho} \|_{L^\infty} \| \nabla_x^k e^{\widehat{\rho}} \|_{L^2} \| \nabla_x^k v \|_{L^2} \lesssim  |c_4| \| \widehat{\rho} \|_{H^3} \sum_{I=1}^s \| e^{\widehat{\rho}} \|_{L^\infty} \| \widehat{\rho} \|^I_{H^s} \| v \|_{H^s} \\
      \lesssim & |c_4|  \| e^{\widehat{\rho}} \|_{L^\infty} \| v \|_{H^s} \| \widehat{\rho} \|^2_{H^s} ( 1 + \| \widehat{\rho} \|^{s-1}_{H^s} )
    \end{align}
  if $s \geq 3$. For the term $ E_{12} $, by making use of integrating by parts over $\R^3$ and the bound $ |\mathcal{Q}(\Omega)| \leq \tfrac{4}{3} |c_4| $, we have
  \begin{equation}\label{Apriori-Est-3D-v-6}
    \begin{aligned}
      E_{12} = & \l \nabla_x e^{\widehat{\rho}} \nabla_x^k \widehat{\rho} \cdot \mathcal{Q}(\Omega) , \nabla_x^k v \r + \l e^{\widehat{\rho}} \nabla_x^k \widehat{\rho} \cdot \nabla_x  \mathcal{Q}(\Omega) , \nabla_x^k v \r + \l  e^{\widehat{\rho}} \nabla_x^k \widehat{\rho} \cdot \mathcal{Q}(\Omega) , \nabla_x^{k+1} v \r \\
      \lesssim & |c_4| \| e^{\widehat{\rho}} \|_{L^\infty} \| \nabla_x^k \widehat{\rho} \|_{L^2} \Big{[} ( \| \nabla_x \widehat{\rho} \|_{L^\infty}  + \| \nabla_x \phi \|_{L^\infty} + \| \nabla_x \psi \|_{L^\infty} )  \| \nabla_x^k v \|_{L^2} +  \| \nabla_x^{k+1} v \|_{L^2} \Big{]} \\
      \lesssim & |c_4| \| e^{\widehat{\rho}} \|_{L^\infty} \| \widehat{\rho} \|_{H^s} \| v \|_{H^s} ( \| \widehat{\rho} \|_{H^s} +  \| \phi \|_{H^s} + \| \psi \|_{H^s} ) + |c_4| \| e^{\widehat{\rho}} \|_{L^\infty} \| \widehat{\rho} \|_{H^s} \| \nabla_x v \|_{H^s}
    \end{aligned}
  \end{equation}
  if $s \geq 3$. For the term $K_{13}$, the H\"older inequality, Sobolev embedding $ H^2(\R^3) \hookrightarrow  L^\infty(\R^3)$ and the inequalities shown in Lemma \ref{Lm-Auxil-Ineq} yield that
    \begin{align}\label{Apriori-Est-3D-v-7}
      \no K_{13} \lesssim & |c_4| \| e^{\widehat{\rho}} \|_{L^\infty} \| \nabla_x \widehat{\rho} \|_{L^\infty} \| \nabla_x^k ( \Omega \otimes \Omega ) \|_{L^2} \| \nabla_x^k v \|_{L^2} \\
      \no \lesssim & |c_4| \| e^{\widehat{\rho}} \|_{L^\infty} \| v \|_{H^s} \| \widehat{\rho} \|_{H^3} \Big{(} \| \nabla_x^k \Omega \|_{L^2} + \sum_{\substack{k_1+k_2=k \\ k_1, k_2 \geq 1}} \| \nabla_x^{k_1} \Omega \|_{L^4} \| \nabla_x^{k_2} \Omega \|_{L^4} \Big{]} \\
      \no \lesssim & |c_4| \| e^{\widehat{\rho}} \|_{L^\infty} \| v \|_{H^s} \| \widehat{\rho} \|_{H^s}  \Bigg{[} \sum_{I=1}^k ( \| \phi \|_{H^k} + \| \psi \|_{H^k} )^I + \sum_{\substack{k_1+k_2=k \\ k_1, k_2 \geq 1}} ( \| \nabla_x \phi \|_{H^{k_1}} + \| \nabla_x^{k_1} \psi \|_{H^{k_1}} ) \\
      \no & \qquad \times \sum_{I=1}^{k_1} ( \| \phi \|_{H^{k_1}} + \| \psi \|_{H^{k_1}} )^{I-1} ( \| \nabla_x \phi \|_{H^{k_2}} + \| \nabla_x^{k_2} \psi \|_{H^{k_2}} )  \sum_{I=1}^{k_2} ( \| \phi \|_{H^{k_2}} + \| \psi \|_{H^{k_2}} )^{I-1} \Bigg{]} \\
      \lesssim & |c_4| \| e^{\widehat{\rho}} \|_{L^\infty} \| v \|_{H^s} \| \widehat{\rho} \|_{H^s} ( \| \phi \|_{H^s} + \| \psi \|_{H^s} ) \big{[} 1 + ( \| \phi \|_{H^s} + \| \psi \|_{H^s} )^{s-1} \big{]}
    \end{align}
  for $s\geq 3$. We now estimate the term $K_{14}$ by utilizing the H\"older inequality, Sobolev embedding theory and the inequalities shown in Lemma \ref{Lm-rho-L^infty} as follows:
  \begin{equation}\label{Apriori-Est-3D-v-8}
    \begin{aligned}
      K_{14} \lesssim & |c_4| \| \nabla_x e^{\widehat{\rho}} \|_{L^\infty} \| \nabla_x^k v \|_{L^2} \| \nabla_x^k \widehat{\rho} \|_{L^2} + |c_4| \sum_{\substack{k_1+k_2=k \\ k_1 \geq 2, k_2 \geq 1}} \| \nabla_x^{k_1} e^{\widehat{\rho}} \|_{L^4} \| \nabla_x^{k_2+1} \widehat{\rho} \|_{L^4} \| \nabla_x^k v \|_{L^2} \\
      \lesssim & |c_4| \| e^{\widehat{\rho}} \|_{L^\infty} \| v \|_{H^s} \Bigg{[} \| \widehat{\rho} \|_{H^s} \| \widehat{\rho} \|_{H^3} + \sum_{\substack{k_1+k_2=k \\ k_1 \geq 2, k_2 \geq 1}} \sum_{I=1}^{k_1} \| \widehat{\rho} \|^I_{H^{k_1+1}} \| \nabla_x^{k_2 + 1} \widehat{\rho} \|_{H^1} \Bigg{]} \\
      \lesssim & |c_4| \| e^{\widehat{\rho}} \|_{L^\infty} \| v \|_{H^s} \| \widehat{\rho} \|_{H^s}^2 ( 1 + \| \widehat{\rho} \|^{s-2}_{H^s}  )
    \end{aligned}
  \end{equation}
  holds for $s \geq 3$. For the term $K_{15}$, it is derived from the inequalities in Lemma \ref{Lm-rho-L^infty} that
  \begin{equation*}
    \begin{aligned}
      K_{15} \lesssim & |c_4| \| \nabla_x \widehat{\rho} \|_{L^\infty} \sum_{\substack{k_1+k_3=k \\ k_1, k_3 \geq 1}} \| \nabla_x^{k_1} e^{\widehat{\rho}} \|_{L^4} \| \nabla_x^{k_3} ( \Omega \otimes \Omega ) \|_{L^4} \| \nabla_x^k v \|_{L^2} \\
      \lesssim & |c_4| \| e^{\widehat{\rho}} \|_{L^\infty} \| v \|_{H^s} \| \widehat{\rho} \|_{H^s}^2 ( 1 + \| \widehat{\rho} \|^{s-2}_{H^s}  ) \sum_{1 \leq k_3 \leq k-1} \| \nabla_x^{k_3} ( \Omega \otimes \Omega ) \|_{L^4} \,,
    \end{aligned}
  \end{equation*}
  where the norm $ \sum\limits_{1 \leq k_3 \leq k-1} \| \nabla_x^{k_3} ( \Omega \otimes \Omega ) \|_{L^4} $ can be computed by making use of the inequalities shown in Lemma \ref{Lm-Auxil-Ineq} as follows:
    \begin{align*}
      & \sum_{1 \leq k_3 \leq k-1} \| \nabla_x^{k_3} ( \Omega \otimes \Omega ) \|_{L^4} \lesssim \sum_{1 \leq k_3 \leq k-1} \Bigg{(} \| \nabla_x^{k_3} \Omega \|_{L^4} + \sum_{\substack{a+b=k_3 \\ a,b \geq 1}} \| \nabla_x^a \Omega \|_{L^\infty} \| \nabla_x^b \Omega \|_{L^4} \Bigg{)} \\
      \lesssim & \sum_{1 \leq k_3 \leq k-1} ( \| \nabla_x \phi \|_{H^{k_3}} + \| \nabla_x \psi \|_{H^{k_3}} ) \sum_{I=1}^{k_3} ( \|  \phi \|_{H^{k_3}} + \|  \psi \|_{H^{k_3}} )^{I-1} \\
      & + \sum_{1 \leq k_3 \leq k-1} \sum_{\substack{a+b=k_3 \\ a,b \geq 1}} ( \| \nabla_x \phi \|_{H^{a+1}} + \| \nabla_x \psi \|_{H^{a+1}} ) \sum_{I=1}^a ( \|  \phi \|_{H^{a+1}} + \| \psi \|_{H^{a+1}} )^{I-1} \\
      & \qquad \qquad \qquad \times ( \| \nabla_x \phi \|_{H^{b+1}} + \| \nabla_x \psi \|_{H^{b+1}} ) \sum_{J=1}^b ( \|  \phi \|_{H^{b+1}} + \| \psi \|_{H^{b+1}} )^{J-1} \\
      \lesssim & ( \| \phi \|_{H^s}  + \| \psi \|_{H^s} ) \big{[} 1 + ( \| \phi \|_{H^s}  + \| \psi \|_{H^s} )^{s-2} \big{]} \,.
    \end{align*}
  As a result, the estimation of the term $K_{15}$ is
  \begin{equation}\label{Apriori-Est-3D-v-9}
    K_{15} \lesssim |c_4| \| e^{\widehat{\rho}} \|_{L^\infty} \| v \|_{H^s} \| \widehat{\rho} \|_{H^s}^2 ( \| \phi \|_{H^s}  + \| \psi \|_{H^s} ) ( 1 + \| \widehat{\rho} \|^{s-2}_{H^s}  ) \big{[} 1 + ( \| \phi \|_{H^s}  + \| \psi \|_{H^s} )^{s-2} \big{]} \,.
  \end{equation}
  For the term $K_{16}$, it is easy to be derived from the inequalities \eqref{Bnds-rho-norms} in Lemma \ref{Lm-rho-L^infty} and the inequalities \eqref{Auxil-Ineq-1} in Lemma \ref{Lm-Auxil-Ineq} that
    \begin{align}\label{Apriori-Est-3D-v-10}
      	K_{16}
      \lesssim & |c_4| \| e^{\widehat{\rho}} \|_{L^\infty} \| \nabla_x^k v \|_{L^2} \Big\{ \| \nabla_x^k \widehat{\rho} \|_{L^2} \| \nabla_x ( \Omega \times \Omega ) \|_{L^\infty}  + \sum_{\substack{k_2+k_3=k \\ k_2 \geq 1, k_3 \geq 2}} \| \nabla_x^{k_2+1} \widehat{\rho} \|_{L^4} \| \nabla_x^{k_3} ( \Omega \otimes \Omega ) \|_{L^4} \Big\}
    \no\\
      \lesssim & |c_4| \| e^{\widehat{\rho}} \|_{L^\infty} \| v \|_{H^s} \| \widehat{\rho} \|_{H^s} ( \| \phi \|_{H^s}  + \| \psi \|_{H^s} ) \big{[} 1 + ( \| \phi \|_{H^s}  + \| \psi \|_{H^s} )^{s-2} \big{]}
    \end{align}
  for $s \geq 3$. By following the analogous arguments in the estimation of the term $K_{14}$, $K_{15}$ and $K_{16}$, one can derive that
  \begin{equation}\label{Apriori-Est-3D-v-11}
    K_{17} \lesssim |c_4| \| e^{\widehat{\rho}} \|_{L^\infty} \| v \|_{H^s} \| \widehat{\rho} \|_{H^s}^2 ( \| \phi \|_{H^s}  + \| \psi \|_{H^s} ) ( 1 + \| \widehat{\rho} \|^{s-3}_{H^s}  ) \big{[} 1 + ( \| \phi \|_{H^s}  + \| \psi \|_{H^s} )^{s-3} \big{]} \,.
  \end{equation}
  We plug the inequalities \eqref{Apriori-Est-3D-v-5}, \eqref{Apriori-Est-3D-v-6}, \eqref{Apriori-Est-3D-v-7}, \eqref{Apriori-Est-3D-v-8}, \eqref{Apriori-Est-3D-v-9}, \eqref{Apriori-Est-3D-v-10} and \eqref{Apriori-Est-3D-v-11} into the relation \eqref{Apriori-Est-3D-v-4} and then we gain
  \begin{equation}\label{Apriori-Est-3D-v-12}
    \begin{aligned}
      K_1 \lesssim & |c_4| \| e^{\widehat{\rho}} \|_{L^\infty} \| \nabla_x v \|_{H^s} \| \widehat{\rho} \|_{H^s} + \Big{\{}|c_4| \| e^{\widehat{\rho}} \|_{L^\infty} \| v \|_{H^s} \| \widehat{\rho} \|_{H^s} \\
       & \qquad \times ( \| \phi \|_{H^s}  + \| \psi \|_{H^s} + \| \widehat{\rho} \|_{H^s} ) ( 1 + \| \widehat{\rho} \|^{s-1}_{H^s}  ) \big{[} 1 + ( \| \phi \|_{H^s}  + \| \psi \|_{H^s} )^{s-1} \big{]} \Big{\}} \,.
    \end{aligned}
  \end{equation}

  Now we estimate the term $K_2$. It can be decomposed into seven parts:
    \begin{align}\label{Apriori-Est-3D-v-13}
      \no K_2 = & - c_4  \l \nabla_x^k [ e^{\widehat{\rho}} \nabla_x \phi \cdot \Omega_\phi \otimes \Omega ] , \nabla_x^k v \r \\
      \no = &  - c_4 \l \nabla_x^k e^{\widehat{\rho}} \nabla_x \phi \cdot \Omega_\phi \otimes \Omega , \nabla_x^k v \r - c_4 \l e^{\widehat{\rho}} \nabla_x^{k+1} \phi \cdot \Omega_\phi \otimes \Omega , \nabla_x^k v \r \\
      \no & - c_4 \l  e^{\widehat{\rho}} \nabla_x \phi \cdot \nabla_x^k ( \Omega_\phi \otimes \Omega ) , \nabla_x^k v \r - c_4 \sum_{\substack{k_1+k_2=k \\ k_1, k_2 \geq 1}} \l \nabla_x^{k_1} e^{\widehat{\rho}} \nabla_x^{k_2+1} \phi \cdot \Omega_\phi \otimes \Omega , \nabla_x^k v \r \\
       \no & - c_4 \sum_{\substack{k_1+k_3=k \\ k_1, k_3 \geq 1}} \l \nabla_x^{k_1} e^{\widehat{\rho}} \nabla_x \phi \cdot \nabla_x^{k_3} ( \Omega_\phi \otimes \Omega ) , \nabla_x^k v \r \\
       &- c_4 \sum_{\substack{k_2+k_3=k \\ k_2, k_3 \geq 1}} \l e^{\widehat{\rho}} \nabla_x^{k_2 + 1} \phi \cdot \nabla_x^{k_3} ( \Omega_\phi \otimes \Omega ) , \nabla_x^k v \r \\
      \no & - c_4 \sum_{\substack{k_1+k_2+k_3=k \\ k_1, k_2, k_3 \geq 1}} \l \nabla_x^{k_1} e^{\widehat{\rho}} \nabla_x^{k_2 + 1} \phi \cdot \nabla_x^{k_3} ( \Omega_\phi \otimes \Omega ) , \nabla_x^k v \r \\
      \no \equiv & \, K_{21} + K_{22} + K_{23} + K_{24} + K_{25} + K_{26} + K_{27} \,.
    \end{align}
    We need estimate the terms $K_{2i}\, (1 \leq i \leq 7)$ term by term. For the first two terms $K_{21}$ and $K_{22}$, one can easily derive from the H\"older inequality, Sobolev embedding theory and the inequalities \eqref{Bnds-rho-norms} in Lemma \ref{Lm-rho-L^infty} that
    \begin{equation}\label{Apriori-Est-3D-v-14}
      \begin{aligned}
        K_{21} + K_{22} \lesssim & |c_4| \| \nabla_x^k e^{\widehat{\rho}} \|_{L^2} \| \nabla_x \phi \|_{L^\infty} \| \nabla_x^k v \|_{L^2} + |c_4| \| e^{\widehat{\rho}} \|_{L^\infty} \| \nabla_x^{k+1} \phi \|_{L^2} \| \nabla_x^k v \|_{L^2} \\
        \lesssim & |c_4| \| e^{\widehat{\rho}} \|_{L^\infty} \| v \|_{H^s} \| \nabla_x \phi \|_{H^s} ( 1 + \| \widehat{\rho} \|^s_{H^s} ) \,.
      \end{aligned}
    \end{equation}
    For the term $K_{23}$, it is derived from the inequalities shown in Lemma \ref{Lm-Auxil-Ineq} that
      \begin{align}\label{Apriori-Est-3D-v-15}
        	K_{23}
        \lesssim & |c_4| \| e^{\widehat{\rho}} \|_{L^\infty} \| \nabla_x \phi \|_{L^\infty} \| \nabla_x^k ( \Omega_\phi \otimes \Omega ) \|_{L^2} \| \nabla_x^k v \|_{L^2}
      \no\\
        \lesssim & |c_4| \| e^{\widehat{\rho}} \|_{L^\infty} \| v \|_{H^s} \| \nabla_x \phi \|_{H^s} \Bigg{(} \| \nabla_x^k \Omega_\phi \|_{L^2} + \| \nabla_x^k \Omega \|_{L^2} + \sum_{\substack{a+b=k \\ a,b \geq 1}} \| \nabla_x^a \Omega_\phi \|_{L^4} \| \nabla_x^b \Omega \|_{L^4} \Bigg{)}
      \no\\
        \lesssim & |c_4| \| e^{\widehat{\rho}} \|_{L^\infty} \| v \|_{H^s} \| \nabla_x \phi \|_{H^s} \Bigg{[} \sum_{I=1}^k ( \| \phi \|_{H^k} + \| \psi \|_{H^k} )^I +
        \\\no
        & \qquad \sum_{\substack{a+b=k \\ a,b \geq 1}} \sum_{I=1}^a ( \| \phi \|_{H^{a+1}} + \| \psi \|_{H^{a+1}} )^I \sum_{J=1}^b ( \| \phi \|_{H^{b+1}} + \| \psi \|_{H^{b+1}} )^J \Bigg{]}
      \\\no
        \lesssim & |c_4| \| e^{\widehat{\rho}} \|_{L^\infty} \| v \|_{H^s} \| \nabla_x \phi \|_{H^s} ( \| \phi \|_{H^s}  + \| \psi \|_{H^s} ) \big{[} 1 + ( \| \phi \|_{H^s}  + \| \psi \|_{H^s} )^{s-1} \big{]} \Big{\}} \,.
      \end{align}
  For the term $K_{24}$, we have
  \begin{equation}\label{Apriori-Est-3D-v-16}
    \begin{aligned}
      K_{24} \lesssim & |c_4| \sum_{\substack{k_1+k_2=k \\ k_1, k_2 \geq 1}} \| \nabla_x^{k_1} e^{\widehat{\rho}} \|_{L^4} \| \nabla_x^{k_2+1} \phi \|_{L^4} \| \nabla_x^k v \|_{L^2} \\
      \lesssim & |c_4| \| e^{\widehat{\rho}} \|_{L^\infty} \| v \|_{H^s} \| \nabla_x \phi \|_{H^s} \| \widehat{\rho} \|_{H^s} ( 1 + \| \widehat{\rho} \|^{s-2}_{H^s} ) \,.
    \end{aligned}
  \end{equation}
  Here we make use of the inequalities \eqref{Bnds-rho-norms} in Lemma \ref{Lm-rho-L^infty}. For the terms $K_{25}$ and $K_{26}$, by the similar arguments in the inequalities \eqref{Apriori-Est-3D-v-9} and using the inequalities \eqref{Bnds-rho-norms} in Lemma \ref{Lm-rho-L^infty}, we can estimate that
  \begin{equation}\label{Apriori-Est-3D-v-17}
    K_{25} \lesssim |c_4| \| e^{\widehat{\rho}} \|_{L^\infty} \| v \|_{H^s} \| \nabla_x \phi \|_{H^s} \| \widehat{\rho} \|_{H^s} ( \| \phi \|_{H^s} + \| \psi \|_{H^s} ) ( 1 + \| \widehat{\rho} \|^{s-2}_{H^s} ) \big{[} 1 + ( \| \phi \|_{H^s}  + \| \psi \|_{H^s} )^{s-2} \big{]}\,,
  \end{equation}
  and
  \begin{equation}\label{Apriori-Est-3D-v-18}
    \begin{aligned}
      K_{26} \lesssim & |c_4| \| e^{\widehat{\rho}} \|_{L^\infty} \| \nabla_x^k v \|_{L^2} \sum_{\substack{k_2+k_3=k \\ k_2, k_3 \geq 1}} \| \nabla_x^{k_2+1} \phi \|_{L^4} \| \nabla_x^{k_2} ( \Omega_\phi \otimes \Omega ) \|_{L^4} \\
      \lesssim & |c_4| \| e^{\widehat{\rho}} \|_{L^\infty} \| v \|_{H^s} \| \nabla_x \phi \|_{H^s} ( \| \phi \|_{H^s} + \| \psi \|_{H^s} ) \big{[} 1 + ( \| \phi \|_{H^s}  + \| \psi \|_{H^s} )^{s-2} \big{]}\,.
    \end{aligned}
  \end{equation}
  Following the analogous calculation of the inequality \eqref{Apriori-Est-3D-v-11} and making use of the inequalities \eqref{Bnds-rho-norms} in Lemma \ref{Lm-rho-L^infty} yield that
  \begin{equation}\label{Apriori-Est-3D-v-19}
    K_{27} \lesssim |c_4| \| e^{\widehat{\rho}} \|_{L^\infty} \| v \|_{H^s} \| \nabla_x \phi \|_{H^s} \| \widehat{\rho} \|_{H^s} ( \| \phi \|_{H^s} + \| \psi \|_{H^s} ) ( 1 + \| \widehat{\rho} \|^{s-3}_{H^s} ) \big{[} 1 + ( \| \phi \|_{H^s}  + \| \psi \|_{H^s} )^{s-3} \big{]}\,.
  \end{equation}
  We plug the inequalities \eqref{Apriori-Est-3D-v-14}, \eqref{Apriori-Est-3D-v-15}, \eqref{Apriori-Est-3D-v-16}, \eqref{Apriori-Est-3D-v-17}, \eqref{Apriori-Est-3D-v-18}  and \eqref{Apriori-Est-3D-v-19} into the relation \eqref{Apriori-Est-3D-v-13}, and then we have
  \begin{equation}\label{Apriori-Est-3D-v-20}
    K_2 \lesssim |c_4| \| e^{\widehat{\rho}} \|_{L^\infty} \| v \|_{H^s} \| \nabla_x \phi \|_{H^s} ( 1 + \| \widehat{\rho} \|^{s }_{H^s} ) \big{[} 1 + ( \| \phi \|_{H^s}  + \| \psi \|_{H^s} )^{s } \big{]}\,.
  \end{equation}
  Since the terms $K_3$, $K_4$ and $K_5$ be of the same form of the term $K_2$, by the similar estimation of $K_2$, we can estimate that
  \begin{equation}\label{Apriori-Est-3D-v-21}
    K_4 \lesssim |c_4| \| e^{\widehat{\rho}} \|_{L^\infty} \| v \|_{H^s} \| \nabla_x \phi \|_{H^s} ( 1 + \| \widehat{\rho} \|^{s }_{H^s} ) \big{[} 1 + ( \| \phi \|_{H^s}  + \| \psi \|_{H^s} )^{s } \big{]}\,,
  \end{equation}
  and
  \begin{equation}\label{Apriori-Est-3D-v-22}
    K_3 + K_5 \lesssim |c_4| \| e^{\widehat{\rho}} \|_{L^\infty} \| v \|_{H^s} \| \nabla_x \psi \|_{H^s} ( 1 + \| \widehat{\rho} \|^{s }_{H^s} ) \big{[} 1 + ( \| \phi \|_{H^s}  + \| \psi \|_{H^s} )^{s } \big{]}\,.
  \end{equation}

  By substituting the inequalities \eqref{Apriori-Est-3D-v-12}, \eqref{Apriori-Est-3D-v-20}, \eqref{Apriori-Est-3D-v-21} and \eqref{Apriori-Est-3D-v-22} into the equality \eqref{Apriori-Est-3D-v-3}, one immediately obtain
  \begin{equation}\label{Apriori-Est-3D-v-23}
    \begin{aligned}
      & - \l \nabla_x^k [ e^{\widehat{\rho}} \mathcal{G} ( \widehat{\rho}, \phi, \psi ) ] , \nabla_x^k v \r \\
      \lesssim & |c_4| \| e^{\widehat{\rho}} \|_{L^\infty} \| v \|_{H^s} \| \widehat{\rho} \|_{H^s} ( \| \phi \|_{H^s}  + \| \psi \|_{H^s} ) ( 1 + \| \widehat{\rho} \|^{s - 1 }_{H^s} ) \big{[} 1 + ( \| \phi \|_{H^s}  + \| \psi \|_{H^s} )^{s-1} \big{]} \\
      + & |c_4| \| e^{\widehat{\rho}} \|_{L^\infty} ( | v \|_{H^s} + \| \widehat{\rho} \|_{H^s} ) ( \| \nabla_x v \|_{H^s} + \| \nabla_x \phi \|_{H^s} + \| \nabla_x \psi \|_{H^s} ) \\
      & \qquad \qquad \qquad \times ( 1 + \| \widehat{\rho} \|^{s-1}_{H^s} ) \big{[} 1 + ( \| \phi \|_{H^s}  + \| \psi \|_{H^s} )^{s-1} \big{]} \,.
    \end{aligned}
  \end{equation}

Consequently, by combining the inequalities \eqref{Apriori-Est-3D-v-1}, \eqref{Apriori-Est-3D-v-2} and \eqref{Apriori-Est-3D-v-23} together and summing up for all integer $0 \leq k \leq s$, one can derive the energy estimate of the $v$-equation of the SOH-NS system \eqref{SOH-NS-3D-SPT-Smp}
  \begin{equation}\label{Apriori-Est-3D-v}
    \begin{aligned}
      & \tfrac{1}{2} \tfrac{\d}{\d t} ( Re \| v \|^2_{H^s} ) + \| \nabla_x v \|^2_{H^s} \\
      \lesssim & Re \| \nabla_x v \|_{H^s} \| v \|^2_{H^s} + \Big{[} |c_4| \| e^{\widehat{\rho}} \|_{L^\infty} ( | v \|_{H^s} + \| \widehat{\rho} \|_{H^s} ) ( 1 + \| \widehat{\rho} \|^{s-1}_{H^s} )  \\
      & \qquad \qquad \times ( \| \nabla_x v \|_{H^s} + \| \nabla_x \phi \|_{H^s} + \| \nabla_x \psi \|_{H^s} )  \big{[} 1 + ( \| \phi \|_{H^s}  + \| \psi \|_{H^s} )^{s-1} \big{]} \Big{]} \\
      + & |c_4| \| e^{\widehat{\rho}} \|_{L^\infty} \| v \|_{H^s} \| \widehat{\rho} \|_{H^s} ( \| \phi \|_{H^s}  + \| \psi \|_{H^s} ) ( 1 + \| \widehat{\rho} \|^{s - 1 }_{H^s} ) \big{[} 1 + ( \| \phi \|_{H^s}  + \| \psi \|_{H^s} )^{s-1} \big{]} \,.
    \end{aligned}
  \end{equation}

{\smallskip\noindent \em\large Step 5. Closing the energy estimate.} We now close the energy estimate of the SOH-NS system \eqref{SOH-NS-3D-SPT-Smp}. It is implied by summing up for the inequalities \eqref{Apriori-Est-3D-rho}, \eqref{Apriori-Est-3D-phi}, \eqref{Apriori-Est-3D-psi} and \eqref{Apriori-Est-3D-v}  that
  \begin{equation}\label{Apriori-Est-3D-1}
    \begin{aligned}
      & \tfrac{1}{2} \tfrac{\d}{\d t} ( \| \widehat{\rho} \|^2_{H^s} + \| \phi \|^2_{H^s} + \| \psi \|^2_{H^s} + Re \| v \|^2_{H^s} ) + \gamma \| \nabla_x \phi \|^2_{H^s} + \gamma \| \nabla_x \psi \|^2_{H^s} + \| \nabla_x v \|^2_{H^s} \\
      \lesssim & \| \nabla_x v \|_{H^s} ( \| \widehat{\rho} \|^2_{H^s} + Re \| v \|^2_{H^s} ) + ( |a c_2| + \gamma + |\tfrac{a}{\kappa}| ) ( \| \nabla_x \phi \|_{H^s} + \| \nabla_x \psi \|_{H^s} ) \\
      & \qquad \qquad \times ( \| \phi \|_{H^s}  + \| \psi \|_{H^s} + \| \widehat{\rho} \|_{H^s} ) \big{[} 1 + ( \| \phi \|_{H^s}  + \| \psi \|_{H^s} )^{s+4} \big{]} \\
      & + |a c_1| ( \| \nabla_x \phi \|_{H^s} + \| \nabla_x \psi \|_{H^s} ) \| \widehat{\rho} \|_{H^s} ( 1 + \| \widehat{\rho} \|_{H^s} ) \big{[} 1 + ( \| \phi \|_{H^s}  + \| \psi \|_{H^s} )^{s} \big{]} \\
      & + ( |\widetilde{\lambda}| + 1 ) \| \nabla_x v \|_{H^s} ( \| \phi \|_{H^s}  + \| \psi \|_{H^s} ) \big{[} 1 + ( \| \phi \|_{H^s}  + \| \psi \|_{H^s} )^{s+3} \big{]} \\
      & + |c_4| \| e^{\widehat{\rho}} \|_{L^\infty} \| v \|_{H^s} \| \widehat{\rho} \|_{H^s} ( \| \phi \|_{H^s}  + \| \psi \|_{H^s} ) ( 1 + \| \widehat{\rho} \|^{s - 1 }_{H^s} ) \big{[} 1 + ( \| \phi \|_{H^s}  + \| \psi \|_{H^s} )^{s-1} \big{]} \\
      + & |c_4| \| e^{\widehat{\rho}} \|_{L^\infty} ( | v \|_{H^s} + \| \widehat{\rho} \|_{H^s} ) ( \| \nabla_x v \|_{H^s} + \| \nabla_x \phi \|_{H^s} + \| \nabla_x \psi \|_{H^s} ) \\
      & \qquad \qquad \qquad \times ( 1 + \| \widehat{\rho} \|^{s-1}_{H^s} ) \big{[} 1 + ( \| \phi \|_{H^s}  + \| \psi \|_{H^s} )^{s-1} \big{]} \,.
    \end{aligned}
  \end{equation}
  Recalling the definition of the energy functionals $\mathcal{E}(t)$ and $\mathcal{D} (t)$, the energy estimate \eqref{Apriori-Est-3D-1} reduce to
  \begin{equation}\label{Apriori-Est-3D-2}
    \tfrac{\d}{\d t} \mathcal{E} (t) + \mathcal{D} (t) \leq C ( 1 + \| e^{\widehat{\rho}} \|^2_{L^\infty} ) \mathcal{E}(t) [ 1 + \mathcal{E}^{3s}(t) ] \,,
  \end{equation}
  where we utilize the Young inequality, and the positive constant
  $$ C = C'(s) \big{[} ( 1 + |\widetilde{\lambda}| + \tfrac{1}{\sqrt{\gamma}} ( |a c_1| + |a c_2| + \gamma + |\tfrac{a}{\kappa}| ) + |c_4| ( 1 + \tfrac{1}{\sqrt{\gamma}} ) ( 1 + \tfrac{1}{\sqrt{Re}} ) )^2 + \tfrac{Re}{\sqrt{\gamma}} \big{]} > 0 $$
  for some constant $C'(s) > 0$.

  It remains to control the $L^\infty$-norm $ \| e^{\widehat{\rho}} \|^2_{L^\infty} $. As shown in Lemma \ref{Lm-rho-L^infty}, we let $f = - a c_1 ( \Omega_\phi \cdot \nabla_x \phi + \Omega_\psi \cdot \nabla_x \psi ) $. As a result, we have
  \begin{equation}\label{Apriori-Est-3D-3}
    \begin{aligned}
      \| e^{\widehat{\rho}} \|^2_{L^\infty} \leq &  \bar{\rho}^2 \exp \Big{(} 2 |a c_1| \int_0^t ( \| \nabla_x \phi \|_{L^\infty} + \| \nabla_x \psi \|_{L^\infty} ) \d \tau \Big{)} \\
      \leq &   \bar{\rho}^2 \exp \Big{(} 2 |a c_1| \int_0^t ( \| \phi \|_{H^s} + \|  \psi \|_{H^s} ) \d \tau \Big{)}
    \end{aligned}
  \end{equation}
  for $s \geq 3$. Consequently, the inequalities \eqref{Apriori-Est-3D-2} and \eqref{Apriori-Est-3D-3} imply the inequality \eqref{Aprori-Est-SOHNS-3D-SPT-Simp}. Then the proof of Proposition \ref{Prop-Apr-Est-3D} is finished.
\end{proof}

\section{Local Well-posedness of SOH-NS System \eqref{SOH-NS}}\label{sec:Local-WP}

In this section, based on the \emph{a priori} estimates in Section \ref{sec:Apriori-Est-3D}, we mainly justify the local existence of the SOH-NS system \eqref{SOH-NS}, i.e. to prove Theorem \ref{Thm-WP-SOHNS-3D}.

\begin{proof}[Proof of Theorem \ref{Thm-WP-SOHNS-3D}]

To complete the justification of Theorem \ref{Thm-WP-SOHNS-3D}, we only need to take the system \eqref{SOH-NS-3D-SPT-Smp} into consideration. The approximate system of the equations \eqref{SOH-NS-3D-SPT-Smp} with the initial conditions
  $$
  	( \widehat{\rho}, \phi, \psi, v )|_{t=0} = ( \ln \rho^{in}, \phi^{in}, \psi^{in}, v^{in} )
  $$
can be constructed by the following forms:
  \begin{equation}\label{Appr-Syst-SOHNS}
    \left\{
      \begin{array}{l}
        \partial_t \widehat{\rho}^\ep + \J_\ep [ ( a c_1 \Omega(\J_\ep \phi^\ep, \J_\ep \psi^\ep) + \J_\ep v^\ep ) \cdot \nabla_x \J_\ep \widehat{\rho}^\ep ]
        	\\[3pt]
        	\qquad \qquad + a c_1 \J_\ep [ \Omega_\phi (\J_\ep \phi^\ep, \J_\ep \psi^\ep) \cdot \nabla_x \J_\ep \phi^\ep + \Omega_\psi (\J_\ep \phi^\ep, \J_\ep \psi^\ep) \cdot \nabla_x \J_\ep \psi^\ep  ] = 0\,,
      \\[3pt]
        \partial_t \phi^\ep + \J_\ep [ ( a c_2 \Omega(\J_\ep \phi^\ep, \J_\ep \psi^\ep) + \J_\ep v^\ep ) \cdot \nabla_x \J_\ep \phi^\ep ] + \J_\ep \mathcal{H}_\phi (\J_\ep \widehat{\rho}^\ep, \J_\ep \phi^\ep, \J_\ep \psi^\ep) = 0 \,,
      \\[3pt]
        \partial_t \psi^\ep + \J_\ep [ ( a c_2 \Omega(\J_\ep \phi^\ep, \J_\ep \psi^\ep) + \J_\ep v^\ep ) \cdot \nabla_x \J_\ep \psi^\ep ] + \J_\ep \mathcal{H}_\psi (\J_\ep \widehat{\rho}^\ep, \J_\ep \phi^\ep, \J_\ep \psi^\ep) = 0 \,,
      \\[3pt]
        Re \partial_t v^\ep + Re \J_\ep ( \J_\ep v^\ep \cdot \nabla_x \J_\ep v^\ep ) - \Delta_x \J_\ep v^\ep + \nabla_x p^\ep
        \\[3pt]
        	\hspace*{7cm} + b \J_\ep [ e^{\J_\ep \widehat{\rho}^\ep} \mathcal{G} ( \J_\ep \widehat{\rho}^\ep , \J_\ep \phi^\ep, \J_\ep \psi^\ep ) ] = 0 \,,
      \\[3pt]
        \qquad \nabla_x \cdot v^\ep = 0 \,,
      \\[3pt]
        ( \widehat{\rho}^\ep, \phi^\ep, \psi^\ep, v^\ep )|_{t=0} = ( \J_\ep \ln \rho^{in}, \J_\ep \phi^{in}, \J_\ep \psi^{in}, \J_\ep v^{in} )\,.
      \end{array}
    \right.
  \end{equation}
where the mollifier $\J_\ep$ is defined as $ \J_\ep f = \mathcal{F}^{-1} \big{(} \mathbf{1}_{|\xi| \leq \frac{1}{\ep}} \mathcal{F} (f) (\xi) \big{)}$, in which the symbol $\mathcal{F}$ is the standard Fourier transform and $\mathcal{F}^{-1}$ represents its inverse transform, and the quantities $\mathcal{H}_\phi(\cdot, \cdot, \cdot, \cdot)$, $\mathcal{H}_\psi(\cdot, \cdot, \cdot, \cdot)$ and $\mathcal{G}(\cdot, \cdot, \cdot)$ are defined in \eqref{Terms-Def}.

By ODE theory, we know that there is a maximal $T_\ep > 0$ such that the approximate system \eqref{Appr-Syst-SOHNS} has a unique solution $(\widehat{\rho}^\ep , \phi^\ep, \psi^\ep, v^\ep) \in C([0,T_\ep); H^s(\R^2))$. Since the mollifier $\J_\ep$ satisfies $\J_\ep^2 = \J_\ep$,  we observe that $(\J_\ep \widehat{\rho}^\ep , \J_\ep \phi^\ep , \J_\ep \psi^\ep , \J_\ep v^\ep)$ is also a solution to the system \eqref{Appr-Syst-SOHNS}. Then the uniqueness implies that $(\J_\ep \widehat{\rho}^\ep , \J_\ep \phi^\ep , \J_\ep \psi^\ep , \J_\ep v^\ep) = (\widehat{\rho}^\ep , \phi^\ep , \psi^\ep , v^\ep)$. As a consequence, the solution $(\widehat{\rho}^\ep , \phi^\ep , \psi^\ep, v^\ep)$ to the approximate system \eqref{Appr-Syst-SOHNS} also solves the system
  \begin{equation}\label{Appr-Syst-SOHNS-Smp}
    \left\{
      \begin{array}{l}
        \partial_t \widehat{\rho}^\ep + \J_\ep [ ( a c_1 \Omega( \phi^\ep, \psi^\ep) + v^\ep ) \cdot \nabla_x \widehat{\rho}^\ep ]
        \\[3pt]
	        \qquad + a c_1 \J_\ep [ \Omega_\phi ( \phi^\ep, \psi^\ep) \cdot \nabla_x \phi^\ep + \Omega_\psi ( \phi^\ep, \psi^\ep) \cdot \nabla_x \psi^\ep  ] = 0\,,
	    \\[3pt]
        \partial_t \phi^\ep + \J_\ep [ ( a c_2 \Omega( \phi^\ep, \psi^\ep) + v^\ep ) \cdot \nabla_x \phi^\ep ] + \J_\ep \mathcal{H}_\phi ( \widehat{\rho}^\ep, \phi^\ep, \psi^\ep) = 0 \,,
      \\[3pt]
        \partial_t \psi^\ep + \J_\ep [ ( a c_2 \Omega( \phi^\ep, \psi^\ep) + v^\ep ) \cdot \nabla_x \psi^\ep ] + \J_\ep \mathcal{H}_\psi ( \widehat{\rho}^\ep, \phi^\ep, \psi^\ep) = 0 \,,
      \\[3pt]
        Re \partial_t v^\ep + Re \J_\ep ( v^\ep \cdot \nabla_x v^\ep ) - \Delta_x v^\ep + \nabla_x p^\ep + b \J_\ep [ e^{ \widehat{\rho}^\ep} \mathcal{G} ( \widehat{\rho}^\ep , \phi^\ep, \psi^\ep ) ] = 0 \,,
      \\[3pt]
        \qquad \nabla_x \cdot v^\ep = 0 \,,
      \\[3pt]
        ( \widehat{\rho}^\ep, \phi^\ep, \psi^\ep, v^\ep )|_{t=0} = ( \J_\ep \ln \rho^{in}, \J_\ep \phi^{in}, \J_\ep \psi^{in}, \J_\ep v^{in} )\,.
      \end{array}
    \right.
  \end{equation}

  As shown in the proceeding of the derivation of \emph{a priori} estimate in Proposition \ref{Prop-Apr-Est-3D} in Section \ref{sec:Apriori-Est-3D}, we can derive the energy estimate of the approximate system \eqref{Appr-Syst-SOHNS-Smp}
  \begin{equation}\label{Unf-Bnd-1}
    \tfrac{\d}{\d t} \mathcal{E}_\ep (t) + \mathcal{D}_\ep (t) \leq C \Big{[} 1 + \exp \big{(} C \int_0^t \mathcal{E}_\ep^\frac{1}{2} (\tau) \d \tau \big{)} \Big{]} \mathcal{E}_\ep (t) [ 1 + \mathcal{E}_\ep^{3s} (t) ]
  \end{equation}
  for all $t \in [ 0 , T_\ep )$, where the energy functionals $\mathcal{E}_\ep(t)$ and $\mathcal{D}_\ep(t)$ are
  \begin{equation*}
   \begin{aligned}
      \mathcal{E}_\ep(t) = &  \| \widehat{\rho}^\ep \|^2_{H^s} + \| \phi^\ep \|^2_{H^s} + \| \psi^\ep \|^2_{H^s} + Re \| v^\ep \|^2_{H^s}\,, \\
       \mathcal{D}_\ep (t) = & \gamma \| \nabla_x \phi^\ep \|^2_{H^s} + \gamma \| \nabla_x \psi^\ep \|^2_{H^s} + \| \nabla_x v^\ep \|^2_{H^s} \,,
   \end{aligned}
  \end{equation*}
  and $\mathcal{E}_\ep(0) \leq \mathcal{E}^{in} \equiv \| \ln \rho^{in} \|^2_{H^s} + \| \phi^{in} \|^2_{H^s} + \| \psi^{in} \|^2_{H^s} + Re \| v^{in} \|^2_{H^s} < \infty$.

  For any fixed $M > 2 \mathcal{E}^{in}$, we define $ T^\ep_M = \Big{ \{ } \tau \in [ 0, T_\ep ) ; \sup_{t \in [ 0, \tau ]} \mathcal{E}_\ep (t) \leq M \Big{ \} } $. By the continuity of $\mathcal{E}_\ep(t)$ in $[0,T_\ep )$, we immediately know that $T^\ep_M > 0$. Then the inequality \eqref{Unf-Bnd-1} implies that for all $t \in [ 0, T^\ep_M ]$,
  \begin{equation}\label{Unf-Bnd-2}
    \tfrac{\d}{\d t} \mathcal{E}_\ep(t) + \mathcal{D}_\ep(t) \leq C [ 1 + \exp ( C M^\frac{1}{2} t ) ] ( 1+M^{3s} ) \mathcal{E}_\ep(t) \,,
  \end{equation}
  which can be solved (thanks to the ODE theory) that
  \begin{equation*}
    \begin{aligned}
      \mathcal{E}_\ep (t) \leq & \mathcal{E}_\ep(0) \exp \Big{[} C ( 1 + M^{3s} ) t + \tfrac{1 + M^{3s}}{M^\frac{1}{2}} \big{(} \exp(C M^\frac{1}{2} t) - 1 \big{)} \Big{]} \\
      \leq & \mathcal{E}^{in} \exp \Big{[} C ( 1 + M^{3s} ) t + \tfrac{1 + M^{3s}}{M^\frac{1}{2}} \big{(} \exp(C M^\frac{1}{2} t) - 1 \big{)} \Big{]} \,.
    \end{aligned}
  \end{equation*}

We claim that there exists some $T = T(M, \mathcal{E}^{in}, C) > 0$ such that $\mathcal{E}_\ep (t) \leq \tfrac{M}{2}$ holds uniformly for all $t \in [0,T]$. Indeed, it requires to  have
  $$
  	\mathcal{E}^{in} \exp \Big{[} C ( 1 + M^{3s} ) t + \tfrac{1 + M^{3s}}{M^\frac{1}{2}} \big{(} \exp(C M^\frac{1}{2} t) - 1 \big{)} \Big{]} \leq \tfrac{M}{2} \,,
  $$
namely,
  $$
  	C ( 1 + M^{3s} ) t + \tfrac{1 + M^{3s}}{M^\frac{1}{2}} \big{(} \exp(C M^\frac{1}{2} t) - 1 \big{)} \leq \ln \tfrac{M}{2 \mathcal{E}^{in}} \,.
  $$
Then we assume that
  \begin{equation*}
    \left\{
      \begin{array}{l}
        C ( 1 + M^{3s} ) t \leq \tfrac{1}{2} \ln \tfrac{M}{2 \mathcal{E}^{in}} \,,
      \\[5pt]
        \tfrac{1 + M^{3s}}{M^\frac{1}{2}} \big{(} \exp(C M^\frac{1}{2} t) - 1 \big{)} \leq \tfrac{1}{2} \ln \tfrac{M}{2 \mathcal{E}^{in}} \,,
      \end{array}
    \right.
  \end{equation*}
from which we solve that $t \leq T  \equiv \min \bigg{\{} \tfrac{\ln \tfrac{M}{2 \mathcal{E}^{in}}}{ 2 C ( 1 + M^{3s} ) } , \tfrac{1}{ C M^\frac{1}{2} } \ln \Big{[} 1 + \tfrac{M^\frac{1}{2}}{ 2 ( 1 + M^{3s} ) \ln \tfrac{M}{2 \mathcal{E}^{in}} } \Big{]} \bigg{\}} $ and this number $T > 0$ is exact what we want to find. So the claim holds.

By the definition of $T^\ep_M$, it follows that $T^\ep_M \geq T > 0$. Therefore, for all $t \in [0,T]$, $\mathcal{E}_\ep(t) \leq \tfrac{1}{2} M < M$ and the inequality \eqref{Unf-Bnd-2} reduces to
  \begin{equation*}
    \mathcal{E}_\ep(t) + \int_0^t \mathcal{D}_\ep(s) \d s \leq C(M,T) \equiv \tfrac{C}{16} \Big{[} 1 + \exp ( \tfrac{\sqrt{2}}{2} C M^\frac{1}{2} T ) \Big{]} M ( 8 + M^{3s} )
  \end{equation*}
  for all $t \in [0,T]$, hence we have
  \begin{equation}\label{Unf-Bnd}
    \begin{aligned}
      \sup_{0 \leq t \leq T} \Big{(} \| \widehat{\rho}^\ep \|^2_{H^s} & + \| \phi^\ep \|^2_{H^s} + \| \psi^\ep \|^2_{H^s} + Re \| v^\ep \|^2_{H^s} \Big{)} \\
      & + \gamma \| \nabla_x \phi^\ep \|^2_{L^2(0,T;H^s)} + \gamma \| \nabla_x \psi^\ep \|^2_{L^2(0,T;H^s)} + \| \nabla_x v^\ep \|^2_{H^s} \leq C(M,T) \,.
    \end{aligned}
  \end{equation}
  Thus, based on the uniform bound \eqref{Unf-Bnd}, we finish the proof of Theorem \ref{Thm-WP-SOHNS-3D} by compactness arguments.
  \end{proof}

  We remark that the local well-posedness of the SOH-NS system in $\R^2$ can also easily be proved by the analogous arguments of the three dimension case. More specifically, one just make use of the polar coordinates transform to deal with the geometric constraint $|\Omega| = 1$, and then employing the energy method in proving the local solution to the 3D system \eqref{SOH-NS} can gain the goal.







\section{A Priori Estimates Uniformly in $\varepsilon$} 
\label{sec:2}

Theorem \ref{thm:limit} is based on the following key lemma, which represents \emph{a priori} estimates for the remainder system \eqref{eq:remainder} uniformly in $\eps$.
\begin{lemma}[Uniform-in-$\eps$ Estimate] \label{lemm:apriori-uniform}
	Define the energy functionals as follows,
		\begin{align}
		  E = & \nm{v_R^\eps}_{H^s_x}^2 + \sum_{0\le k+l =m \le s} \nm{\tfrac{\nabla_x^k \nabla_{\omega}^l f_R^\eps}{M_0}}_\M^2, \\\no
		  D = & \nm{\nabla_x v_R^\eps}_{H^s_x}^2 + \frac{1}{\eps} \sum_{0\le k+l =m \le s} \nm{\nabla_{\omega} (\tfrac{\nabla_x^k \nabla_{\omega}^l f_R^\eps)}{M_0}}_\M^2.
		\end{align}
	Then there exist some constants $\eps_0>0$ and $c_0 > 0$ such that, for any $\eps \in (0,\ \eps_0)$ and $t \in [0,\ T]$, we have
		\begin{align}\label{esm:apriori-uniform}
			\tfrac{1}{2} \tfrac{\d}{\d t} E + c_0 D \le C_0 (E + 1) + C_0 (\eps + \eps E) D,
		\end{align}
	where $C_0$ depends only on the initial data $\|(\rho_0^{in}, \Omega_0^{in}, v_0^{in})\|_{H^m_x}$\ $(\text{with } m>s+4)$.
\end{lemma}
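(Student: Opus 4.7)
The plan is to derive separate energy inequalities for $v_R^\eps$ and for each mixed derivative $\nabla_x^k\nabla_\omega^l f_R^\eps$ with $k+l\le s$, then combine them and absorb all right-hand terms into the form claimed in \eqref{esm:apriori-uniform}. For the hydrodynamic part, I would act $\nabla_x^k$ ($0\le k\le s$) on the momentum equation in \eqref{eq:remainder}, pair with $\nabla_x^k v_R^\eps$ in $L^2_x$ and integrate by parts. Using $\nabla_x\cdot v_R^\eps=0$, the Laplacian produces the dissipation $\|\nabla_x v_R^\eps\|_{H^s_x}^2$, the linear transport terms involving $v_0$ are bounded by $\|v_0\|_{H^m_x}\,E$, the quadratic Navier--Stokes nonlinearity contributes $\sqrt\eps\,\|v_R^\eps\|_{L^\infty_x}\|\nabla_x v_R^\eps\|_{H^s_x}\|v_R^\eps\|_{H^s_x}$ which is absorbed into $\eps E D$, and the coupling $b\nabla_x\cdot Q_{f_R^\eps}$ is controlled by $\|f_R^\eps/M_0\|_M\|\nabla_x v_R^\eps\|_{H^s_x}$ via Cauchy--Schwarz, hence swallowed by $\delta D+C_\delta E$.

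For the kinetic part, the key inner product is $\sskm{\cdot}{\cdot/M_0}$, which is equivalent to the weighted norm $\|\cdot/M_0\|_M^2=\iint|\cdot|^2/M_0\,\d\omega\d x$. The crucial identity is
\begin{equation*}
\sskm{\mathcal L_{\Omega_0}g}{g/M_0}=-\iint M_0\bigl|\nabla_\omega(g/M_0)\bigr|^2\d\omega\d x,
\end{equation*}
which delivers the $\frac{1}{\eps}$ part of $D$. I would apply $\nabla_x^k\nabla_\omega^l$ to the $f_R^\eps$ equation; because $M_0=M_{\Omega_0(t,x)}$ depends on $(t,x)$, the commutator $[\nabla_x^k\nabla_\omega^l,\mathcal L_{\Omega_0}]$ produces lower-order terms involving derivatives of $\Omega_0$. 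Using the hypothesis $m>s+4$, all such commutator terms, together with the transport contributions $\nabla_x\cdot(u_0 f_R^\eps)$ and $\nabla_\omega\cdot(\mathcal F_0 f_R^\eps)$, are bounded by $C_0(\|(\rho_0,\Omega_0,v_0)\|_{H^m_x})\cdot(E+\sqrt E\cdot\sqrt D)$, the latter being absorbed by $\delta D+C_\delta E$. The time derivative of $M_0$ that appears when one unfreezes $1/M_0$ in the energy is controlled analogously via $\|\partial_t\Omega_0\|_{H^{m-1}_x}$, itself dominated through the SOH-NS equations.

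The main obstacle, and the place where the structure of the expansion becomes essential, is the singular source $\frac{1}{\sqrt\eps}h_0$. Because $(\rho_0,\Omega_0,v_0)$ solves SOH-NS, the projections of $h_0$ onto the constant $1$ and onto the generalized collision invariants $h(\omega\cdot\Omega_0)A\cdot\omega$ ($A\perp\Omega_0$) vanish; equivalently $h_0/M_0$ is orthogonal in $\sskm{\cdot}{\cdot}$ to $\mathcal N(\mathcal L_{\Omega_0})$. Combined with $f_R^\eps\in\Phi_0$ and the weighted Poincaré inequality on $\mathbb S^2$
\begin{equation*}
\Bigl\|\tfrac{g}{M_0}-\Pi_{\mathcal N}\tfrac{g}{M_0}\Bigr\|_M^2\le C\bigl\|\nabla_\omega\bigl(\tfrac{g}{M_0}\bigr)\bigr\|_M^2,
\end{equation*}
this gives
\begin{equation*}
\tfrac{1}{\sqrt\eps}\bigl|\sskm{h_0}{f_R^\eps/M_0}\bigr|\le \tfrac{1}{\sqrt\eps}\,C\|h_0\|\,\sqrt\eps\,\sqrt D\le C_0+\delta D,
\end{equation*}
so the formally singular term is absorbed into the $C_0(E+1)$ and $c_0 D$ pieces. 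The same argument, extended to each $\nabla_x^k\nabla_\omega^l h_0$ via Leibniz and the $H^m$ bound on $(\rho_0,\Omega_0,v_0)$, handles all higher derivatives.

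Finally, $h_1$ contributes $v_R^\eps\cdot\nabla_x f_0+\sqrt\eps v_R^\eps\cdot\nabla_x f_R^\eps$ plus the analogous $\omega$-divergence terms. The linear piece is bounded by $\|v_R^\eps\|_{H^s_x}\|f_0\|_{H^{s+1}_x}$ hence by $C_0(E+1)$, while the quadratic piece, after Sobolev embeddings ($H^2\hookrightarrow L^\infty$) and the $\omega$-integration by parts that transfers one $\nabla_\omega$ onto $f_R^\eps/M_0$, is bounded by $\sqrt\eps\,\|v_R^\eps\|_{H^s_x}\|\nabla_\omega(f_R^\eps/M_0)\|_M\|f_R^\eps/M_0\|_M\lesssim \sqrt\eps\cdot E\cdot\sqrt{\eps D}\le \eps E\,D$, giving exactly the $\eps E D$ factor on the right-hand side. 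Summing over $k+l\le s$, adding the $v_R^\eps$ estimate, and choosing $\delta$ small enough to reserve a positive multiple of $D$ on the left yields \eqref{esm:apriori-uniform} with $c_0>0$ and a constant $C_0$ depending only on $\|(\rho_0^{in},\Omega_0^{in},v_0^{in})\|_{H^m_x}$ through Theorem \ref{Thm-WP-SOHNS-3D}.
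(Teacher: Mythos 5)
Your overall architecture (energy estimate on $v_R^\eps$, then on $\nabla_x^k\nabla_\omega^l f_R^\eps$ in the weighted $\|\cdot/M_0\|_\M$ norm, the commutator with $\mathcal L_{\Omega_0}$, the weighted Poincar\'e inequality, the cancellation of $\tfrac{1}{\sqrt\eps}$ against the dissipation) matches the paper's. However, there is a concrete gap in how you bound the commutator terms coming from $\tfrac{1}{\eps}[\nabla_x^k\nabla_\omega^l,\mathcal L_{\Omega_0}]f_R^\eps$. You claim these are dominated by $C_0\big(E+\sqrt E\sqrt D\big)$, which would then be absorbed by $\delta D+C_\delta E$. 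But the actual size is strictly worse. After the integration by parts and the weighted Poincar\'e inequality, a representative commutator piece reads
\begin{align*}
\frac{1}{\eps}\,\nm{\nabla_x^{s_1}\Omega_0}_{L^\infty}\,\nm{\nabla_\omega\big(\tfrac{\nabla_x^{s_2}f_R^\eps}{M_0}\big)}_\M\,\nm{\nabla_\omega\big(\tfrac{\nabla_x^{s}f_R^\eps}{M_0}\big)}_\M
\ \sim\ C(\Omega_0)\,\sqrt{D_{s_2}}\,\sqrt{D_{s}},
\end{align*}
which is of order $D$, not $\sqrt E\sqrt D$: the two factors of $\sqrt\eps$ coming from Poincar\'e exactly cancel the $\tfrac{1}{\eps}$, leaving a term proportional to $D$ with constant $C(\Omega_0)$. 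Since that constant is determined by $\|\Omega_0\|_{H^m_x}$ and is in general larger than $1$, it cannot be absorbed by the dissipation $D$ on the left by simply choosing $\delta$ small; summing over $0\le k\le s$ even inflates it by a combinatorial factor. Without a further device your estimate produces $\tfrac{\d}{\d t}E+(1-C)D\le\ldots$ with $1-C<0$, so no positive $c_0$ survives.

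The paper closes this loop by passing to $\eta_0^m$-weighted energy and dissipation functionals, i.e. $E_{\eta_0}=\|v_R^\eps\|_{H^s_x}^2+\sum_{k+l=m\le s}\eta_0^m\|\nabla_x^k\nabla_\omega^l f_R^\eps/M_0\|_\M^2$ and similarly for $D_{\eta_0}$, which are equivalent to $E,D$ once $\eta_0$ is fixed. Because the commutator always couples a level-$s$ dissipation to a strictly lower level $s_2<s$ (and likewise with $l_2<l$ for the $\omega$-commutators), the weights introduce an extra small factor $\eta_0^{(s-s_2)/2}$ after rebalancing, so the whole commutator contribution is $\eta_0^{1/2}D_{\eta_0}$ and can be absorbed by choosing $\eta_0$ small independently of $\eps$. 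You need to incorporate this (or an equivalent downward-cascade weighting) before the estimate actually closes; as written, the claim that the $\tfrac{1}{\eps}$-commutators are $C_0(E+\sqrt E\sqrt D)$ is incorrect, and the rest of the argument is built on it. The orthogonality of $h_0$ to $\mathcal N(\mathcal L_{\Omega_0})$ that you invoke is true (it is what makes the limit equations solvable) but is not what carries the $\tfrac{1}{\sqrt\eps}h_0$ estimate; that step only needs $f_R^\eps\in\Phi_0$ plus the Poincar\'e inequality, exactly as the paper does and as your displayed computation in fact uses.
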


We give here the proof of Theorem \ref{thm:limit} by virtue of Lemma \ref{lemm:apriori-uniform}. Given initial data $(f_R^{\eps,in},\ v_R^{\eps,in})$, it follows from the local existence result of the remainder equations \eqref{eq:remainder} (see Lemma \ref{lemm:local-Re} below) that, there exists a unique solution $(f_R^\eps(t,x,\omega), v_R^\eps(t,x))$ on $[0,\ T_\eps]$ with $T_\eps > 0$ be the maximal lifespan of this local solution.

Denote $E_T = 2 e^{2 C_0 T} (E(0) + 2 C_0 T)$. By Lemma \ref{lemm:apriori-uniform}, it holds for any $t \in [0,\ T_\eps]$, that
	\begin{align}\label{inequ-1}
		\tfrac{1}{2} \tfrac{\d}{\d t} E + c_0 D \le C_0 (E + 1) + C_0 (\eps + \eps E) D.
	\end{align}

Let $T_1 = \sup_{t \in [0,\ T_\eps]} \left\{ E(t) \le E_T \right\}$, then choose
	\begin{align}
	  \eps_0 = \min \left\{ \frac{1}{6 C_0},\ \frac{1}{6 C_0 E_T} \right\}.
	\end{align}
If $T_\eps < T$, the Gr\"onwall inequality enables us to get from \eqref{inequ-1} that, for any $\eps \in (0,\ \eps_0)$ and $t \in [0,\ T_1]$,
	\begin{align}
		E(t) \le e^{2 C_0 t} (E(0) + 2 C_0 t) \le \tfrac{1}{2} E_T.
	\end{align}
This implies that the solution can be continued beyond the time $T_\eps$, which is a contradiction with the maximal property of $T_\eps$. So, it follows that $T_\eps = T$ and hence the proof of Theorem \ref{thm:limit} are completed.  \qed

We will use the following weighted Poincar\'e inequality.
\begin{lemma}[Weighted Poincar\'e Inequality] \label{lemm:Poincare-inequ}
	We have the following weighted Poincar\'e inequality, for $f \in \hx{k} L^2_\omega$, $g \in \hx{k} H^l_{\omega} $,
		\begin{align}\label{eq:Poincare-inequ-x}
			\nm{\tfrac{\nabla_x^k f}{\M}}_\M \le\ & \Lambda \nm{\nabla_\omega (\tfrac{\nabla_x^k f}{\M})}_\M, \\[3pt] \label{eq:Poincare-inequ-xw}
			\nm{\tfrac{\nabla_x^k \nabla_\omega^l g}{\M}}_\M \le\ & \widetilde\Lambda \sum_{l' \le l-1} \nm{\nabla_\omega (\tfrac{\nabla_x^k \nabla_\omega^{l'} g}{\M})}_\M,
		\end{align}
		where $\Lambda,\ \widetilde\Lambda$ are the Poincar\'e constants independent of $\Omega_0$.
\end{lemma}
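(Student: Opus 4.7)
The plan is to derive both bounds from the classical weighted Poincar\'e inequality on $\mathbb{S}^2$ with respect to the measure $M_\Omega \d\omega$: for any $\Omega \in \mathbb{S}^2$ and any $u$ with $\int_{\mathbb{S}^2} u\, M_\Omega \d\omega = 0$, one has $\int_{\mathbb{S}^2} |u|^2 M_\Omega \d\omega \le \Lambda_\kappa^2 \int_{\mathbb{S}^2} |\nabla_\omega u|^2 M_\Omega \d\omega$, where $\Lambda_\kappa^{-2}$ is the first positive eigenvalue of the self-adjoint operator $\mathcal{L}_\Omega^*$ from Definition \ref{def:GCI}. The first step I would nail down is that $\Lambda_\kappa$ does not depend on $\Omega$: every VMF distribution is obtained from a fixed reference $M_{e_3}$ by a rotation $R \in SO(3)$, and the pullback $u \mapsto u\circ R$ is a unitary isomorphism of the weighted $L^2$ spaces that conjugates $\mathcal{L}_{e_3}^*$ to $\mathcal{L}_\Omega^*$; hence the spectral gap depends only on the concentration parameter $\kappa$.

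For (\ref{eq:Poincare-inequ-x}), I would apply the pointwise bound to $u(\omega) := \nabla_x^k f(x,\omega)/M_0(x,\omega)$ at each fixed $(t,x)$. The orthogonality condition reads $\int_{\mathbb{S}^2}\nabla_x^k f\, \d\omega = \nabla_x^k\int_{\mathbb{S}^2} f\, \d\omega = 0$, which holds because the lemma is invoked on $f = f_R^\eps \in \Phi_0$. Integrating over $x \in \R^3$ then yields (\ref{eq:Poincare-inequ-x}) with $\Lambda = \Lambda_\kappa$.

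For (\ref{eq:Poincare-inequ-xw}) I would set up a descending induction on $l$. The elementary quotient-rule identity
\begin{equation*}
	\nabla_\omega\!\Bigl(\tfrac{\nabla_x^k \nabla_\omega^{l-1} g}{M_0}\Bigr) = \tfrac{\nabla_x^k \nabla_\omega^l g}{M_0} - \tfrac{\nabla_x^k \nabla_\omega^{l-1} g}{M_0}\otimes \tfrac{\nabla_\omega M_0}{M_0},
\end{equation*}
combined with the uniform pointwise bound $|\nabla_\omega M_0/M_0| = \kappa\,|P_{\omega^\perp}\Omega_0| \le \kappa$ (from the explicit form $M_0 = Z^{-1}\exp(\kappa\, \omega\cdot\Omega_0)$), yields after rearrangement and taking the weighted norm
\begin{equation*}
	\nm{\tfrac{\nabla_x^k \nabla_\omega^l g}{\M}}_\M \le \nm{\nabla_\omega\bigl(\tfrac{\nabla_x^k \nabla_\omega^{l-1} g}{\M}\bigr)}_\M + \kappa\, \nm{\tfrac{\nabla_x^k \nabla_\omega^{l-1} g}{\M}}_\M.
\end{equation*}
Iterating this estimate down to $l'=0$ and closing the chain at the base case by applying (\ref{eq:Poincare-inequ-x}) to $\nabla_x^k g$ produces exactly the sum on the right-hand side of (\ref{eq:Poincare-inequ-xw}); the constant $\widetilde\Lambda$ then absorbs the resulting geometric series in $\kappa$ together with the terminal Poincar\'e constant $\Lambda_\kappa$.

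The main obstacle I expect is guaranteeing that the Poincar\'e constant is genuinely uniform in the unknown field $\Omega_0(t,x)$; without the rotation-invariance observation above, the pointwise-in-$x$ estimate would yield a constant $\Lambda(\Omega_0(t,x))$ that is useless once one integrates in $x$ or tries to obtain uniform-in-$\eps$ bounds. A secondary subtlety, easily checked, is that the $l'=0$ step in the iteration requires $\int g\,\d\omega = 0$, which is inherited from the class $\Phi_0$ in which the lemma is ultimately applied inside Lemma \ref{lemm:apriori-uniform}.
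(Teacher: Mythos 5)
The paper does not actually give a proof of Lemma \ref{lemm:Poincare-inequ}; it is stated and then invoked, so there is no author argument to compare against. Taken on its own, your proposal is correct and fills this gap. For \eqref{eq:Poincare-inequ-x}, the pointwise-in-$x$ reduction to the spectral gap of $\mathcal L_{\Omega_0}^*$ on $\mathbb S^2$ and the rotation-conjugacy argument for $\Omega$-independence of the constant are both sound; one could also note, more bluntly, that $e^{-\kappa}\le Z M_{\Omega_0}\le e^{\kappa}$ uniformly in $(t,x,\omega)$, so the weighted constant is controlled directly by the flat Poincar\'e constant on $\mathbb S^2$. For \eqref{eq:Poincare-inequ-xw}, the descending iteration via the quotient rule together with $|\nabla_\omega M_0/M_0|=\kappa|P_{\omega^\perp}\Omega_0|\le\kappa$ is the right mechanism, and an important point you handle correctly is that the intermediate levels $1\le l'\le l-1$ cannot be closed by a direct Poincar\'e inequality (since $\int_{\mathbb S^2}\nabla_\omega^{l'} g\,\d\omega$ is not generically zero — on the sphere $\int\nabla_\omega h\,\d\omega=(n-1)\int \omega\,h\,\d\omega$), so the orthogonality is only used at $l'=0$. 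You also correctly flag that the zero-mean hypothesis $\int f\,\d\omega=0$ (resp. $\int g\,\d\omega=0$) is missing from the lemma's stated hypotheses but is supplied by $\Phi_0$ everywhere the lemma is applied; as written the lemma is false without that condition (take $f$ constant in $\omega$), so this is a genuine, if minor, gap in the paper's formulation that your proof makes explicit.
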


\subsection{Higher-Order Derivatives Estimates on Velocity Field} 
\label{sub:2-1}


For simplicity, we set the coefficients $Re$ and $b$ to $1$ in the following texts. Apply the derivatives operator $\nabla_x^s$ to equation \eqref{eq:remainder}$_2$, then we can write that
	\begin{multline}\label{eq:derivative-v}
		\nabla_x^s (\p_t v_R^\eps + v_0 \cdot \nabla_x v_R^\eps + v_R^\eps \cdot \nabla_x v_0 + \sqrt\eps v_R^\eps \cdot \nabla_x v_R^\eps) \\
	      + \nabla_x \cdot (\nabla_x^s Q_{f_R^\eps}) = - \nabla_x^{s+1} p_R^\eps + \Delta_x \nabla_x^s v_R^\eps.
	\end{multline}
Taking $L^2_x$ inner product with $\nabla_x^s v_R^\eps$, it follows directly that
	\begin{align}
		\skp{\p_t \nabla_x^s v_R^\eps}{\nabla_x^s v_R^\eps} = \frac{1}{2} \frac{\d}{\d t} \nm{\nabla_x^s v_R^\eps}_{L^2_x}^2.
	\end{align}

Noticing the fact $\nabla_x \cdot v_R^\eps=0$, we can infer from performing integrations by part that
	\begin{align}
			& \skp{\nabla_x^s (v_R^\eps \cdot \nabla_x v_R^\eps)}{\nabla_x^s v_R^\eps} \\\no
		= & \skp{v_R^\eps \cdot \nabla_x^{s+1} v_R^\eps}{\nabla_x^s v_R^\eps}
			+ \skp{\nabla_x^s v_R^\eps \cdot \nabla_x v_R^\eps}{\nabla_x^s v_R^\eps}
			+ \sum_{\scrpt{s=s_1 + s_2}{1 \le s_1 \le s-1}} \skp{\nabla_x^{s_1} v_R^\eps \cdot \nabla_x^{s_2 +1} v_R^\eps}{\nabla_x^s v_R^\eps}
		\\\no
		\ls & 0 + \nm{\nabla_x v_R^\eps}_{L^\infty_x} \nm{\nabla_x^s v_R^\eps}_{L^2_x}^2
			+ \sum_{\scrpt{s=s_1 + s_2}{1 \le s_1,s_2 \le s-1}} \nm{\nabla_x^{s_1} v_R^\eps}_{L^\infty_x} \nm{\nabla_x^{s_2+1} v_R^\eps}_{L^2_x} \nm{\nabla_x^s v_R^\eps}_{L^2_x}
		\\\no
		\ls & \nm{\nabla_x v_R^\eps}_{H^s_x} \nm{\nabla_x^s v_R^\eps}_{L^2_x}^2,
	\end{align}
where we have used the H{\"o}lder inequality, the Sobolev embedding inequalities, and the assumption $s \ge 2$.

Since $v_0$ is also divergence free, by similar and easier arguments, we get
	\begin{align}
		  \skp{\nabla_x^s (v_0 \cdot \nabla_x v_R^\eps)}{\nabla_x^s v_R^\eps}
		= & \skp{v_0 \cdot \nabla_x^{s+1} v_R^\eps}{\nabla_x^s v_R^\eps}
			+ \sum_{\scrpt{s=s_1 + s_2}{1 \le s_1 \le s}} \skp{\nabla_x^{s_1} v_0 \cdot \nabla_x^{s_2 +1} v_R^\eps}{\nabla_x^s v_R^\eps}
		\\\no
		\ls & \nm{v_0}_{H^{s+2}_x} \nm{\nabla_x^s v_R^\eps}_{L^2_x}^2,
	\end{align}
and
	\begin{align}
			\skp{\nabla_x^s (v_R^\eps \cdot \nabla_x v_0)}{\nabla_x^s v_R^\eps}
		= \sum_{s=s_1 + s_2} \skp{\nabla_x^{s_1} v_R^\eps \cdot \nabla_x^{s_2 +1} v_0}{\nabla_x^s v_R^\eps}
		\ls \nm{\nabla_x v_0}_{H^{s+2}_x} \nm{\nabla_x^s v_R^\eps}_{L^2_x}^2.
	\end{align}

Recalling the definition of $Q_f$ and the weighted Poincar\'e inequality \eqref{eq:Poincare-inequ-x}, we have
	\begin{align}
		\abs{\skp{\nabla_x \cdot (\nabla_x^s Q_{f_R^\eps})}{\nabla_x^s v_R^\eps}}
		= & \abs{\iint (\omega \otimes \omega -\tfrac{1}{3} {\rm Id}) \tfrac{\nabla_x^s f_R^\eps}{M_0} \ \nabla_x^{s+1} v_R^\eps M_0 \d \omega \d x} \\\no
		\ls & \nm{\nabla_x^{s+1} v_R^\eps}_{L^2_x} \nm{\tfrac{\nabla_x^s f_R^\eps}{M_0}}_\M \\\no
		\ls & \nm{\nabla_x^{s+1} v_R^\eps}_{L^2_x} \nm{\nabla_{\omega} (\tfrac{\nabla_x^s f_R^\eps}{M_0})}_\M.
	\end{align}

Considering the right-hand side of equation \eqref{eq:derivative-v}, it follows that
	\begin{align}
		\skp{\Delta_x \nabla_x^s v_R^\eps}{\nabla_x^s v_R^\eps} = - \nm{\nabla_x^{s+1} v_R^\eps}_{L^2_x}^2,
	\end{align}
which, together with the above inequalities, enables us to get the estimate for the velocity field, as follows
	\begin{align}\label{esm:derivatives-v}
		& \tfrac{1}{2} \tfrac{\d}{\d t} \nm{\nabla_x^s v_R^\eps}_{L^2_x}^2 + \nm{\nabla_x^{s+1} v_R^\eps}_{L^2_x}^2
	\\\no
		\ls_{_{C_0}} & \nm{\nabla_x^s v_R^\eps}_{L^2_x}^2 + \sqrt\eps \nm{\nabla_x v_R^\eps}_{H^s_x} \nm{\nabla_x^s v_R^\eps}_{L^2_x}^2
			+ \nm{\nabla_x^{s+1} v_R^\eps}_{L^2_x} \nm{\nabla_{\omega} (\tfrac{\nabla_x^s f_R^\eps}{M_0})}_\M,
	\end{align}
where the notation $A \ls_{C_0} B$ means that there exists some constant $C_0$ such that $A \le C_0 B$, and the constant $C_0$ here can be chosen such that $C_0 \ge C |v_0|_{H^{s+3}_x}$.


\subsection{Higher-Order Derivatives Estimates on Pure Spatial Variables} 
\label{sub:2-2}



Applying the derivatives operator $\nabla_x^s$ to equation \eqref{eq:remainder}$_1$, we have
	\begin{align}\label{eq:derivative-f-x}
	 		\p_t \nabla_x^s f_R^\eps + \nabla_x^s (u_0 \cdot \nabla_x f_R^\eps) + \nabla_x^s [\nabla_{\omega} \cdot (\mathcal{F}_0 f_R^\eps)]
    = \frac{1}{\eps} \nabla_x^s \mathcal{L}_{\Omega_0} f_R^\eps - \frac{1}{\sqrt\eps} \nabla_x^s h_0 - \nabla_x^s h_1.
	\end{align}
Multiplying by $\tfrac{\nabla_x^s f_R^\eps}{M_0}$ on both sides of the above equation, and integrating with respect to variables $x$ and $\omega$, it is straightforward to get
	\begin{align}
		\sskp{\p_t \nabla_x^s f_R^\eps}{\tfrac{\nabla_x^s f_R^\eps}{M_0}}
		= \frac{1}{2} \frac{\d}{\d t} \nm{\tfrac{\nabla_x^s f_R^\eps}{M_0}}_\M^2
			+ \frac{1}{2} \iint \kappa \omega \p_t \Omega_0 \abs{\tfrac{\nabla_x^s f_R^\eps}{M_0}}^2 M_0 \d\omega \d x,
	\end{align}
and
	\begin{align}
		& \sskp{\nabla_x^s (u_0 \cdot \nabla_x f_R^\eps)}{\tfrac{\nabla_x^s f_R^\eps}{M_0}} \\\no
		=\ & \sskp{u_0 \cdot \nabla_x^{s+1} f_R^\eps}{\tfrac{\nabla_x^s f_R^\eps}{M_0}}
			+ \sum_{\scriptscriptstyle s= s_1 + s_2, 1\le s_1 \le s} \sskp{\nabla_x^{s_1} v_0 \cdot \nabla_x^{s_2+1} f_R^\eps}{\tfrac{\nabla_x^s f_R^\eps}{M_0}} \\\no
		=\ & \iint (v_0+a \omega) \kappa \omega \nabla_x \Omega_0 \abs{\tfrac{\nabla_x^s f_R^\eps}{M_0}}^2 M_0 \d\omega \d x
			+ \sum_{\scriptscriptstyle s= s_1 + s_2, 1\le s_1 \le s} \sskp{\nabla_x^{s_1} v_0 \cdot \nabla_x^{s_2+1} f_R^\eps}{\tfrac{\nabla_x^s f_R^\eps}{M_0}} \\\no
		\ls \ & \|v_0\|_{L^\infty_x} \|\nabla_x \Omega_0\|_{L^\infty_x} \nm{\tfrac{\nabla_x^s f_R^\eps}{M_0}}_\M^2
				+ \sum_{\scriptscriptstyle\stackrel{s= s_1 + s_2}{1\le s_1 \le s, 0\le s_2 \le s-1}}
					\|\nabla_x^{s_1} v_0\|_{L^\infty_x} \nm{\tfrac{\nabla_x^{s_2 +1} f_R^\eps}{M_0}}_\M \nm{\tfrac{\nabla_x^s f_R^\eps}{M_0}}_\M,
	\end{align}
where we have used the fact that $v_0$ is divergence free and the H\"{o}lder inequality.

For the third term, it follows from $\mathcal{F}_0 = \mathcal{P}_{\omega^\perp} (F_0 + B(v_0)\omega)$ that
	\begin{align}
		\sskp{\nabla_x^s [\nabla_{\omega} \cdot (\mathcal{F}_0 f_R^\eps)]}{\tfrac{\nabla_x^s f_R^\eps}{M_0}}
		=\ & \sum_{s= s_1 + s_2} \sskp{\mathcal{P}_{\omega^\perp} \nabla_x^{s_1} (F_0 + B(v_0)\omega) \nabla_x^{s_2} f_R^\eps}{\nabla_{\omega} (\tfrac{\nabla_x^s f_R^\eps}{M_0})} \\\no
		\ls \ & \sum_{s= s_1 + s_2} (|\nabla_x^{s_1} F_0|_{L^\infty_x} + |\nabla_x^{s_1} B(v_0)|_{L^\infty_x}) \nm{\tfrac{\nabla_x^{s_2} f_R^\eps}{M_0}}_\M \nm{\nabla_{\omega} (\tfrac{\nabla_x^s f_R^\eps}{M_0})}_\M.
	\end{align}

We next deal with the terms on the right-hand side. Notice the commutation formula
	\begin{align*}
	  \nabla_x^s \mathcal{L}_{\Omega_0} f
	  = \nabla_x^s [\nabla_{\omega} \cdot (\nabla_{\omega} f - \kappa \mathcal{P}_{\omega^\perp} \Omega_0 f)]
	  = \mathcal{L}_{\Omega_0} \nabla_x^s f - \sum_{\scrpt{s= s_1 + s_2}{1\le s_1 \le s}} \nabla_{\omega} \cdot (\kappa \mathcal{P}_{\omega^\perp} \nabla_x^{s_1} \Omega_0 \nabla_x^{s_2} f),
	\end{align*}
which, combining with the H\"{o}lder inequality, enables us to infer that
	\begin{align}
		& \sskp{\tfrac{1}{\eps} \nabla_x^s \mathcal{L}_{\Omega_0} f_R^\eps}{\tfrac{\nabla_x^s f_R^\eps}{M_0}} \\\no
		= & \frac{1}{\eps} \sskp{\mathcal{L}_{\Omega_0} \nabla_x^s f_R^\eps}{\tfrac{\nabla_x^s f_R^\eps}{M_0}}
			 - \frac{1}{\eps} \sum_{\scrpt{s= s_1 + s_2}{1\le s_1 \le s}} \sskp{\nabla_{\omega} \cdot (\kappa \mathcal{P}_{\omega^\perp} \nabla_x^{s_1} \Omega_0 \nabla_x^{s_2} f_R^\eps)}{\tfrac{\nabla_x^s f_R^\eps}{M_0}} \\\no
		= & - \frac{1}{\eps} \nm{\nabla_{\omega} (\tfrac{\nabla_x^s f_R^\eps}{M_0})}_\M^2
				 + \frac{1}{\eps} \sum_{\scrpt{s= s_1 + s_2}{1\le s_1 \le s}} \iint \kappa \mathcal{P}_{\omega^\perp} \nabla_x^{s_1} \Omega_0 \tfrac{\nabla_x^{s_2} f_R^\eps}{M_0} \nabla_{\omega} (\tfrac{\nabla_x^s f_R^\eps}{M_0}) M_0 \d \omega \d x \\\no
		\le & - \frac{1}{\eps} \nm{\nabla_{\omega} (\tfrac{\nabla_x^s f_R^\eps}{M_0})}_\M^2
		    + \frac{1}{\eps} C \sum_{\scrpt{s= s_1 + s_2}{1\le s_1 \le s}} \nm{\nabla_x^{s_1} \Omega_0}_{L^\infty} \nm{\tfrac{\nabla_x^{s_2} f_R^\eps}{M_0}}_\M \nm{\nabla_{\omega} (\tfrac{\nabla_x^s f_R^\eps}{M_0})}_\M \\\no
		\le & - \frac{1}{\eps} \nm{\nabla_{\omega} (\tfrac{\nabla_x^s f_R^\eps}{M_0})}_\M^2
		    + \frac{1}{\eps} C \sum_{\scrpt{s= s_1 + s_2}{1\le s_1 \le s}}
		    								\nm{\nabla_x^{s_1} \Omega_0}_{L^\infty}
		    								\nm{\nabla_{\omega} (\tfrac{\nabla_x^{s_2} f_R^\eps}{M_0})}_\M
		    								\nm{\nabla_{\omega} (\tfrac{\nabla_x^s f_R^\eps}{M_0})}_\M,
	\end{align}
where in the last line we have used the weighted Poincar\'e inequality \eqref{eq:Poincare-inequ-x} in Lemma \ref{lemm:Poincare-inequ} and the equivalence between the two norms $\nm{\cdot}_{L^2_{x,\omega}}$ and $\nm{\cdot}_\M$.

At the same time, the weighted Poincar\'e inequality \eqref{eq:Poincare-inequ-x} also yields that
	\begin{align}
	  \sskp{\tfrac{1}{\sqrt\eps} \nabla_x^s h_0}{\tfrac{\nabla_x^s f_R^\eps}{M_0}}
	  \ls \nm{\tfrac{\nabla_x^s h_0}{M_0}}_\M \cdot
	  		\tfrac{1}{\sqrt\eps} \nm{\tfrac{\nabla_x^s f_R^\eps}{M_0}}_\M
	  \ls \nm{\tfrac{\nabla_x^s h_0}{M_0}}_\M \cdot
	  		\tfrac{1}{\sqrt\eps} \nm{\nabla_{\omega} (\tfrac{\nabla_x^s f_R^\eps}{M_0})}_\M.
	\end{align}

We are now left to estimate the contributions of $$\nabla_x^s h_1 = \nabla_x^s \left\{ v_R^\eps \cdot \nabla_x (f_0 + \sqrt\eps f_R^\eps) + \nabla_\omega \cdot [\mathcal{P}_{\omega^\perp}B(v_R^\eps) \omega (f_0 + \sqrt\eps f_R^\eps)] \right\}.$$ Some straightforward calculations imply that
	\begin{align}
	  \sskp{\nabla_x^s (v_R^\eps \cdot \nabla_x f_0)}{\tfrac{\nabla_x^s f_R^\eps}{M_0}}
	  \ls & \sum_{\scrpt{s= s_1 + s_2}{0\le s_1 \le s}} \nm{\nabla_x^{s_1} v_R^\eps}_{L^2_x}
		    								\nm{\tfrac{\nabla_x^{s_2 +1} f_0}{M_0}}_{L^\infty_x L^2_{\M}}
		    								\nm{\tfrac{\nabla_x^s f_R^\eps}{M_0}}_\M, \\\no
		\sskp{\nabla_x^s [\nabla_\omega \cdot (\mathcal{P}_{\omega^\perp}B(v_R^\eps) \omega f_0)]}{\tfrac{\nabla_x^s f_R^\eps}{M_0}}
		\ls & \sum_{\scrpt{s= s_1 + s_2}{0\le s_1 \le s}} \nm{\nabla_x^{s_1 +1} v_R^\eps}_{L^2_x}
		    								\nm{\tfrac{\nabla_x^{s_2} f_0}{M_0}}_{L^\infty_x L^2_{\M}}
		    								\nm{\nabla_{\omega} (\tfrac{\nabla_x^s f_R^\eps}{M_0})}_\M.
	\end{align}

Considering the last two terms of order $\sqrt\eps$, it follows from the fact $v_R^\eps$ is divergence free, and some similar but more delicate process that,
	\begin{align}
		& \sskp{\nabla_x^s (v_R^\eps \cdot \nabla_x f_R^\eps)}{\tfrac{\nabla_x^s f_R^\eps}{M_0}} \\\no
		=\ & \sskp{v_R^\eps \cdot \nabla_x^{s+1} f_R^\eps}{\tfrac{\nabla_x^s f_R^\eps}{M_0}}
			+ \sskp{\nabla_x^s v_R^\eps \cdot \nabla_x f_R^\eps}{\tfrac{\nabla_x^s f_R^\eps}{M_0}}
			+ \sum_{\scrpt{s= s_1 + s_2}{1\le s_1 \le s-1}} \sskp{\nabla_x^{s_1} v_R^\eps \cdot \nabla_x^{s_2+1} f_R^\eps}{\tfrac{\nabla_x^s f_R^\eps}{M_0}} \\\no
		=\ & \frac{1}{2} \iint v_R^\eps (\kappa \omega \nabla_x \Omega_0) \abs{\tfrac{\nabla_x^s f_R^\eps}{M_0}}^2 M_0 \d\omega \d x
			+ \nm{\nabla_x^s v_R^\eps}_{L^4_x} \nm{\nabla_x f_R^\eps}_{L^4_x L^2_{\omega}} \nm{\tfrac{\nabla_x^s f_R^\eps}{M_0}}_{L^2_{x,\omega}} \\\no
			& + \sum_{\scrpt{s= s_1 + s_2}{1\le s_1 \le s-1}} \nm{\nabla_x^{s_1} v_R^\eps}_{L^\infty_x}  \nm{\tfrac{\nabla_x^{s_2+1} f_R^\eps}{M_0}}_\M \nm{\tfrac{\nabla_x^s f_R^\eps}{M_0}} \\\no
		\ls \ & \|\nabla_x \Omega_0\|_{L^\infty_x} \|v_R^\eps\|_{H^2_x} \nm{\tfrac{\nabla_x^s f_R^\eps}{M_0}}_\M^2
				+ \nm{\nabla_x^{s+1} v_R^\eps}_{L^2_x} \nm{\tfrac{\nabla_x^2 f_R^\eps}{M_0}}_\M \nm{\tfrac{\nabla_x^s f_R^\eps}{M_0}}_\M \\\no
				& + \sum_{\scriptscriptstyle\stackrel{s= s_1 + s_2}{1\le s_1,s_2 \le s-1}}
					|\nabla_x^{s_1} v_R^\eps|_{H^2_x} \nm{\tfrac{\nabla_x^{s_2 +1} f_R^\eps}{M_0}}_\M \nm{\tfrac{\nabla_x^s f_R^\eps}{M_0}}_\M,
	\end{align}	
where we have used the Sobolev embedding inequalities and the equivalence between the two norms $\nm{\cdot}_{L^2_{x,\omega}}$ and $\nm{\cdot}_\M$. That also entails that the last term can be controlled as follows,
	\begin{align}
	  	& \sskp{\nabla_x^s [\nabla_\omega \cdot (\mathcal{P}_{\omega^\perp}B(v_R^\eps) \omega f_R^\eps)]}{\tfrac{\nabla_x^s f_R^\eps}{M_0}} \\\no
		= & \sskp{\mathcal{P}_{\omega^\perp}B(v_R^\eps) \omega \nabla_x^s f_R^\eps}{\nabla_\omega (\tfrac{\nabla_x^s f_R^\eps}{M_0})}
			+ \sskp{\mathcal{P}_{\omega^\perp} \nabla_x^s B(v_R^\eps) \omega f_R^\eps}{\nabla_\omega (\tfrac{\nabla_x^s f_R^\eps}{M_0})} \\\no
			& + \sum_{\scrpt{s= s_1 + s_2}{1\le s_1 \le s-1}}
					\sskp{\mathcal{P}_{\omega^\perp} \nabla_x^{s_1} B(v_R^\eps) \omega \nabla_x^{s_2} f_R^\eps}{\nabla_\omega (\tfrac{\nabla_x^s f_R^\eps}{M_0})} \\\no
		\ls & \nm{\nabla_x v_R^\eps}_{H^2_x} \nm{\tfrac{\nabla_x^s f_R^\eps}{M_0}}_\M \nm{\nabla_\omega (\tfrac{\nabla_x^s f_R^\eps}{M_0})}_\M
			+ \nm{\nabla_x^{s+1} v_R^\eps}_{L^2_x} \nm{\tfrac{\nabla_x^2 f_R^\eps}{M_0}}_\M \nm{\nabla_\omega (\tfrac{\nabla_x^s f_R^\eps}{M_0})}_\M \\\no
			& + \sum_{\scrpt{s= s_1 + s_2}{1\le s_1 \le s-1}} \nm{\nabla_x^{s_1+2} v_R^\eps}_{L^2_x}
				\nm{\tfrac{\nabla_x^{s_2 +1} f_R^\eps}{M_0}}_\M \nm{\nabla_\omega (\tfrac{\nabla_x^s f_R^\eps}{M_0})}_\M.
	\end{align}

Choose
	\begin{align*}
	  C_0
	  \ge & C \big( 1 +\nm{\p_t \Omega_0}_{L^\infty_x} + \nm{v_0}_{L^\infty_x} \nm{\nabla_x \Omega_0}_{L^\infty_x} + \nm{\nabla_x^s v_0}_{L^\infty_x} + \nm{\nabla_x^s \Omega_0}_{L^\infty_x} \\
	  	& + \nm{\nabla_x^s F_0}_{L^\infty_{x,\omega}} + \nm{\nabla_x^s B(v_0)}_{L^\infty_x}
	  		+ \nm{\frac{\nabla_x^s h_0}{M_0}}_\M + \nm{\frac{\nabla_x^s f_0}{M_0}}_{L^\infty_x L^2_\M} \big),
	\end{align*}
then we have derived the estimates on the pure spatial variables that
	\begin{align}\label{esm:derivatives-f-x}
	  & \tfrac{1}{2} \tfrac{\d}{\d t} \nm{\tfrac{\nabla_x^s f_R^\eps}{M_0}}_\M^2 + \tfrac{1}{\eps} \nm{\nabla_\omega (\tfrac{\nabla_x^s f_R^\eps}{M_0})}_\M^2 \\\no
	\ls_{_{C_0}} & \nm{\tfrac{\nabla_x^s f_R^\eps}{M_0}}_\M^2
	  	+ \sum_{0 \le s_2 \le s-1} \nm{\tfrac{\nabla_x^{s_2 +1} f_R^\eps}{M_0}}_\M \nm{\tfrac{\nabla_x^s f_R^\eps}{M_0}}_\M
	  	+ \sum_{0 \le s_2 \le s} \nm{\tfrac{\nabla_x^{s_2} f_R^\eps}{M_0}}_\M \nm{\nabla_\omega (\tfrac{\nabla_x^s f_R^\eps}{M_0})}_\M
	  \\\no
	  & + \sqrt\eps |v_R^\eps|_{H^2_x} \nm{\tfrac{\nabla_x^s f_R^\eps}{M_0}}_\M \nm{\nabla_\omega (\tfrac{\nabla_x^s f_R^\eps}{M_0})}_\M
	   + \sqrt\eps \nm{\nabla_x^{s+1} v_R^\eps}_{L^2_x} \nm{\tfrac{\nabla_x^2 f_R^\eps}{M_0}}_\M \nm{\tfrac{\nabla_x^s f_R^\eps}{M_0}}_\M
	  \\\no
	  & + \sqrt\eps \nm{\nabla_x^{s+1} v_R^\eps}_{L^2_x} \sum_{1\le s_2 \le s-1}
					\nm{\tfrac{\nabla_x^{s_2 +1} f_R^\eps}{M_0}}_\M \nm{\tfrac{\nabla_x^s f_R^\eps}{M_0}}_\M
    \\\no
		& + \sqrt\eps \nm{\nabla_x v_R^\eps}_{H^2_x} \nm{\tfrac{\nabla_x^s f_R^\eps}{M_0}}_\M \nm{\nabla_\omega (\tfrac{\nabla_x^s f_R^\eps}{M_0})}_\M
		  + \sqrt\eps \nm{\nabla_x^{s+1} v_R^\eps}_{L^2_x} \nm{\tfrac{\nabla_x^2 f_R^\eps}{M_0}}_\M \nm{\nabla_\omega (\tfrac{\nabla_x^s f_R^\eps}{M_0})}_\M
    \\\no
	  & + \sqrt\eps \nm{\nabla_x^{s+1} v_R^\eps}_{L^2_x} \sum_{1\le s_2 \le s-1}
	   			\nm{\tfrac{\nabla_x^{s_2 +1} f_R^\eps}{M_0}}_\M \nm{\nabla_\omega (\tfrac{\nabla_x^s f_R^\eps}{M_0})}_\M
	  \\\no
	  & + \frac{1}{\eps} \sum_{0 \le s_2 \le s-1} \nm{\nabla_{\omega} (\tfrac{\nabla_x^{s_2} f_R^\eps}{M_0})}_\M \nm{\nabla_{\omega} (\tfrac{\nabla_x^s f_R^\eps}{M_0})}_\M
	  	+ \tfrac{1}{\sqrt\eps} \nm{\nabla_{\omega} (\tfrac{\nabla_x^s f_R^\eps}{M_0})}_\M
	  \\\no
	  &	+ \nm{\nabla_x^s v_R^\eps}_{L^2_x} \nm{\tfrac{\nabla_x^s f_R^\eps}{M_0}}_\M
	  	+ \nm{\nabla_x^{s +1} v_R^\eps}_{L^2_x} \nm{\nabla_{\omega} (\tfrac{\nabla_x^s f_R^\eps}{M_0})}_\M.
 	\end{align}


\subsection{Higher-Order Derivatives Estimates on Mixed Variables} 
\label{sub:2-3}



Apply the mixed derivatives operator $\nabla_x^k \nabla_{\omega}^l$ with $k+l=s,\ l \ge 1$ to equation \eqref{eq:remainder}$_1$, then we get
	\begin{multline}\label{eq:derivative-f-xw}
	 		\p_t \nabla_x^k \nabla_{\omega}^l f_R^\eps + \nabla_x^k \nabla_{\omega}^l (u_0 \cdot \nabla_x f_R^\eps) + \nabla_x^k \nabla_{\omega}^l [\nabla_{\omega} \cdot (\mathcal{F}_0 f_R^\eps)] \\
    = \frac{1}{\eps} \nabla_x^k \nabla_{\omega}^l \mathcal{L}_{\Omega_0} f_R^\eps - \frac{1}{\sqrt\eps} \nabla_x^k \nabla_{\omega}^l h_0 - \nabla_x^k \nabla_{\omega}^l h_1.
	\end{multline}

Taking $L^2_{x,\omega}$ inner product with the quantity $\tfrac{\nabla_x^k \nabla_{\omega}^l f_R^\eps}{M_0}$ yields that
	\begin{align}
		\sskp{\p_t \nabla_x^k \nabla_{\omega}^l f_R^\eps}{\tfrac{\nabla_x^k \nabla_{\omega}^l f_R^\eps}{M_0}}
		= \frac{1}{2} \frac{\d}{\d t} \nm{\tfrac{\nabla_x^k \nabla_{\omega}^l f_R^\eps}{M_0}}_\M^2
			+ \frac{1}{2} \iint \kappa \omega \p_t \Omega_0 \abs{\tfrac{\nabla_x^k \nabla_{\omega}^l f_R^\eps}{M_0}}^2 M_0 \d\omega \d x,
	\end{align}
and
	\begin{align}
			& \sskp{\nabla_x^k \nabla_{\omega}^l (u_0 \cdot \nabla_x f_R^\eps)}{\tfrac{\nabla_x^k \nabla_{\omega}^l f_R^\eps}{M_0}} \\\no
		= & \sskp{(v_0 + a \omega) \cdot \nabla_x^{k+1} \nabla_{\omega}^l f_R^\eps}{\tfrac{\nabla_x^k \nabla_{\omega}^l f_R^\eps}{M_0}}
			+ \sum_{\scrpt{k= k_1 + k_2}{1\le k_1 \le k}} \sskp{\nabla_x^{k_1} v_0 \cdot \nabla_x^{k_2+1} \nabla_{\omega}^l f_R^\eps}{\tfrac{\nabla_x^k \nabla_{\omega}^l f_R^\eps}{M_0}} \\\no
			& + \sskp{a \nabla_x^{k+1} \nabla_{\omega}^{l-1} f_R^\eps}{\tfrac{\nabla_x^k \nabla_{\omega}^l f_R^\eps}{M_0}}
		\\\no
		\ls & \|v_0\|_{L^\infty_x} \|\nabla_x \Omega_0\|_{L^\infty_x} \nm{\tfrac{\nabla_x^k \nabla_{\omega}^l f_R^\eps}{M_0}}_\M^2
				+ \sum_{0\le k_2 \le k-1} \|\nabla_x^k v_0\|_{L^\infty_x} \nm{\tfrac{\nabla_x^{k_2 +1} \nabla_{\omega}^l f_R^\eps}{M_0}}_\M \nm{\tfrac{\nabla_x^k \nabla_{\omega}^l f_R^\eps}{M_0}}_\M \\\no
				& + \nm{\tfrac{\nabla_x^{k+1} \nabla_{\omega}^{l-1} f_R^\eps}{M_0}}_\M \nm{\tfrac{\nabla_x^k \nabla_{\omega}^l f_R^\eps}{M_0}}_\M
	\end{align}
where we have used the fact that $v_0$ is divergence free and the H\"{o}lder inequality. Note that by the Sobolev embedding inequality we have $|\nabla_x^k v_0|_{L^\infty_x} \le C |v_0|_{H^{s+1}_x}$ since $k + 2\le s -l +2\le s+1$.

For the third term, it follows that
	\begin{align}
			& \sskp{\nabla_x^k \nabla_{\omega}^l [\nabla_{\omega} \cdot (\mathcal{F}_0 f_R^\eps)]}{\tfrac{\nabla_x^k \nabla_{\omega}^l f_R^\eps}{M_0}} \\\no
		= & \sskp{\nabla_x^k \nabla_{\omega}^l [(F_0 + B(v_0)\omega) f_R^\eps]}{\nabla_{\omega} (\tfrac{\nabla_x^k \nabla_{\omega}^l f_R^\eps}{M_0})}
	\\\no
		= & \sum_{k = k_1 + k_2} \sskp{\nabla_x^{k_1} [F_0 + B(v_0)\omega] \nabla_x^{k_2} \nabla_{\omega}^l f_R^\eps + \nabla_x^{k_1} B(v_0) \nabla_x^{k_2} \nabla_{\omega}^{l-1} f_R^\eps}{\nabla_{\omega} (\tfrac{\nabla_x^k \nabla_{\omega}^l f_R^\eps}{M_0})} \\\no
		\ls & \sum_{k = k_1 + k_2} (\|\nabla_x^{k_1} F_0\|_{L^\infty_x} + \|\nabla_x^{k_1} B(v_0)\|_{L^\infty_x}) \nm{\tfrac{\nabla_x^{k_2} \nabla_{\omega}^l f_R^\eps}{M_0}}_\M \nm{\nabla_{\omega} (\tfrac{\nabla_x^k \nabla_{\omega}^l f_R^\eps}{M_0})}_\M \\\no
				& + \sum_{k = k_1 + k_2} \nm{\nabla_x^{k_1} B(v_0)}_{L^\infty_x} \nm{\tfrac{\nabla_x^{k_2} \nabla_{\omega}^{l-1} f_R^\eps}{M_0}}_\M \nm{\nabla_{\omega} (\tfrac{\nabla_x^k \nabla_{\omega}^l f_R^\eps}{M_0})}_\M.
	\end{align}

We next turn to consider the right-hand side terms. Firstly we can derive the following commutation formula
	\begin{align*}
			[\nabla_x^k \nabla_{\omega}^l,\ \mathcal{L}_{\Omega_0}] f
	  = - \sum_{\scrpt{k = k_1 + k_2}{0 \le k_1 \le k}} 2\kappa \nabla_x^{k_1} \Omega_0 \nabla_x^{k_2} \nabla_{\omega}^{l-1} f
	  	- \sum_{\scrpt{k = k_1 + k_2}{1 \le k_1 \le k}} \nabla_{\omega} \cdot (\kappa \mathcal{P}_{\omega^\perp} \nabla_x^{k_1} \Omega_0 \nabla_x^{k_2} \nabla_{\omega}^l f),
	\end{align*}
which implies immediately that
	\begin{align}
		& \sskp{\tfrac{1}{\eps} \nabla_x^k \nabla_{\omega}^l \mathcal{L}_{\Omega_0} f_R^\eps}{\tfrac{\nabla_x^k \nabla_{\omega}^l f_R^\eps}{M_0}} \\\no
		= & \frac{1}{\eps} \sskp{\mathcal{L}_{\Omega_0} \nabla_x^k \nabla_{\omega}^l f_R^\eps}{\tfrac{\nabla_x^k \nabla_{\omega}^l f_R^\eps}{M_0}}
			 - \frac{1}{\eps} \sum_{\scrpt{k = k_1 + k_2}{0 \le k_1 \le k}} \sskp{2\kappa \nabla_x^{k_1} \Omega_0 \nabla_x^{k_2} \nabla_{\omega}^{l-1} f_R^\eps)}{\tfrac{\nabla_x^k \nabla_{\omega}^l f_R^\eps}{M_0}} \\\no
			& + \frac{1}{\eps} \sum_{\scrpt{k = k_1 + k_2}{1\le k_1 \le k}} \sskp{\kappa \mathcal{P}_{\omega^\perp} \nabla_x^{k_1} \Omega_0 \nabla_x^{k_2} \nabla_{\omega}^l f_R^\eps}{\nabla_{\omega} (\tfrac{\nabla_x^k \nabla_{\omega}^l f_R^\eps}{M_0})} \\\no
		\le & - \frac{1}{\eps} \nm{\nabla_{\omega} (\tfrac{\nabla_x^k \nabla_{\omega}^l f_R^\eps}{M_0})}_\M^2
					+ \frac{1}{\eps} C \sum_{\scrpt{k = k_1 + k_2}{0 \le k_1 \le k}} \nm{\nabla_x^{k_1} \Omega_0}_{L^\infty} \nm{\tfrac{\nabla_x^{k_2} \nabla_{\omega}^{l-1} f_R^\eps}{M_0}}_\M \nm{\tfrac{\nabla_x^k \nabla_{\omega}^l f_R^\eps}{M_0}}_\M \\\no
				& + \frac{1}{\eps} C \sum_{\scrpt{k= k_1 + k_2}{1\le k_1 \le k}} \nm{\nabla_x^{k_1} \Omega_0}_{L^\infty} \nm{\tfrac{\nabla_x^{k_2} \nabla_{\omega}^l f_R^\eps}{M_0}}_\M \nm{\nabla_{\omega} (\tfrac{\nabla_x^k \nabla_{\omega}^l f_R^\eps}{M_0})}_\M \\\no
		\le & - \frac{1}{\eps} \nm{\nabla_{\omega} (\tfrac{\nabla_x^k \nabla_{\omega}^l f_R^\eps}{M_0})}_\M^2
					+ \frac{1}{\eps} C \nm{\nabla_x^k \Omega_0}_{L^\infty} \sum_{\scrpt{0\le k_2 \le k}{l_1\le l-1, l_2 \le l-2}} \nm{\nabla_{\omega} (\tfrac{\nabla_x^{k_2} \nabla_{\omega}^{l_2} f_R^\eps}{M_0})}_\M \nm{\nabla_{\omega} (\tfrac{\nabla_x^k \nabla_{\omega}^{l_1} f_R^\eps}{M_0})}_\M \\\no
		    & + \frac{1}{\eps} C \nm{\nabla_x^k \Omega_0}_{L^\infty} \sum_{\scrpt{0\le k_2 \le k-1}{l_2 \le l-1}} \nm{\nabla_{\omega} (\tfrac{\nabla_x^{k_2} \nabla_{\omega}^{l_2} f_R^\eps}{M_0})}_\M \nm{\nabla_{\omega} (\tfrac{\nabla_x^k \nabla_{\omega}^l f_R^\eps}{M_0})}_\M,
	\end{align}
where in the last line we have used the weighted Poincar\'e inequality \eqref{eq:Poincare-inequ-xw} in Lemma \ref{lemm:Poincare-inequ}. And we also have
	\begin{align}
	  \sskp{\tfrac{1}{\sqrt\eps} \nabla_x^k \nabla_{\omega}^l h_0}{\tfrac{\nabla_x^k \nabla_{\omega}^l f_R^\eps}{M_0}}
	  \ls & \nm{\tfrac{\nabla_x^k \nabla_{\omega}^l h_0}{M_0}}_\M \cdot
	  		\tfrac{1}{\sqrt\eps} \nm{\tfrac{\nabla_x^k \nabla_{\omega}^l f_R^\eps}{M_0}}_\M \\\no
	  \ls & \nm{\tfrac{\nabla_x^k \nabla_{\omega}^l h_0}{M_0}}_\M \cdot
	  		\tfrac{1}{\sqrt\eps} \sum_{l'\le l-1} \nm{\nabla_{\omega} (\tfrac{\nabla_x^k \nabla_{\omega}^{l'} f_R^\eps}{M_0})}_\M.
	\end{align}	

We then need to treat the term of $$\nabla_x^k \nabla_{\omega}^l h_1 = \nabla_x^s \nabla_{\omega}^l \left\{ v_R^\eps \cdot \nabla_x (f_0 + \sqrt\eps f_R^\eps) + \nabla_\omega \cdot [\mathcal{P}_{\omega^\perp}B(v_R^\eps) \omega (f_0 + \sqrt\eps f_R^\eps)] \right\}.$$

Firstly performing some direct calculations yields that
	\begin{align}
	  	\sskp{\nabla_x^k \nabla_{\omega}^l (v_R^\eps \cdot \nabla_x f_0)}{\tfrac{\nabla_x^k \nabla_{\omega}^l f_R^\eps}{M_0}}
	  \ls \sum_{\scrpt{k = k_1 + k_2}{0\le k_1 \le k}} \nm{\nabla_x^{k_1} v_R^\eps}_{L^2_x}
		    								\nm{\tfrac{\nabla_x^{k_2 +1} \nabla_{\omega}^l f_0}{M_0}}_{L^\infty_x L^2_{\M}}
		    								\nm{\tfrac{\nabla_x^k \nabla_{\omega}^l f_R^\eps}{M_0}}_\M,
	\end{align}
and
	\begin{align}
			& \sskp{\nabla_x^k \nabla_{\omega}^l [\nabla_\omega \cdot (\mathcal{P}_{\omega^\perp}B(v_R^\eps) \omega f_0)]}{\tfrac{\nabla_x^k \nabla_{\omega}^l f_R^\eps}{M_0}} \\\no
		\ls & \sum_{\scrpt{k = k_1 + k_2}{0\le k_1 \le k}} \nm{\nabla_x^{k_1 +1} v_R^\eps}_{L^2_x}
		    								\left( \nm{\tfrac{\nabla_x^{k_2} \nabla_{\omega}^l f_0}{M_0}}_{L^\infty_x L^2_{\M}} + \nm{\tfrac{\nabla_x^{k_2} \nabla_{\omega}^{l-1} f_0}{M_0}}_{L^\infty_x L^2_{\M}} \right)
		    								\nm{\nabla_{\omega} (\tfrac{\nabla_x^k \nabla_{\omega}^l f_R^\eps}{M_0})}_\M.
	\end{align}

As for the last two terms of order $\sqrt\eps$, some more delicate arguments imply that,
	\begin{align}
		& \sskp{\nabla_x^k \nabla_{\omega}^l (v_R^\eps \cdot \nabla_x f_R^\eps)}{\tfrac{\nabla_x^k \nabla_{\omega}^l f_R^\eps}{M_0}} \\\no
		=\ & \sskp{v_R^\eps \cdot \nabla_x^{k+1} \nabla_{\omega}^l f_R^\eps}{\tfrac{\nabla_x^k \nabla_{\omega}^l f_R^\eps}{M_0}}
			+ \sum_{\scrpt{k = k_1 + k_2}{1\le k_1 \le k}} \sskp{\nabla_x^{k_1} v_R^\eps \cdot \nabla_x^{k_2+1} \nabla_{\omega}^l f_R^\eps}{\tfrac{\nabla_x^k \nabla_{\omega}^l f_R^\eps}{M_0}} \\\no
		\ls \ & \nm{\nabla_x \Omega_0}_{L^\infty_x} \nm{v_R^\eps}_{H^2_x} \nm{\tfrac{\nabla_x^k \nabla_{\omega}^l f_R^\eps}{M_0}}_\M^2
				+ \nm{\nabla_x^k v_R^\eps}_{H^2_x} \sum_{0 \le k_2 \le k-1}
					\nm{\tfrac{\nabla_x^{k_2 +1} \nabla_{\omega}^l f_R^\eps}{M_0}}_\M \nm{\tfrac{\nabla_x^k \nabla_{\omega}^l f_R^\eps}{M_0}}_\M,
	\end{align}	
where we have used the Sobolev embedding inequalities (Note that again $k+2 \le s+1$).

The last term can be controlled as follows,
	\begin{align}
	  	& \sskp{\nabla_x^k \nabla_{\omega}^l [\nabla_\omega \cdot (\mathcal{P}_{\omega^\perp}B(v_R^\eps) \omega f_R^\eps)]}{\tfrac{\nabla_x^k \nabla_{\omega}^l f_R^\eps}{M_0}} \\\no
		= & \sskp{B(v_R^\eps) \omega \nabla_x^k \nabla_{\omega}^l f_R^\eps}{\nabla_\omega (\tfrac{\nabla_x^k \nabla_{\omega}^l f_R^\eps}{M_0})}
				+ \sum_{\scrpt{k = k_1 + k_2}{1\le k_1 \le k}}
					\sskp{\nabla_x^{k_1} B(v_R^\eps) \omega \nabla_x^{k_2} \nabla_{\omega}^l f_R^\eps}{\nabla_\omega (\tfrac{\nabla_x^k \nabla_{\omega}^l f_R^\eps}{M_0})} \\\no
		  & + \sskp{B(v_R^\eps) \nabla_x^k \nabla_{\omega}^{l-1} f_R^\eps}{\nabla_\omega (\tfrac{\nabla_x^k \nabla_{\omega}^l f_R^\eps}{M_0})}
				+ \sum_{\scrpt{k = k_1 + k_2}{1\le k_1 \le k}}
					\sskp{\nabla_x^{k_1} B(v_R^\eps) \nabla_x^{k_2} \nabla_{\omega}^{l-1} f_R^\eps}{\nabla_\omega (\tfrac{\nabla_x^k \nabla_{\omega}^l f_R^\eps}{M_0})} \\\no
		\ls & \nm{\nabla_x v_R^\eps}_{L^\infty_x} \nm{\tfrac{\nabla_x^k \nabla_{\omega}^l f_R^\eps}{M_0}}_\M \nm{\nabla_\omega (\tfrac{\nabla_x^k \nabla_{\omega}^l f_R^\eps}{M_0})}_\M
				+ \nm{\nabla_x v_R^\eps}_{L^\infty_x} \nm{\tfrac{\nabla_x^k \nabla_{\omega}^{l-1} f_R^\eps}{M_0}}_\M \nm{\nabla_\omega (\tfrac{\nabla_x^k \nabla_{\omega}^l f_R^\eps}{M_0})}_\M\\\no
			  & + \sum_{\scrpt{k = k_1 + k_2}{1 \le k_1 \le k}} \nm{\nabla_x^{k_1} v_R^\eps}_{L^4_x}
				\nm{\nabla_x^{k_2} \nabla_{\omega}^l f_R^\eps}_{L^4_x L^2_{\omega}} \nm{\nabla_\omega (\tfrac{\nabla_x^s f_R^\eps}{M_0})}_\M \\\no
				& + \sum_{\scrpt{k = k_1 + k_2}{1 \le k_1 \le k}} \nm{\nabla_x^{k_1} v_R^\eps}_{L^4_x}
				\nm{\nabla_x^{k_2} \nabla_{\omega}^{l-1} f_R^\eps}_{L^4_x L^2_{\omega}} \nm{\nabla_\omega (\tfrac{\nabla_x^s f_R^\eps}{M_0})}_\M \\\no
		\ls & \nm{\nabla_x v_R^\eps}_{H^2_x}
						\left( \nm{\tfrac{\nabla_x^k \nabla_{\omega}^l f_R^\eps}{M_0}}_\M
									+ \nm{\tfrac{\nabla_x^k \nabla_{\omega}^{l-1} f_R^\eps}{M_0}}_\M \right)
					\nm{\nabla_\omega (\tfrac{\nabla_x^k \nabla_{\omega}^l f_R^\eps}{M_0})}_\M\\\no
			  & + \nm{\nabla_x^{k+1} v_R^\eps}_{L^2_x}
			  			\sum_{0 \le k_2 \le k-1}
			  			\left( \nm{\tfrac{\nabla_x^{k_2+1} \nabla_{\omega}^l f_R^\eps}{M_0}}_\M
			  						+ \nm{\tfrac{\nabla_x^{k_2+1} \nabla_{\omega}^{l-1} f_R^\eps}{M_0}}_\M \right)
			   		\nm{\nabla_\omega (\tfrac{\nabla_x^k \nabla_{\omega}^l f_R^\eps}{M_0})}_\M,
	\end{align}
where we have used the Sobolev embedding inequalities again.

At last, by choosing $C_0$ such that
	\begin{align*}
	  C_0
	  \ge & C \big( 1 +\nm{\p_t \Omega_0}_{L^\infty_x} + \nm{v_0}_{L^\infty_x} \nm{\nabla_x \Omega_0}_{L^\infty_x} + \nm{\nabla_x^s v_0}_{L^\infty_x} + \nm{\nabla_x^s \Omega_0}_{L^\infty_x} \\
	  	& + \nm{\nabla_x^s F_0}_{L^\infty_{x,\omega}} + \nm{\nabla_x^s B(v_0)}_{L^\infty_x}
	  		+ \nm{\frac{\nabla_{x,\omega}^s h_0}{M_0}}_\M + \nm{\frac{\nabla_{x,\omega}^s f_0}{M_0}}_{L^\infty_x L^2_\M} \big),
	\end{align*}
we thus obtain the estimates for mixed derivatives that
	\begin{align}\label{esm:derivatives-f-xw}
	  & \tfrac{1}{2} \tfrac{\d}{\d t} \nm{\tfrac{\nabla_x^k \nabla_{\omega}^l f_R^\eps}{M_0}}_\M^2 + \tfrac{1}{\eps} \nm{\nabla_\omega (\tfrac{\nabla_x^k \nabla_{\omega}^l f_R^\eps}{M_0})}_\M^2 \\\no
	\ls_{_{C_0}} & \nm{\tfrac{\nabla_x^k \nabla_{\omega}^l f_R^\eps}{M_0}}_\M^2
	  	+ \nm{\tfrac{\nabla_x^{k+1} \nabla_{\omega}^{l-1} f_R^\eps}{M_0}}_\M \nm{\tfrac{\nabla_x^k \nabla_{\omega}^l f_R^\eps}{M_0}}_\M
	  	+ \sum_{0 \le k_2 \le k-1} \nm{\tfrac{\nabla_x^{k_2 +1} \nabla_{\omega}^l f_R^\eps}{M_0}}_\M \nm{\tfrac{\nabla_x^k \nabla_{\omega}^l f_R^\eps}{M_0}}_\M
	  \\\no
	  &	+ \sum_{0 \le k_2 \le k} \left( \nm{\tfrac{\nabla_x^{k_2} \nabla_{\omega}^l f_R^\eps}{M_0}}_\M + \nm{\tfrac{\nabla_x^{k_2} \nabla_{\omega}^{l-1} f_R^\eps}{M_0}}_\M \right) \nm{\nabla_\omega (\tfrac{\nabla_x^k \nabla_{\omega}^l f_R^\eps}{M_0})}_\M
	  \\\no
	  & + \sqrt\eps \nm{v_R^\eps}_{H^2_x} \nm{\tfrac{\nabla_x^k \nabla_{\omega}^l f_R^\eps}{M_0}}_\M  \sum_{l'\le l-1} \nm{\nabla_\omega (\tfrac{\nabla_x^k \nabla_{\omega}^{l'} f_R^\eps}{M_0})}_\M
	  \\\no
	  & + \sqrt\eps \nm{\nabla_x^{s+1} v_R^\eps}_{L^2_x} \sum_{0 \le k_2 \le k-1}
					\nm{\tfrac{\nabla_x^{k_2 +1} \nabla_{\omega}^l f_R^\eps}{M_0}}_\M \nm{\tfrac{\nabla_x^k \nabla_{\omega}^l f_R^\eps}{M_0}}_\M
	  \\\no
	  & + \sqrt\eps \nm{\nabla_x v_R^\eps}_{H^2_x}
						\left( \nm{\tfrac{\nabla_x^k \nabla_{\omega}^l f_R^\eps}{M_0}}_\M
									+ \nm{\tfrac{\nabla_x^k \nabla_{\omega}^{l-1} f_R^\eps}{M_0}}_\M \right)
					\nm{\nabla_\omega (\tfrac{\nabla_x^k \nabla_{\omega}^l f_R^\eps}{M_0})}_\M
		\\\no
		& + \sqrt\eps \nm{\nabla_x^s v_R^\eps}_{L^2_x}
			  			\sum_{0 \le k_2 \le k-1}
			  			\left( \nm{\tfrac{\nabla_x^{k_2+1} \nabla_{\omega}^l f_R^\eps}{M_0}}_\M
			  						+ \nm{\tfrac{\nabla_x^{k_2+1} \nabla_{\omega}^{l-1} f_R^\eps}{M_0}}_\M \right)
			   		\nm{\nabla_\omega (\tfrac{\nabla_x^k \nabla_{\omega}^l f_R^\eps}{M_0})}_\M
	  \\\no
	  & + \frac{1}{\eps} \sum_{\scrpt{0\le k_2 \le k}{l_1\le l-1, l_2 \le l-2}} \nm{\nabla_{\omega} (\tfrac{\nabla_x^{k_2} \nabla_{\omega}^{l_2} f_R^\eps}{M_0})}_\M \nm{\nabla_{\omega} (\tfrac{\nabla_x^k \nabla_{\omega}^{l_1} f_R^\eps}{M_0})}_\M
	  \\\no
		& + \frac{1}{\eps} \sum_{\scrpt{0\le k_2 \le k-1}{l_2 \le l-1}} \nm{\nabla_{\omega} (\tfrac{\nabla_x^{k_2} \nabla_{\omega}^{l_2} f_R^\eps}{M_0})}_\M \nm{\nabla_{\omega} (\tfrac{\nabla_x^k \nabla_{\omega}^l f_R^\eps}{M_0})}_\M
		+ \tfrac{1}{\sqrt\eps} \sum_{l'\le l-1} \nm{\nabla_{\omega} (\tfrac{\nabla_x^k \nabla_{\omega}^{l'} f_R^\eps}{M_0})}_\M
	  \\\no
	  &	+ \nm{\nabla_x^k v_R^\eps}_{L^2_x} \nm{\tfrac{\nabla_x^k \nabla_{\omega}^l f_R^\eps}{M_0}}_\M
	  	+ \nm{\nabla_x^{k +1} v_R^\eps}_{L^2_x} \nm{\nabla_{\omega} (\tfrac{\nabla_x^k \nabla_{\omega}^l f_R^\eps}{M_0})}_\M.
 	\end{align}


\subsection{Closing the Uniform A Priori Estimates} 
\label{sub:2-4}



Define the energy functionals with respect to some small parameter $\eta_0$ satisfying $C_0 \eta_0^{1/2} \le 1/6$,
	\begin{align}
	  E_{\eta_0} = & \nm{v_R^\eps}_{H^s_x}^2 + \sum_{0\le k+l =m \le s} \eta_0^m \nm{\tfrac{\nabla_x^k \nabla_{\omega}^l f_R^\eps}{M_0}}_\M^2, \\\no
	  D_{\eta_0} = & \nm{\nabla_x v_R^\eps}_{H^s_x}^2 + \frac{1}{\eps} \sum_{0\le k+l =m \le s} \eta_0^m \nm{\nabla_{\omega} (\tfrac{\nabla_x^k \nabla_{\omega}^l f_R^\eps)}{M_0}}_\M^2.
	\end{align}
Note that $E_{\eta_0},\ D_{\eta_0}$ are equivalent to energy functionals $E,\ D$ defined in Lemma \ref{lemm:apriori-uniform}, respectively, when $\eta_0$ had been fixed.

Choose $\eps_0 \le \eta_0^{s+1}$, then for any $\eps < \eps_0$, it follows $\eps^{1/2} < \eta_0^{(s+1)/2}$. Thus the previous higher-order derivatives estimate \eqref{esm:derivatives-v} can be recast as
	\begin{align}\label{esm:derivatives-v-re}
		\tfrac{1}{2} \tfrac{\d}{\d t} \nm{\nabla_x^s v_R^\eps}_{L^2_x}^2 + \nm{\nabla_x^{s+1} v_R^\eps}_{L^2_x}^2
		\le & C_0 (E_{\eta_0} + \sqrt\eps E_{\eta_0} D_{\eta_0}^{1/2} + \eta_0^{1/2} D_{\eta_0}).
		\\\no
		\le & C_0 (E_{\eta_0} + 1) + C_0 (\eta_0^{1/2} + \eps E_{\eta_0}) D_{\eta_0},
	\end{align}
where we have used the H\"older inequality $\sqrt\eps E_{\eta_0} D_{\eta_0}^{1/2} \le \tfrac{1}{2} (E_{\eta_0} + \eps E_{\eta_0} D_{\eta_0})$.

By the basic fact
	\begin{align*}
		\eta_0^{s/2} \sum_{0 \le s_2 \le s-1} \nm{\nabla_{\omega} (\tfrac{\nabla_x^{s_2} f_R^\eps}{M_0})}_\M
		\le \eta_0^{1/2} \sum_{0 \le s_2 \le s-1} \eta_0^{s_2/2}\nm{\nabla_{\omega} (\tfrac{\nabla_x^{s_2} f_R^\eps}{M_0})}_\M,
	\end{align*}
we can rewrite the estimate on the pure spatial variables \eqref{esm:derivatives-f-x} as
	\begin{align}\label{esm:derivatives-f-x-re}
	  & \tfrac{1}{2} \tfrac{\d}{\d t} \nm{\tfrac{\nabla_x^s f_R^\eps}{M_0}}_\M^2 + \tfrac{1}{\eps} \nm{\nabla_\omega (\tfrac{\nabla_x^s f_R^\eps}{M_0})}_\M^2
	\\\no
	  \le & C_0 \Big\{ E_{\eta_0} + \sqrt\eps E_{\eta_0}^{1/2} D_{\eta_0}^{1/2}
	  		+ \sqrt\eps E_{\eta_0} D_{\eta_0}^{1/2}
	  		+ \eps E_{\eta_0}^{1/2} D_{\eta_0} + \eta_0^{1/2} D_{\eta_0}
	  		+ \eta_0^{s/2} D_{\eta_0}^{1/2} 
	  		+ \sqrt\eps \eta_0^{s/2} D_{\eta_0} \Big\}
	\\\no
	 	\le & C_0 (E_{\eta_0} + 1) + C_0 (\eps + \eta_0^{1/2} + \eps E_{\eta_0}) D_{\eta_0},
 	\end{align}
where we have used the H\"older inequality again.

Performing a similar argument yields that the estimates for mixed derivatives \eqref{esm:derivatives-f-xw} can be rewritten as
	\begin{align}\label{esm:derivatives-f-xw-re}
	  & \tfrac{1}{2} \tfrac{\d}{\d t} \nm{\tfrac{\nabla_x^k \nabla_{\omega}^l f_R^\eps}{M_0}}_\M^2 + \tfrac{1}{\eps} \nm{\nabla_\omega (\tfrac{\nabla_x^k \nabla_{\omega}^l f_R^\eps}{M_0})}_\M^2
	  \\\no
	\le & C_0 \Big\{ E_{\eta_0} + \sqrt\eps E_{\eta_0}^{1/2} D_{\eta_0}^{1/2}
			+ \sqrt\eps E_{\eta_0} D_{\eta_0}^{1/2} + \eps E_{\eta_0}^{1/2} D_{\eta_0}
			+ \eta_0 D_{\eta_0} + \eta_0^{(s+1)/2} D_{\eta_0}^{1/2}
			+ \sqrt\eps \eta_0^{s/2} D_{\eta_0} \Big\}
		\\\no
	\le & C_0 (E_{\eta_0} + 1) + C_0 (\eps + \eta_0 + \eps E_{\eta_0}) D_{\eta_0}.
 	\end{align}

Combining the above higher-order derivatives estimates \eqref{esm:derivatives-v-re}--\eqref{esm:derivatives-f-xw-re} together, we are led to the \emph{a priori} estimate uniformly-in-$\eps$, namely, 
	\begin{align}\label{esm:apriori-uniform-eta}
		\tfrac{1}{2} \tfrac{\d}{\d t} E_{\eta_0} + D_{\eta_0} \le C_0 (E_{\eta_0} + 1) + C_0 (\eps + \eta_0^{1/2} + \eps E_{\eta_0}) D_{\eta_0},
	\end{align}
which is equivalent to the formulation stated in \eqref{esm:apriori-uniform}, provided that the small parameter $\eta_0$ had been fixed.

At the end, recalling the definitions of $F_0$, $B(v_0)$ and $f_0$, we mention that $C_0$ can be chosen as some constant depending only on the values of $\|(\rho_0, \Omega_0, v_0)\|_{L^\infty(0,T;\ H^{s+4}_x)}$, and hence depending on the initial values of $\|(\rho_0^{in}, \Omega_0^{in}, v_0^{in})\|_{H^{s+4}_x}$. This completes the whole proof of Lemma \ref{lemm:apriori-uniform}. \qed


\section{Local Existence of Remainder Equation} 
\label{sec:local_existence_of_remainder_equation}



In this section we study the local existence of remainder equation \eqref{eq:remainder}, i.e.,
  \begin{align}
    \begin{cases}
      \p_t f + u_0 \nabla_x f + \nabla_{\omega}\cdot(\mathcal{F}_0 f)
      = \frac{1}{\eps}\mathcal{L}_{\Omega_0} f - \frac{1}{\sqrt\eps} h_0 - h_1 ,
    \\[3pt]
      Re (\p_t v + v_0 \cdot \nabla_x v + v \cdot \nabla_x v_0 + \sqrt\eps v \cdot \nabla_x v)
      + b \nabla_x\cdot Q_{f} = -\nabla_x p + \Delta_x v,
    \\[3pt]
      \nabla_x \cdot v =0,
    \end{cases}
  \end{align}
where we have dropped the sub- and superscripts $\eps,\ R$ for brevity. We firstly state the main result of this section.
	\begin{lemma}[Local Existence] \label{lemm:local-Re}
		There exists some $T_* > 0$, such that the Cauchy problem of the remainder equation \eqref{eq:remainder} admits a unique solution $(v, f)$ satisfying that
			\begin{align*}
				v(t,x) \in\ & L^\infty(0, T_*;\ H^s_x) \cap L^2(0, T_*;\ H^{s+1}_x), \\
				f(t,x,\omega) \in\ & L^\infty(0, T_*;\ H^s_{x,\omega}) \cap L^2(0, T_*;\ H^{s+1}_{x,\omega}).
			\end{align*}

	\end{lemma}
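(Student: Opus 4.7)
For this lemma, unlike the uniform-in-$\eps$ estimates of Lemma \ref{lemm:apriori-uniform}, the parameter $\eps > 0$ is fixed, so the plan is to treat $\tfrac{1}{\eps}\mathcal{L}_{\Omega_0}$ as a genuine (degenerate) dissipative operator in $\omega$ and $\tfrac{1}{\sqrt{\eps}} h_0$ as an inhomogeneous source of bounded $H^s_{x,\omega}$ norm (because $h_0$ is built out of the smooth background $(\rho_0,\Omega_0,v_0)$ supplied by Theorem \ref{Thm-WP-SOHNS-3D}). I would set up a linear iteration scheme: given $(v^{(n)}, f^{(n)})$, define $(v^{(n+1)}, f^{(n+1)})$ as the solution of
\begin{equation*}
\begin{cases}
\p_t f^{(n+1)} + u_0 \cdot \nabla_x f^{(n+1)} + \nabla_\omega \cdot (\mathcal{F}_0 f^{(n+1)}) = \tfrac{1}{\eps}\mathcal{L}_{\Omega_0} f^{(n+1)} - \tfrac{1}{\sqrt{\eps}} h_0 - h_1(v^{(n)}, f^{(n)}), \\[3pt]
Re\bigl(\p_t v^{(n+1)} + v_0 \cdot \nabla_x v^{(n+1)} + v^{(n+1)} \cdot \nabla_x v_0 + \sqrt{\eps}\, v^{(n)} \cdot \nabla_x v^{(n+1)}\bigr) + b \nabla_x \cdot Q_{f^{(n)}} = - \nabla_x p^{(n+1)} + \Delta_x v^{(n+1)}, \\[3pt]
\nabla_x \cdot v^{(n+1)} = 0,
\end{cases}
\end{equation*}
with the prescribed initial data and $(v^{(0)}, f^{(0)}) \equiv (v^{\eps,in}, f^{\eps,in})$. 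The $v^{(n+1)}$-equation is a linear Navier--Stokes system with drift coefficients depending on the given $v_0, v^{(n)}$, so standard Leray--projection / Galerkin theory gives existence in $L^\infty(0,T;H^s) \cap L^2(0,T;H^{s+1})$ for any $T>0$. The $f^{(n+1)}$-equation is a linear degenerate Fokker--Planck equation with smooth coefficients and a bounded source; existence in $L^\infty(0,T;H^s_{x,\omega})\cap L^2(0,T;H^s_x H^1_\omega)$ follows either by a Galerkin scheme in $\omega$ (spherical harmonics) combined with the $x$-transport part treated by characteristics, or by a vanishing $x$-viscosity regularization.

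Next I would derive uniform-in-$n$ a priori bounds on a short interval $[0, T_*]$, with $T_*$ depending on $\eps$ but not on $n$. Mimicking the derivations of \S\ref{sub:2-1}--\S\ref{sub:2-3}, but now {\em keeping} all the $\tfrac{1}{\eps}$ and $\tfrac{1}{\sqrt{\eps}}$ factors as constants, testing the $f^{(n+1)}$-equation against $\nabla_x^k\nabla_\omega^l f^{(n+1)}/M_0$ in the $M_0$-weighted inner product and the $v^{(n+1)}$-equation against $\nabla_x^s v^{(n+1)}$, one gets after summing in $k+l\le s$ an inequality of the schematic form
\begin{equation*}
\tfrac{1}{2}\tfrac{\d}{\d t} \mathcal{E}_{n+1}(t) + c(\eps)\, \mathcal{D}_{n+1}(t) \le C(\eps)\bigl(1 + \mathcal{E}_n(t) + \mathcal{E}_{n+1}(t)\bigr)\bigl(1 + \mathcal{E}_n(t) + \mathcal{E}_{n+1}(t)\bigr),
\end{equation*}
where $\mathcal{E}_n, \mathcal{D}_n$ are the analogues of the $E,D$ of Lemma \ref{lemm:apriori-uniform} for $(v^{(n)}, f^{(n)})$. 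A standard Gr\"onwall argument then produces $T_* = T_*(\eps, \mathcal{E}_0)>0$ and a constant $K$ such that $\sup_{n}\sup_{t\le T_*} \mathcal{E}_n(t) + \int_0^{T_*} \mathcal{D}_n \le K$.

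After that, I would show the iterates form a Cauchy sequence in the weaker norm $L^\infty(0,T_*; L^2_x) \times L^\infty(0,T_*; L^2_{x,\omega}(M_0^{-1}))$. Writing the difference system for $\delta f^{(n)} = f^{(n+1)} - f^{(n)}$, $\delta v^{(n)} = v^{(n+1)} - v^{(n)}$, testing against $\delta f^{(n)}/M_0$ and $\delta v^{(n)}$ respectively, the bilinear coupling terms are controlled by the uniform $H^s$-bound and by the Sobolev embedding $H^s\hookrightarrow L^\infty$ (since $s\ge 2$), yielding
\begin{equation*}
\tfrac{\d}{\d t}\bigl(\|\delta v^{(n)}\|_{L^2_x}^2 + \|\tfrac{\delta f^{(n)}}{M_0}\|_{M_0}^2\bigr) \le C(\eps,K)\bigl(\|\delta v^{(n-1)}\|_{L^2_x}^2 + \|\tfrac{\delta f^{(n-1)}}{M_0}\|_{M_0}^2\bigr),
\end{equation*}
so, by shrinking $T_*$ once more if necessary, we obtain contraction in the low-norm space and hence strong convergence. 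Combined with the uniform $H^s$-bound and the Aubin--Lions lemma, one passes to the limit in all terms of the equation to produce a solution in $L^\infty(0,T_*; H^s) \cap L^2(0,T_*; H^{s+1})$. Uniqueness is an immediate consequence of the same $L^2$-energy estimate applied to the difference of two putative solutions.

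The main technical obstacle will be the degeneracy of the dissipation in the $f$-equation: $\tfrac{1}{\eps}\mathcal{L}_{\Omega_0}$ provides control only on the $\omega$-gradient $\nabla_\omega(f/M_{\Omega_0})$, so propagating $H^s_{x,\omega}$ regularity requires exploiting the weighted Poincar\'e inequality of Lemma \ref{lemm:Poincare-inequ} at each mixed order $(k,l)$, exactly as in \S\ref{sub:2-3}, to recover the missing control on $\|\nabla_x^k\nabla_\omega^l f/M_0\|_M$ from the mixed dissipation and the lower-order terms. A secondary point is to verify that the constraint $\nabla_x\cdot v^{(n+1)} = 0$ (and hence $\nabla_x\cdot v=0$ and the orthogonality $\int f\,\d\omega = \int \omega f\,\d\omega = 0$ required by $f\in\Phi_0$) is propagated by the iteration; the divergence-free property is handled by the Leray projection, while the moment conditions follow by testing the $f$-equation against $1$ and $\omega$ and using that $h_0, h_1$ and $\mathcal{L}_{\Omega_0}$ all produce functions in $\Phi_0$.
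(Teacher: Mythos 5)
Your overall plan — a linear iteration scheme, a uniform high-norm bound, contraction in a low norm, and Aubin--Lions passage to the limit — is exactly the paper's strategy. The gap is in the iteration scheme itself: you freeze the entire source $h_1(v^{(n)}, f^{(n)})$ in the equation for $f^{(n+1)}$, so the term $\sqrt\eps\,v^{(n)}\cdot\nabla_x f^{(n)}$ appears as a known inhomogeneity at the wrong index. After applying $\nabla_x^s$ and testing against $\nabla_x^s f^{(n+1)}/M_0$, the top-order contribution $\sskp{\sqrt\eps\,v^{(n)}\cdot\nabla_x^{s+1} f^{(n)}}{\nabla_x^s f^{(n+1)}/M_0}$ cannot be integrated by parts, since the iteration indices $n$ and $n+1$ differ; you would need $\nabla_x^{s+1} f^{(n)}\in L^2_{x,\omega}$, but $\tfrac{1}{\eps}\mathcal{L}_{\Omega_0}$ only dissipates in $\omega$ and the uniform $H^s_{x,\omega}$ bound never supplies $\nabla_x^{s+1}$ control (unlike the $v$-equation, there is no $\Delta_x$ to gain one $x$-derivative in $L^2_t$). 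The same derivative loss reappears in your contraction step: $h_1(v^{(n)},f^{(n)})-h_1(v^{(n-1)},f^{(n-1)})$ contains $\sqrt\eps\,v^{(n-1)}\cdot\nabla_x\delta f^{(n-1)}$, which in the $L^2$-estimate for $\delta f^{(n)}$ asks for one $x$-derivative on $\delta f^{(n-1)}$ beyond what the $L^2$-contraction norm controls.

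The paper's scheme \ref{eq:iteration} avoids this by keeping the unknown $f_n$ inside $h_1$: it solves $\p_t f_n + u_0\cdot\nabla_x f_n + \nabla_\omega\cdot(\mathcal{F}_0 f_n) = \tfrac{1}{\eps}\mathcal{L}_{\Omega_0} f_n - \tfrac{1}{\sqrt\eps}h_0 - h_1(v_n,f_n)$ with $v_n$ given and $f_n$ the unknown, so that $\sqrt\eps\,v_n\cdot\nabla_x f_n$ and $\sqrt\eps\,\nabla_\omega\cdot[\mathcal{P}_{\omega^\perp}B(v_n)\omega f_n]$ become transport terms in the \emph{same} unknown. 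Their top-order $H^s$-energy contribution vanishes (or drops order) after integration by parts using $\nabla_x\cdot v_n = 0$ — exactly the mechanism used in \S\ref{sub:2-2} — and only the truly external source built from the smooth background $f_0 = \rho_0 M_{\Omega_0}$ survives; the velocity equation is then solved for $v_{n+1}$ with $v_n, f_n$ as data. The fix for your scheme is therefore to replace $h_1(v^{(n)}, f^{(n)})$ by $h_1(v^{(n)}, f^{(n+1)})$, or equivalently reindex as in the paper; with this change, the remainder of your outline (uniform $H^s$ bound, $L^2$-contraction, compactness limit, and propagation of the divergence-free and moment constraints) goes through.
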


The proof proceeds following an iteration scheme. Note that the first equation is linear with respect to the distribution function $f$, so given a $v_n$, we can determine a $f_n$ by using the first equation; Next combining the given $v_n$ and the determined $f_n$, we can determine $v_{n+1}$ by solving the equation of velocity field. More precisely, the $(n+1)$-th step iteration scheme can be constructed as follows,
  \begin{align} \label{eq:iteration} \tag*{(Re)$_{n+1}$}
    \begin{cases}
      \p_t f_n + u_0 \nabla_x f_n + \nabla_{\omega}\cdot(\mathcal{F}_0 f_n)
      = \frac{1}{\eps}\mathcal{L}_{\Omega_0} f_n - \frac{1}{\sqrt\eps} h_0 - h_1(v_n, f_n),
    \\[3pt]
      \p_t v_{n+1} + v_0 \cdot \nabla_x v_{n+1} + v_{n+1} \cdot \nabla_x v_0 + \sqrt\eps v_n \cdot \nabla_x v_{n+1} \\[3pt]
      \hspace*{5cm} + \nabla_x\cdot Q_{f_n} = -\nabla_x p_{n+1} + \Delta_x v_{n+1},
    \\[3pt]
      \nabla_x \cdot v_{n+1} =0,
    \end{cases}
  \end{align}
where $h_1(v_n, f_n) = v_n \cdot \nabla_x (f_0 + \sqrt\eps f_n) + \nabla_\omega \cdot [\mathcal{P}_{\omega^\perp}B(v_n) \omega (f_0 + \sqrt\eps f_n)]$. The initial data are given by $v_{n+1}^{in} = v_R^{\eps, in},\ f_{n+1}^{in} = f_R^{\eps, in}$, and the iteration begins with a given $(v_R^\eps)_0$.

Our task is to prove that the sequence $\{v_n\}_{n=0}^{\infty}$ is a Cauchy sequence, then the limit point $v$ will be the solution we seek, which also yields a unique distribution function $f$, correspondingly. For that, we define the solution set $\mathcal{S}_T(A)$ by
	\begin{align}
		\mathcal{S}_T(A)
		= \begin{Bmatrix}
				v \in L^\infty(0, T;\ H^s_x) \cap L^2(0, T;\ H^{s+1}_x) \\[3pt]
				\|v\|_{L^\infty(0, T;\ H^s_x)}^2 + \|v\|_{L^2(0, T;\ H^{s+1}_x)}^2 \le A
			\end{Bmatrix},
	\end{align}
where the constant $A$ will be determined later.

\subsection{Uniform bound in a large norm} 
\label{sub:uniform_bound_in_large_norm}


We will prove the uniform-in-$n$ boundedness of $v_n$, namely, assume $v_n \in \mathcal{S}_{T_*}(A)$, it requires to prove $v_{n+1} \in \mathcal{S}_{T_*}(A)$ as well.

Introduce the following notations:
	\begin{align}
		& E_{v,n} = \|v_n\|_{H^s_x}^2, \quad D_{v,n} = \|\nabla_x v_n\|_{H^s_x}^2, \\
		E_{f,n} = \sum_{0\le k+l =m \le s} & \eta_0^m \nm{\tfrac{\nabla_x^k \nabla_{\omega}^l f_n}{M_0}}_\M^2, \quad
	  D_{f,n} = \frac{1}{\eps} \sum_{0\le k+l =m \le s} \eta_0^m \nm{\nabla_{\omega} (\tfrac{\nabla_x^k \nabla_{\omega}^l f_n}{M_0})}_\M^2,
	\end{align}	
and
	\begin{align}
	  \mathcal{E}_{v,n}(t) = \sup_{s \in (0,t]} E_{v,n}(s) + \int_0^t D_{v,n}(s) \d s, \quad
	  \mathcal{E}_{f,n}(t) = \sup_{s \in (0,t]} E_{f,n}(s) + \int_0^t D_{f,n}(s) \d s.
	\end{align}

Performing an almost exactly the same process as that in the last section, we can infer that
	\begin{align} \label{esm:derivatives-f-x-n}
	  & \tfrac{1}{2} \tfrac{\d}{\d t} \nm{\tfrac{\nabla_x^s f_n}{M_0}}_\M^2 + \tfrac{1}{\eps} \nm{\nabla_\omega (\tfrac{\nabla_x^s f_n}{M_0})}_\M^2
	\\\no
	  \le & C_0 \Big\{ E_{f,n} + \sqrt\eps E_{f,n}^{1/2} D_{f,n}^{1/2}
	  		+ \eps E_{v,n}^{1/2} D_{f,n}^{1/2} E_{f,n}^{1/2}
	  		+ \sqrt\eps E_{f,n} D_{v,n}^{1/2}
	  		+ \eps E_{f,n}^{1/2} D_{f,n}^{1/2} D_{v,n}^{1/2}
	 \\\no
	  		& \qquad + \eta_0^{1/2} D_{f,n}
	  		+ \eta_0^{s/2} D_{f,n}^{1/2} + \eta_0^{s/2} E_{f,n}^{1/2} E_{v,n}^{1/2}
	  		+ \sqrt\eps \eta_0^{s/2} D_{f,n}^{1/2} D_{v,n}^{1/2} \Big\}
	\\\no
	 	\le & C_0 (1 + \eps E_{v,n} + \eta_0^{s/2} E_{v,n} + \eps D_{v,n}) E_{f,n}
	 			+ C_0 (\eta_0^{s/2} + \eta_0^s D_{v,n})
	 			+ C_0 (\eps + \eta_0^{1/2}) D_{f,n},
 	\end{align}
and
	\begin{align} \label{esm:derivatives-f-xw-n}
	  & \tfrac{1}{2} \tfrac{\d}{\d t} \nm{\tfrac{\nabla_x^k \nabla_{\omega}^l f_n}{M_0}}_\M^2 + \tfrac{1}{\eps} \nm{\nabla_\omega (\tfrac{\nabla_x^k \nabla_{\omega}^l f_n}{M_0})}_\M^2
	\\\no
	\le & C_0 \Big\{ E_{f,n} + \sqrt\eps E_{f,n}^{1/2} D_{f,n}^{1/2}
			+ \eps E_{v,n}^{1/2} E_{f,n}^{1/2} D_{f,n}^{1/2}
			+ \sqrt\eps E_{f,n} D_{v,n}^{1/2} + \eps E_{f,n}^{1/2} D_{f,n}^{1/2} D_{v,n}^{1/2}
		\\\no
			& \qquad + \eta_0 D_{f,n} + \eta_0^{(s+1)/2} D_{f,n}^{1/2}
			+ \eta_0^{s/2} E_{f,n}^{1/2} E_{v,n}^{1/2}
			+ \sqrt\eps \eta_0^{s/2} D_{f,n}^{1/2} D_{v,n}^{1/2} \Big\}
	\\\no
	\le & C_0 (1 + \eps E_{v,n} + \eta_0^{s/2} E_{v,n} + \eps D_{v,n}) E_{f,n}
	 			+ C_0 (\eta_0^{s/2} + \eta_0^s D_{v,n})
	 			+ C_0 (\eps^{1/2} + \eta_0^{1/2}) D_{f,n},
 	\end{align}
which together entail that
	\begin{align} \label{esm:derivatives-f-n}
			\tfrac{1}{2} \tfrac{\d}{\d t} E_{f,n} + D_{f,n} 	
		\le & C_0 (1 + \eps E_{v,n} + \eta_0^{s/2} E_{v,n} + \eps D_{v,n}) E_{f,n} \\\no
	 		& + C_0 (\eta_0^{s/2} + \eta_0^s D_{v,n})
	 			+ C_0 (\eps^{1/2} + \eta_0^{1/2}) D_{f,n}.
	\end{align}
Thus by recalling the assumption of iteration scheme $v_n \in \mathcal{S}_{T_*}(A)$, and choosing some small $\eta_0$ as before, we can infer from the Gr\"onwall inequality that
	\begin{align}
	  \mathcal{E}_{f,n}(t)
	  \le & \exp \left\{ 2C_0 \int_0^t [1 + \eps E_{v,n}(s) + \eta_0^{s/2} E_{v,n}(s) + \eps D_{v,n}(s)] \d s \right\} \\\no
	  & \times \left\{ E_{f,n}(0) + 2C_0 \eta_0^{s/2} \int_0^t (1 + \eta_0^{s/2} D_{v,n}(s)) \d s \right\}
	\\\no
	  \le & \exp \left\{ 2C_0 t [1 + (\eps + \eta_0^{1/2}) A] + 2C_0 \eps A \right\}
	  			\left\{ E_{f,n}(0) + 2C_0 \eta_0^{1/2} [t + \eta_0^{1/2} A] \right\}.
	\end{align}

Taking some $T_0 > 0$ satisfying
	\begin{align*}
	  T_0 \le \tfrac{\ln 2}{4C_0 [1 + A]},
	\end{align*}
then $\exp \{2C_0 T_0 [1 + (\eps + \eta_0^{1/2}) A] \} \le \tfrac{\ln 2}{2} $. As a consequence, some sufficiently small $\eps_0$ satisfying $ 2 C_0 A \eps_0 \le \tfrac{\ln 2}{2}$ yields for any $\eps < \eps_0$ and $ t < T_0$, that
	\begin{align}\label{esm:energy-f-n}
	  \mathcal{E}_{f,n}(t)
	  \le 2 \left\{ E_{f,n}(0) + \ln 2 \right\}.
	\end{align}

On the other hand, a similar argument as that in \eqref{esm:derivatives-v} gives that
	\begin{align}
		& \tfrac{1}{2} \tfrac{\d}{\d t} \nm{\nabla_x^s v_{n+1}}_{L^2_x}^2 + \nm{\nabla_x^{s+1} v_{n+1}}_{L^2_x}^2
	\\\no
		\ls_{_{C_0}} & \nm{\nabla_x^s v_{n+1}}_{L^2_x}^2
			 + \eps^{1/2} \nm{\nabla_x^s v_n}_{L^2_x} \nm{\nabla_x v_{n+1}}_{H^s_x} \nm{\nabla_x^s v_{n+1}}_{L^2_x}
			 + \nm{\nabla_x^{s+1} v_{n+1}}_{L^2_x} \nm{\nabla_{\omega}(\tfrac{\nabla_x^s f_n}{M_0})}_\M,
	\end{align}
which yields
	\begin{align}\label{esm:derivatives-v-n}
			& \tfrac{1}{2} \tfrac{\d}{\d t} E_{v,n+1} + D_{v,n+1} \\\no
		\le & C_0 \left\{ E_{v,n+1} + \eps^{1/2} E_{v,n}^{1/2} E_{v,n+1}^{1/2} D_{v,n+1}^{1/2}
					 + \eta_0^{1/2} D_{v,n+1}^{1/2} D_{f,n}^{1/2} \right\}  \\\no
		\le &	C_0 E_{v,n+1} (1 + \eps^{1/2} E_{v,n}) + C_0 \eta_0^{1/2} D_{f,n}
					+ C_0 (\eps^{1/2} + \eta_0^{1/2}) D_{v,n+1}.
	\end{align}	
Applying the Gr\"onwall inequality implies that
	\begin{align}
	  	\mathcal{E}_{v,n+1}(t)
	  \le & \exp(2C_0 \int_0^t (1 + \eps^{1/2} E_{v,n}(s) ) \d s) \left\{ E_{v,n+1}(0) + 2C_0 \eta_0^{1/2} \int_0^t D_{f,n}(s) \d s \right\}
	\\\no
		\le & 2 \left\{ E_{v,n+1}(0) + 2C_0 \eta_0^{1/2} \mathcal{E}_{f,n}(t) \right\} \\\no
		\le & 2 \left\{ E_{v,n+1}(0) + E_{f,n}(0) + \ln 2 \right\}.
	\end{align}

Therefore, choose
	\begin{align}
	  A \ge 2 \left( \|v_R^{\eps, in}\|^2_{H^s_x} + \|f_R^{\eps, in}\|^2_{H^s_{x,\omega}} + \ln 2 \right),  \quad
	  T_0 \le \tfrac{\ln 2}{4C_0 [1 + A]},
	\end{align}
then $\mathcal{E}_{v,n+1} \le A$, consequently we have $v_{n+1} \in \mathcal{S}_{T_*}(A)$ for $T_* \le T_0$.

\subsection{Contraction in a low norm} 
\label{sub:contraction_in_a_low_norm}



We are left to prove in this subsection that the contraction of the sequence $\{v_n\}_{n=0}^{\infty}$ in $L^2_x$ spaces. Setting $\phi_{n+1} = v_{n+1} - v_n$ and $\psi_n = f_n - f_{n-1}$, then equation \ref{eq:iteration} yields that
  \begin{align} \label{eq:itera-convg}
    \begin{cases}
      \p_t \psi_n + u_0 \nabla_x \psi_n + \nabla_{\omega}\cdot(\mathcal{F}_0 \psi_n)
      = \frac{1}{\eps}\mathcal{L}_{\Omega_0} \psi_n - h_1(\phi_n, f_n) \\[3pt]
      	\hspace*{4.2cm} - \sqrt\eps v_{n-1} \cdot \nabla_x \psi_n - \sqrt\eps \nabla_\omega \cdot (\mathcal{P}_{\omega^\perp}B(v_{n-1}) \omega \psi_n),
    \\[3pt]
      \p_t \phi_{n+1} + v_0 \cdot \nabla_x \phi_{n+1} + \phi_{n+1} \cdot \nabla_x v_0 + \sqrt\eps v_n \cdot \nabla_x \phi_{n+1} + \sqrt\eps \phi_n \cdot \nabla_x v_n \\[3pt]
      \hspace*{5cm} + \nabla_x\cdot Q_{\psi_n} = -\nabla_x (p_{n+1}-p_n) + \Delta_x \phi_{n+1},
    \\[3pt]
      \nabla_x \cdot \phi_{n+1} =0.
    \end{cases}
  \end{align}

Denote
	\begin{align}
		& E_{\phi,n} = \|\phi_n\|_{L^2_x}^2, \qquad D_{v,n} = \|\nabla_x \phi_n\|_{L^2_x}^2, \\
		& E_{\psi,n} = \nm{\tfrac{\psi_n}{M_0}}_\M^2, \quad
	  D_{f,n} = \frac{1}{\eps} \nm{\nabla_{\omega} (\tfrac{\psi_n}{M_0})}_\M^2,
	\end{align}	
and
	\begin{align}
	  \mathcal{E}_{n+1}(t) = \sup_{s \in (0,t]} [E_{\phi,n+1}(s) + E_{\psi,n}(s)] + \int_0^t [D_{\phi,n+1}(s) + D_{\psi,n}(s)] \d s.
	\end{align}

The similar argument as before will lead to
	\begin{align} \label{esm:f-convg}
			& \tfrac{1}{2} \tfrac{\d}{\d t} E_{\psi,n} + D_{\psi,n} \\\no
	  \le & C_0 \Big\{ E_{\psi,n} + \sqrt\eps E_{\psi,n}^{1/2} D_{\psi,n}^{1/2}
	  		+ E_{\phi,n}^{1/2} E_{\psi,n}^{1/2}
	  		+ \sqrt\eps D_{\phi,n}^{1/2} D_{\psi,n}^{1/2} \\\no
	  		& \qquad + \eps E_{f,n}^{1/2} D_{\phi,n}^{1/2} D_{\psi,n}^{1/2}
	  		+ \sqrt\eps E_{v,n-1}^{1/2} E_{\psi,n}
	  		+ \eps D_{v-1,n}^{1/2} E_{\psi,n}^{1/2} D_{\psi,n}^{1/2} \Big\},
	\end{align}
and
	\begin{align}\label{esm:v-convg}
			\tfrac{1}{2} \tfrac{\d}{\d t} E_{\phi,n+1} + D_{\phi,n+1}
		\le C_0 \left\{ E_{\phi,n+1} + \eps^{1/2} D_{v,n}^{1/2} E_{\phi,n}^{1/2} E_{\phi,n+1}^{1/2} + \eps^{1/2} D_{\phi,n+1}^{1/2} D_{\psi,n}^{1/2} \right\}.
	\end{align}	
Integrating the above two inequalities, and noticing the uniform bounds $\mathcal{E}_{v,n},\ \mathcal{E}_{f,n} \le A$, we can finally obtain, for any $t \in [0, T_*]$, that
	\begin{align}
	  \left\{ 1 - C_0 (T_* + \eps^{1/2} +  \eps^{1/2} A^{1/2}) \right\} \mathcal{E}_{n+1}(t)
	  \le C_0 (T_* + \eps^{1/2} +  \eps^{1/2} A^{1/2}) \mathcal{E}_n(t),
	\end{align}
where we have used the H\"older inequalities
	\begin{align*}
	  & \eps \int_0^t E_{f,n}^{1/2} D_{\phi,n}^{1/2} D_{\psi,n}^{1/2} \d s
	  	\ls \eps A^{1/2} (\mathcal{E}_{n+1}(t) + \mathcal{E}_n(t)),
	\\
	  & \eps \int_0^t D_{v-1,n}^{1/2} E_{\psi,n}^{1/2} D_{\psi,n}^{1/2} \d s
	  	\ls \eps \mathcal{E}_{n+1}^{1/2}(t) \left\{ \int_0^t D_{v-1,n}\d s \right\}^{1/2} \left\{ \int_0^t D_{\psi,n}\d s \right\}^{1/2}
	  \ls \eps A^{1/2} \mathcal{E}_{n+1}(t),
	\\
		& \eps^{1/2} \!\! \int_0^t \! D_{v,n}^{1/2} E_{\phi,n}^{1/2} E_{\phi,n+1}^{1/2} \d s
			\ls \eps^{1/2} \!\left\{ \int_0^t D_{v,n}^{1/2} \d s \right\} \mathcal{E}_{n+1}^{1/2} (t)
			\mathcal{E}_{n}^{1/2} (t)
			\ls \eps^{1/2} A^{1/2} t^{1/2} (\mathcal{E}_{n+1}(t) + \mathcal{E}_n(t)).
	\end{align*}
Recalling the same choose of $A,\ \eps_0$ and $T_*$ as before yields that
	\begin{align*}
		C_0 (T_* + \eps^{1/2} +  \eps^{1/2} A^{1/2}) \le \tfrac{\ln 2}{4(1+A)} + \tfrac{1}{6} + \tfrac{\sqrt{\ln 2}}{2C_0} \le \tfrac{1}{3},
	\end{align*}
which leads us to the contraction
	\begin{align}
		\mathcal{E}_{n+1}(t) \le \tfrac{1}{2} \mathcal{E}_n(t),
	\end{align}
Thus combining with a standard compactness argument, we can get a unique local solution of remainder equation \eqref{eq:remainder} as stated in Lemma \ref{lemm:local-Re}.

\section{Conclusions}

In this paper, we first prove the existence of the local classical solution of the macroscopic SOH-NS model, then use Hilbert expansion in term of $\sqrt{\eps}$ to justify the hydrodynamic limit from SOK-NS to SOH-NS. This provides a first analytically rigorous justification of the modeling and asymptotic analysis in \cite{DMVY-2017-arXiv}. Many analytic questions are left to be answered yet. The first one is about the global in time well-posedness of SOH-NS, at least near equilibrium. Obvious stationary solutions of SOH-NS model consists  of uniform (space independent) swimmer density and mean orientation fields as well as fluid velocity and pressure. However, linear stability analysis done in \cite{DMVY-2017-arXiv} illustrates that these states are mostly unstable to perturbations of an aligned state. This is also consistent with previous studies in \cite{SS-2008-PF}, \cite{SS-2008-PRL} for passive suspensions. This basically implies that the global in time existence around these equilibriums are of little hope. Instead, we should look for the stability around other states, say the isotropic states. This will be left for our future work.

\appendix

\section{The Stereographic Projection for the SOH-NS System}
\label{sec:SPT-3D}

In this section, we prove that by utilizing the stereographic projection transform to deal with the geometric constraint $|\Omega| = 1$, the SOH-NS system \eqref{SOH-NS} in $\R^3$ can be rewritten as the system \eqref{SOH-NS-3D-SPT-Smp}. The following chain rules of the derivatives are frequently used.

\begin{lemma}
  For $n=3$, we let $\widehat{\rho} = \ln \rho$, and $\Omega = \big{(} \tfrac{2 \phi}{W} , \tfrac{2 \psi}{W} , \tfrac{\phi^2 + \psi^2 - 1}{W} \big{)}$ be the stereographic projection transform, where $W = 1 + \phi^2 + \psi^2$ and $(\phi, \psi) \in \R^2$. Then the SOH-NS system \eqref{SOH-NS} can be rewritten as the system \eqref{SOH-NS-3D-SPT-Smp}.
\end{lemma}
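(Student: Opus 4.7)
The plan is to reformulate each equation of \eqref{SOH-NS} in the new unknowns $(\widehat{\rho},\phi,\psi,v)$ by repeatedly applying the chain rule and exploiting the fact that the stereographic projection is a conformal parametrization of $\mathbb{S}^2$. The single nontrivial structural ingredient we need is the pair of identities
\[
  \Omega_\phi\cdot\Omega_\phi=\Omega_\psi\cdot\Omega_\psi=\tfrac{4}{W^2},\qquad \Omega_\phi\cdot\Omega_\psi=0,
\]
which are verified directly from the explicit expressions of $\Omega_\phi$ and $\Omega_\psi$. Consequently $\{\Omega_\phi,\Omega_\psi\}$ is an orthogonal basis of the tangent plane $T_\Omega\mathbb S^2$, and the orthonormal projector can be written as $P_{\Omega^\bot} a = \tfrac{W^2}{4}\bigl[(a\cdot\Omega_\phi)\Omega_\phi+(a\cdot\Omega_\psi)\Omega_\psi\bigr]$ for every $a\in\R^3$.

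First I would handle the continuity equation: dividing by $\rho>0$ and using $\widehat\rho=\ln\rho$, it becomes $\partial_t\widehat\rho + U\cdot\nabla_x\widehat\rho + \nabla_x\cdot U = 0$. Since $\nabla_x\cdot v=0$, only $ac_1\nabla_x\cdot\Omega$ survives, and the chain rule $\nabla_x\Omega=\Omega_\phi\otimes\nabla_x\phi+\Omega_\psi\otimes\nabla_x\psi$ yields the first equation of \eqref{SOH-NS-3D-SPT-Smp}. Next, for the $\Omega$-equation, I would expand $\partial_t\Omega=\Omega_\phi\partial_t\phi+\Omega_\psi\partial_t\psi$ and $(V\cdot\nabla_x)\Omega=\Omega_\phi(V\cdot\nabla_x\phi)+\Omega_\psi(V\cdot\nabla_x\psi)$, then take the scalar product of the whole equation with $\Omega_\phi$ (respectively $\Omega_\psi$). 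Because $\Omega_\phi\in T_\Omega\mathbb S^2$, the projector $P_{\Omega^\bot}$ acts trivially from the right, so these two scalar equations decouple cleanly and, after multiplying by $W^2/4$, give the $\phi$- and $\psi$-equations modulo the Laplacian term.

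The main computational step is to evaluate $\Delta_x(\rho\Omega)\cdot\Omega_\phi$. Writing $\Delta_x(\rho\Omega)=\Delta_x\rho\,\Omega+2\nabla_x\rho\cdot\nabla_x\Omega+\rho\Delta_x\Omega$ and expanding $\Delta_x\Omega=\Omega_\phi\Delta_x\phi+\Omega_\psi\Delta_x\psi+\Omega_{\phi\phi}|\nabla_x\phi|^2+2\Omega_{\phi\psi}\nabla_x\phi\cdot\nabla_x\psi+\Omega_{\psi\psi}|\nabla_x\psi|^2$, the orthogonality $\Omega\cdot\Omega_\phi=0$ eliminates the first term. The cross terms are then reduced by the second-derivative identities
\[
  \Omega_\phi\cdot\Omega_{\phi\phi}=\tfrac12\partial_\phi|\Omega_\phi|^2=-\tfrac{8\phi}{W^3},\quad
  \Omega_\phi\cdot\Omega_{\phi\psi}=\tfrac12\partial_\psi|\Omega_\phi|^2=-\tfrac{8\psi}{W^3},
\]
together with $\Omega_\phi\cdot\Omega_{\psi\psi}=-\Omega_{\phi\psi}\cdot\Omega_\psi=\tfrac{8\phi}{W^3}$ obtained by differentiating $\Omega_\phi\cdot\Omega_\psi=0$. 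Collecting terms and dividing by $\rho$ exactly reproduces the expression for $\mathcal H_\phi$ in \eqref{Terms-Def}; the $\psi$-equation follows by the symmetric argument dotted with $\Omega_\psi$. Finally, the Navier--Stokes equation is transformed by simply expanding $\nabla_x\cdot(\rho\mathcal Q(\Omega))=e^{\widehat\rho}[\nabla_x\widehat\rho\cdot\mathcal Q(\Omega)+\nabla_x\cdot\mathcal Q(\Omega)]$ and using $\nabla_x\cdot(\Omega\otimes\Omega)=(\nabla_x\cdot\Omega)\Omega+(\Omega\cdot\nabla_x)\Omega$, expressed through $\Omega_\phi,\Omega_\psi$ via the chain rule, which yields $\mathcal G(\widehat\rho,\phi,\psi)$.

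The only delicate point is bookkeeping in the $\Delta_x(\rho\Omega)\cdot\Omega_\phi$ expansion, where one must be careful not to confuse $\Omega_\phi\cdot\Omega_{\psi\psi}$ with $\Omega_\psi\cdot\Omega_{\phi\psi}$; the identities above, derived exclusively from conformality of the stereographic parametrization, are what guarantee that the projection onto the $\Omega_\phi$-direction produces the quasilinear parabolic form $-\gamma\Delta_x\phi+\text{lower order}$ with no singular metric coefficient. All remaining manipulations are routine chain-rule computations that can be deferred to the appendix.
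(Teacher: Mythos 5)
Your proof is correct and follows exactly the same route as the paper's Appendix~\ref{sec:SPT-3D}: establish the conformality relations $|\Omega_\phi|^2=|\Omega_\psi|^2=\tfrac{4}{W^2}$, $\Omega_\phi\cdot\Omega_\psi=0$, express $P_{\Omega^\bot}$ in the $\{\Omega_\phi,\Omega_\psi\}$ basis, then dot the $\Omega$-equation with $\Omega_\phi$ and $\Omega_\psi$ and expand $\Delta_x(\rho\Omega)$ by the chain rule. Your derivation of the second-derivative products by differentiating the metric identities is slightly cleaner than the paper's direct computation of $\Omega_{\phi\phi},\Omega_{\phi\psi},\Omega_{\psi\psi}$, and in fact it yields $\Omega_\phi\cdot\Omega_{\psi\psi}=\tfrac{8\phi}{W^3}$, which is the value consistent with the term $-\tfrac{2\gamma\phi}{W}|\nabla_x\psi|^2$ in $\mathcal H_\phi$ of~\eqref{Terms-Def}; the value $\tfrac{16\phi}{W^3}$ printed in~\eqref{Codnt-Rel-SPT} is a typographical error.
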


\begin{proof}
  By direct calculations,
  \begin{equation*}
    \begin{aligned}
      \Omega_{\phi \phi} = & \big{(} \tfrac{4 \phi ( \phi^2 - 3 \psi^2 - 3 )}{W^3}, \tfrac{ 4 \psi ( 3 \phi^2 - \psi^2 - 1 ) }{W^3} , - \tfrac{4 ( 3 \phi^2 - \psi^2 - 1 )}{W^3} \big{)} \,, \\
      \Omega_{\phi \psi} = & \big{(} \tfrac{4 \psi ( 3 \phi^3 - \psi^2 - 1 )}{W^3} , \tfrac{4 \phi ( 3 \psi^2 - \phi^2 - 1 )}{W^3} , - \tfrac{16 \phi \psi}{W^3} \big{)} \,, \\
      \Omega_{\psi \psi} = & \big{(} \tfrac{4 \phi ( 3 \psi^2 - \phi^2 - 1 )}{W^3} , \tfrac{4 \psi ( \psi^2 - 3 \phi^2 - 3  )}{W^3} , - \tfrac{4 ( 3 \psi^2 - \phi^2 - 1 )}{W^3} \big{)} \,.
    \end{aligned}
  \end{equation*}
  Thus we can derive the following relations:
  \begin{equation}\label{Codnt-Rel-SPT}
    \begin{aligned}
      & |\Omega_\phi| = |\Omega_\psi| = \tfrac{2}{W}\,, \ \Omega_\phi \cdot \Omega_\psi = \Omega_\phi \cdot \Omega = \Omega_\psi \cdot \Omega = 0 \,, \\
      & \Omega_\phi \cdot \Omega_{\phi \phi} = - \tfrac{8 \phi }{W^3} \,, \ \Omega_\phi \cdot \Omega_{\phi \psi} = - \tfrac{8 \psi}{W^3}\,, \ \Omega_\phi \cdot \Omega_{\psi \psi} = \tfrac{16 \phi}{W^3} \,, \\
      & \Omega_\psi \cdot \Omega_{\phi \phi} = \tfrac{8 \psi}{W^3}\,, \ \Omega_\psi \cdot \Omega_{\phi \psi} = - \tfrac{8 \phi}{W^3} \,, \ \Omega_\psi \cdot \Omega_{\psi \psi} = - \tfrac{8 \psi}{W^3} \,, \\
      & P_{\Omega^\bot} a = \tfrac{1}{4} W^2 ( \Omega_\phi \cdot a ) \Omega_\phi + \tfrac{1}{4} W^2 ( \Omega_\psi \cdot a ) \Omega_\psi
    \end{aligned}
  \end{equation}
  for all $a \in \R^3$. Based on the above relations \eqref{Codnt-Rel-SPT}, we now can derive the system \eqref{SOH-NS-3D-SPT-Smp}.

  {\em Step 1. Derivation of $\rho$-equation.} For the second term $\nabla_x \cdot ( \rho U )$, we derive that
  \begin{equation*}
    \begin{aligned}
      \nabla_x \cdot ( \rho U ) = & U \cdot \nabla_x \rho + \rho \nabla_x \cdot ( a c_1 \Omega + v ) \\
      = & \rho U \cdot \nabla_x \widehat{\rho} + a c_1 \rho ( \Omega_\phi \cdot \nabla_x \phi + \Omega_\psi \cdot \nabla_x \psi ) \,,
    \end{aligned}
  \end{equation*}
  where we make use of the facts $\nabla_x \cdot v = 0$ and $\widehat{\rho} = \ln \rho$. Then, the first equation of the system \eqref{SOH-NS-3D-SPT-Smp} is immediately derived from the first $\rho$-equation of the SOH-NS system \eqref{SOH-NS} and the above equality.

  {\em Step 2. Derivation of $\Omega$-equation.} For the terms in the left hand side of the second $\Omega$-equation, we calculate that by the last relation of \eqref{Codnt-Rel-SPT} and the chain rules of differentiation
  \begin{equation*}
    \begin{aligned}
      \rho ( \partial_t \Omega + V \cdot \nabla_x \Omega ) + \tfrac{a}{\kappa} P_{\Omega^\bot} \nabla_x \rho = & \rho ( \partial_t \phi + V \cdot \nabla_x \phi +  \tfrac{a}{4 \kappa} W^2 \Omega_\phi \cdot \nabla_x \widehat{\rho} ) \Omega_\phi \\
      & + \rho ( \partial_t \psi + V \cdot \nabla_x \psi + \tfrac{a}{4 \kappa} W^2 \Omega_\psi \cdot \nabla_x \widehat{\rho} ) \Omega_\psi \,,
    \end{aligned}
  \end{equation*}
  and
  \begin{equation*}
    \begin{aligned}
      \rho P_{\Omega^\bot}  ( \widetilde{\lambda} S(v) + A(v)) \Omega = & \tfrac{\rho}{4} W^2 [ ( \widetilde{\lambda} S(v) + A(v) ) : \Omega_\phi \otimes \Omega ] \Omega_\phi \\
      & + \tfrac{\rho}{4} W^2 [ ( \widetilde{\lambda} S(v) + A(v) ) : \Omega_\psi \otimes \Omega ] \Omega_\psi \,,
    \end{aligned}
  \end{equation*}
  and
  \begin{equation*}
    \gamma P_{\Omega^\bot} \Delta_x ( \rho \Omega ) = \tfrac{\gamma}{4} W^2 [ \Omega_\phi \cdot \Delta_x (\rho \Omega) ] \Omega_\phi + \tfrac{\gamma}{4} W^2 [ \Omega_\psi \cdot \Delta_x (\rho \Omega) ] \Omega_\psi \,.
  \end{equation*}
  For the term $\Delta_x (\rho \Omega)$,  we derive from the chain rules of differentiation that
  \begin{equation*}
    \begin{aligned}
      \Delta_x ( \rho \Omega ) = & \Delta_x \rho \Omega + 2 \nabla_x \rho \cdot \nabla_x \phi \Omega_\phi + 2 \nabla_x \rho \cdot \nabla_x \psi \Omega_\psi + \rho \Delta_x \phi \Omega_\phi \\
      & + \rho \Delta_x \psi \Omega_\psi + \rho |\nabla_x \phi|^2 \Omega_{\phi \phi} + 2 \rho \nabla_x \phi \cdot \nabla_x \psi \Omega_{\phi \psi} + \rho |\nabla_x \psi|^2 \Omega_{\psi \psi} \,.
    \end{aligned}
  \end{equation*}
  Thus it is easy to be implied that
  \begin{equation*}
    \begin{aligned}
      \rho ( \partial_t \phi + V \cdot \nabla_x \phi + \mathcal{H}_\phi ( \widehat{\rho}, \phi, \psi, v ) ) \Omega_\phi +  \rho ( \partial_t \psi + V \cdot \nabla_x \psi + \mathcal{H}_\psi ( \widehat{\rho}, \phi, \psi, v ) ) \Omega_\psi = 0\,.
    \end{aligned}
  \end{equation*}
  Then by the relation $ \Omega_\phi \cdot \Omega_\psi = 0 $, we derive from dot product with $\Omega_\phi$ and $\Omega_\psi$ in the above equation that the second $\phi$-equation and the third $\psi$-equation of the system \eqref{SOH-NS-3D-SPT-Smp} hold, respectively.

  {\em Step 3. Derivation of $v$-equation.} We merely need to compute the term  $ - b \nabla_x \cdot ( \rho \mathcal{Q} (\Omega) ) $. In fact, by the chain rules of differentiation, we directly calculate that
  \begin{equation*}
    \begin{aligned}
      - b \nabla_x \cdot ( \rho \mathcal{Q} (\Omega) ) = & - b e^{\widehat{\rho}} \Big{[} \nabla_x \widehat{\rho} + c_4 ( \Omega_\phi \cdot \nabla_x \phi + \Omega_\psi \cdot \nabla_x \psi ) \Omega \\
      & \qquad \qquad + c_4 \big{(} ( \Omega \cdot \nabla_x \phi ) \Omega_\phi + ( \Omega \cdot \nabla_x \psi ) \Omega_\psi \big{)} \Big{]} \\
      \equiv & \ - b e^{\widehat{\rho}} \mathcal{G} (\widehat{\rho}, \phi, \psi) \,.
    \end{aligned}
  \end{equation*}
  Consequently, we gain the forth $v$-equation of the system \eqref{SOH-NS-3D-SPT-Smp} and we finish the proof of this lemma.

\end{proof}





\end{document}